\newcommand{\qed}{$\;\;\;\Box$}
\newenvironment{proof}{\par\smallbreak{\sl Proof.~}}
{\unskip\nobreak\hfill \qed \par\medbreak}
\newcounter{claim}
\renewcommand{\theclaim}{\arabic{claim}}
\newenvironment{claim}{\refstepcounter{claim}%
\par\medskip\par\noindent{\bf Claim~\theclaim.}\rm}%
{\par\medskip\par}
\newcounter{cclaim}
\renewcommand{\thecclaim}{\arabic{cclaim}}
{\par\medskip\par}
\newenvironment{subproof}{\par\noindent{\it Proof of Claim.}}%
{$\Box$\par\smallbreak}
\newcommand{\hide}[1]{}
\newcommand{\DD}{{\cal D}}
\newcommand{\N}{{\mathbb N}}
\newcommand{\R}{{\mathbb R}}
\newcommand{\C}{{\mathbb C}}
\newcommand{\Z}{{\mathbb Z}}
\newcommand{\uu}{\mbox{\boldmath $u$\unboldmath}}
\newcommand{\vv}{\mbox{\boldmath $v$\unboldmath}}
\newcommand{\ww}{\mbox{\boldmath $w$\unboldmath}}
\newcommand{\A}{{\cal A}}
\newcommand{\B}{{\cal B}}
\newcommand{\PP}{{\cal P}}
\newcommand{\CC}{{\cal C}}
\newcommand{\J}{{\cal J}}
\newcommand{\F}{{\cal F}}
\newcommand{\W}{{\cal W}}
\newcommand{\RR}{{\cal R}}
\newcommand{\LL}{{\cal L}}
\newcommand{\K}{{\cal K}}
\newcommand{\KK}{{\cal K}}
\newcommand{\EE}{{\cal E}}
\newcommand{\UU}{{\cal U}}
\newcommand{\beq}{\begin{equation}}
\newcommand{\ee}{\end{equation}}
\renewcommand{\d}{\partial}
\newtheorem{thm}{Theorem}%[section]
\newtheorem{lemma}[thm]{Lemma}
\newtheorem{defn}[thm]{Definition}
\newtheorem{rem}[thm]{Remark}
\newcommand{\de}{\delta}
\newcommand{\eps}{\varepsilon}
\newcommand{\vphi}{\varphi}
\newcommand{\la}{\lambda}
\newcommand{\vp}{\varphi}
\newcommand{\om}{\omega}
\newcommand{\reff}[1]{(\ref{#1})}
\renewcommand{\Im}{\mathop{\mathrm{Im}}\nolimits}
\renewcommand{\Re}{\mathop{\mathrm{Re}}\nolimits}
\title{
Hopf Bifurcation for General 1D  Semilinear \\  Wave Equations
with Delay} 
\newcounter{thesame}
\author{
	Irina Kmit
	\thanks{Institute of Mathematics, Humboldt University of Berlin. On leave from the
		Institute for Applied Problems of Mechanics and Mathematics,
		Ukrainian National Academy of Sciences. 
		Tel.: 
	+49(030)2093-45380.
		{\small   E-mail:
			{\tt kmit@mathematik.hu-berlin.de}
	}}
	\ \ \ Lutz Recke \thanks{Institute of Mathematics, Humboldt University of Berlin.
		{\small   E-mail:
			{\tt recke@mathematik.hu-berlin.de}}
}}
\date{}
\begin{document}

\maketitle

\begin{abstract}
\noindent 
We consider boundary value problems for 1D autonomous damped and delayed
semilinear wave equations of the type
$$
\d^2_tu(t,x)- a(x,\la)^2\d_x^2u(t,x)= b(x,\la,u(t,x),u(t-\tau,x),\d_tu(t,x),\d_xu(t,x)), \; x \in (0,1)
$$
with smooth coefficient functions $a$ 
and $b$
such that $a(x,\la)>0$ and
$b(x,\la,0,0,0,0) = 0$ for all $x$ and $\la$.
We state conditions ensuring Hopf bifurcation, i.e.,
 existence, local uniqueness (up to time shifts), regularity (with respect to $t$ and $x$)
and smooth dependence (on $\tau$ and $\la$) of small non-stationary time-periodic
solutions, which bifurcate from the stationary solution $u=0$, and we
derive a formula which determines the bifurcation direction with respect to the bifurcation parameter~$\tau$.

To this end, we transform the wave equation into a system of partial  integral equations
by means of integration along characteristics, and then we apply
a Lyapunov-Schmidt procedure and a generalized implicit function
theorem
 to this system.
The main technical difficulties, which have to be managed, are typical for hyperbolic PDEs
(with or without delay):
small divisors and the ``loss of derivatives'' property.

We do not use any properties of the corresponding initial-boundary value problem.
In particular, our results are true also for negative delays $\tau$.

\end{abstract}

\emph{Key words:} dissipative wave equation, time-periodic solutions, 
Lyapunov-Schmidt procedure, Fredholmness,
implicit function theorem, 
%small divisors, 
loss of derivatives, bifurcation direction.

\emph{Mathematics Subject Classification:} 35B10, 35B32, 35L20, 35L71, 35R10

\section{Introduction}\label{sec:intr}
\renewcommand{\theequation}{{\thesection}.\arabic{equation}}
\setcounter{equation}{0}

\subsection{The problem}\label{sec:problem}
This paper concerns  1D autonomous damped and delayed semilinear wave equation of the general type 
\beq\label{eq:1.1}
\d_t^2u(t,x)- a(x,\la)^2\d^2_xu(t,x)=b(x,\la,u(t,x),u(t-\tau,x),\d_tu(t,x),\d_xu(t,x)),\;x\in(0,1),
\ee
with 
%(for example) 
one Dirichlet and one Neumann boundary condition
\beq\label{eq:1.2}
u(0,t) = \d_xu(t,1)=0.
\ee
It is supposed that $b(x,\la,0,0,0,0)=0$ for all $x$ and $\la$, i.e., that $u=0$ is a stationary solution to
\reff{eq:1.1}--\reff{eq:1.2}  for all $\tau$ and $\la$.

The goal is to describe Hopf bifurcation, i.e., existence and local uniqueness (up to time shifts)
of families (parametrized by $\tau$ and $\la$) of non-stationary time-periodic
solutions to \reff{eq:1.1}--\reff{eq:1.2}, which bifurcate from the stationary solution $u=0$.

Our main result, stated in Theorem \ref{thm:hopf} below, is quite similar to Hopf bifurcation theorems 
for delayed ODEs (see, e.g.,  \cite{Erneux}, \cite[Chapter 5.5]{Guo/Wu}, \cite[Chapter 11]{Hale/Lunel}, \cite{Moiola,Sieber}) 
and for delayed parabolic PDEs (see, e.g.,  \cite{Chen,Du,Faria,LiMa}, \cite[Chapter 6]{Wu}).
However, the analysis 
of Hopf bifurcation for hyperbolic PDEs is faced with considerable complications if compared to ODEs
or parabolic PDEs  (with or without delay).
In the present paper we provide an approach for
overcoming the following technical difficulties, which appear in dissipative hyperbolic PDEs
and  do not appear in  ODEs or parabolic PDEs:

First, the question, whether a nondegenerate time-periodic solution to a dissipative nonlinear
wave equation is locally unique (up to time shifts in the autonomous case) and whether it depends smoothly on the system
parameters, is much more delicate 
than for ODEs or parabolic PDEs 
(cf., e.g., \cite{Raugel,Raugel1}). One reason for that
is the so-called loss of derivatives for  hyperbolic PDEs.
To overcome this difficulty, we use a generalized implicit function theorem
\cite[Theorem 2.2]{KR4}, which is applicable to
abstract equations with a  loss of derivatives
property.
Remark that for smoothness of the data-to-solution map of hyperbolic PDEs it is necessary, in general,
that the equation depends smoothly not only on the data and on the unknown function $u$, but also on the space variable $x$
(and the time variable $t$ in the non-autonomous case).
This is completely different to what is known for parabolic PDEs (cf. \cite{GR}).

Second, analysis of time-periodic solutions to hyperbolic PDEs usually encounters a complication
known as the problem of small divisors \cite{Arn,iooss_s,vejvoda}. 
Since Hopf bifurcations can be expected only in the so-called non-resonant case,
where small divisors do not appear, we have to impose a condition (assumption \reff{Fred} below)
preventing  small divisors from coming up.
That condition has no counterparts 
in the case of ODEs or parabolic PDEs.

And third, linear autonomous hyperbolic PDEs with one space dimension differ essentially from those with more than one space dimension:
They satisfy the spectral mapping property (see \cite{Lopes} in $L^p$-spaces and, more important
for applications to nonlinear problems, \cite{Lichtner1} in $C$-spaces)
and they generate Riesz bases (see, e.g., \cite{Guo,Joly}), what is not the case,
in general, if the space dimension is larger than one (see the celebrated counter-example of M. Renardy in \cite{Renardy}).
Therefore the question of  Fredholmness of the corresponding differential operators in appropriate spaces
of time-periodic functions is highly difficult.

The main consequence (from the point of view of mathematical techniques) of the fact, that the space
dimension of  \reff{eq:1.1}, \reff{eq:1.2} is one, consists in the following: We can use integration along characteristics
in order to replace \reff{eq:1.1}, \reff{eq:1.2} by an nonlinear partial integral equation (see \cite{Appell1} for the notion
``partial integral equation'').
After that, we can apply
known Fredholmness properties to the linearized partial integral equation (\cite{KRsecond}, \cite[Corollary 4.11]{KR4})
and, hence, we can apply the Lyapunov-Schmidt reduction method to the nonlinear  partial integral equation.

\subsection{Main results}\label{sec:results}
Our goal is to investigate time-periodic solutions to \reff{eq:1.1}--\reff{eq:1.2}. In order to work in spaces
of functions with  fixed time period $2\pi$, we put the frequency parameter $\om$ explicitely
into the equation by  scaling the time variable $t$ and by introducing
a new unknown function $u$ 
as follows:
$$
u_{\rm new}(t,x):=u_{\rm old}\left(\frac{t}{\om},x\right).
$$
The problem  \reff{eq:1.1}--\reff{eq:1.2} for the new unknown function $u$ and the unknown frequency $\om$ reads
\beq
\label{problem}
\left.
\begin{array}{l}
\om^2\d_t^2u(t,x)- a(x,\la)^2\d_x^2u(t,x)=b(x,\la,u(t,x),u(t-\om\tau,x),\om\d_tu(t,x),\d_xu(t,x)),\\ 
u(t,0) = \d_xu(t,1)=0,\\
u(t+2\pi,x)=u(t,x).
\end{array}
\right\}
\ee
Throughout this paper we suppose (and we do not mention it further) that
$$
\begin{array}{l}
a:[0,1]\times \R \to \R \mbox{ and } b:[0,1]\times \R^5 \to \R \mbox{ are $C^\infty$-smooth,}\\
a(x,\la)>0  \mbox{ and } b(x,\la,0,0,0,0)=0 \mbox{ for all } x \in [0,1] \mbox{ and } \la \in \R.
\end{array}
$$

Assumptions $\bf(A1)$--$\bf(A3)$ below 
are standard for Hopf bifurcation. To formulate them,
we consider the following eigenvalue problem for the linearization of \reff{problem} in $u=0$, $\om=1$ and $\la=0$:
\beq\label{evp}
\left.
\begin{array}{l}
  \left(\mu^2-b^0_5(x)\mu-b^0_4(x)e^{-\mu\tau}-b^0_3(x)\right)u(x)=a_0(x)^2u''(x)+b^0_6(x)u'(x),\\
u(0) = u'(1)=0.
\end{array}
\right\}
\ee
Here $\mu \in \C$ and $u:[0,1] \to \C$ are eigenvalue and eigenfunction, respectively. The coefficients $a_0$ and $b^0_j$ in
\reff{evp} are defined by
\beq
\label{bdef}
a_0(x):=a(x,0),\;  b^0_j(x):=\d_jb(x,0,0,0,0,0) \mbox{ for } j=3,4,5,6,
\ee
where $\d_jb$ is the partial derivative of the function $b$ with respect to its $j$th variable.\\

Our first assumption states that for certain delay $\tau=\tau_0$ there exists a pair of pure imaginary geometrically simple
eigenvalues to \reff{evp} (without loss of generality we may assume that the pair is $\mu=\pm i$):

{\bf(A1)} There exists $\tau_0\in\R$ such that for $\mu=i$ and $\tau=\tau_0$ there exists exactly one (up
to linear dependence) solution  $u\ne 0$ to  (\ref{evp}).\\

The second assumption is the so-called nonresonance condition:

{\bf(A2)} If   $u\not=0$ is a solution to (\ref{evp})  with   $\mu=ik, k \in \Z$  and  $\tau=\tau_0$,  
then  $k=\pm 1$.\\

The third assumption is the so-called transversality 
condition with respect to change of parameter~$\tau$. It states that for all $\tau \approx \tau_0$
there exists exactly one eigenvalue $\mu=\hat{\mu}(\tau)\approx i$ to \reff{evp}
and that this eigenvalue crosses the imaginary axis transversally if $\tau$ crosses  $\tau_0$.
In order to formulate this more explicitly, we consider the adjoint  problem
to \reff{evp} with $\mu=i$ and  $\tau=\tau_0$:
\beq\label{ad}
\left.
\begin{array}{l}
  \left(-1+ib^0_5(x)-b^0_4(x)e^{i\tau_0}-b^0_3(x)\right)u(x)
  =(a_0(x)^2u(x))''-(b^0_6(x)u(x))',\\
u(0) = a_0(1)^2u'(1)+(2a_0(1)a_0'(1)-b_6^0(1))u(1)=0.
\end{array}
\right\}
\ee
Because of assumption {\bf(A1)} there exists exactly one (up
to linear dependence) solution  $u\ne 0$ to (\ref{ad}).
The transversality condition is the following:

{\bf(A3)} For any solution $u=u_0\ne 0$ to  \reff{evp}  with $\tau=\tau_0$ and  $\mu=i$ and
for any solution $u=u_*\ne 0$ to  \reff{ad} it holds
$$
\sigma:=\int_0^1\left(2i-b^0_5+\tau_0e^{-i\tau_0}b^0_4\right)u_0\overline{u_*}dx \ne 0,\quad
\rho:=\mbox{Im}\left(\frac{e^{-i\tau_0}}{\sigma}\displaystyle\int_0^1b^0_4u_0\overline{u_*}dx\right)\ne 0.
$$
Remark that $\mbox{Re}\,\hat{\mu}'(\tau_0)=\rho$,
and this real number does not depend on the choice of the eigenfunctions $u_0$ and $u_*$.
The complex number $\sigma$ depends on the choice of the eigenfunctions $u_0$ and $u_*$, but the fact,
if condition $\sigma\not= 0$ is satisfied or not, does not depend on this choice.

\begin{defn}\label{def:Cspaces}
(i) We denote by $C_{2\pi}(\R\times [0,1])$ the space of all continuous functions $u:[0,1] \times \R \to \R$ such that
$u(t+2\pi,x)=u(t,x)$ for all $t \in \R$ and $x \in [0,1]$, with the norm 
$$
\|u\|_\infty:=\max\{|u(t,x)|: t \in \R, \; x \in [0,1]\}.
$$

(ii) For $k \in \N$  we denote by $C^k_{2\pi}(\R\times [0,1])$ the space of all $C^k$-smooth 
$u\in C_{2\pi}(\R\times [0,1])$, with the norm 
$
\max\{\|\d^i_t\d^j_xu\|_\infty: 0 \le i+j\le k\}.
$
\end{defn}

Now we are prepared to formulate our Hopf bifurcation theorem.
\begin{thm}
\label{thm:hopf}
Suppose that conditions $\bf(A1)$--$\bf(A3)$ are fulfilled as well as
\beq
\label{Fred}
\int_0^1\frac{b^0_5(x)}{a_0(x)}dx \ne 0.
\ee
Let  $u=u_0 \ne 0$ be a  solution to  \reff{evp}  with $\tau=\tau_0$ and  $\mu=i$,
and let  $u=u_* \ne 0$ be a  solution to  \reff{ad}.
Then there exist $\eps_0>0$ and a $C^\infty$-map
$$
(\hat u,\hat\om,\hat\tau) : [0,\eps_0]\times [-\eps_0,\eps_0]\to C^2_{2\pi}(\R \times [0,1])\times \R^2
$$
such that the following is true:

(i) {\rm Existence:} For all $(\eps,\la)\in (0,\eps_0]\times [-\eps_0,\eps_0] $ the function $u=\eps\hat u(\eps,\la)$
is a non-stationary solution  to \reff{problem}
with  $\om=\hat \om(\eps,\la)$ and  $\tau=\hat \tau(\eps,\la)$.

(ii)  {\rm Asymptotic expansion:} It holds
\beq
\label{uas}
[\hat u(0,0)](t,x)=\Re u_0(x)\cos t - \Im u_0(x)\sin t \mbox{ for all }
t \in \R \mbox{ and }x \in [0,1],
\ee
$\hat\om(0,0)=1$, $\hat\tau(0,0)=\tau_0$
and
\beq
\label{bifdir}
\d_\eps\hat\om(0,\la)=\d_\eps\hat\tau(0,\la)=0 \mbox{ for all } \la \in [-\eps_0,\eps_0].
\ee 

(iii) {\rm Local uniqueness:} There exists $\de>0$ such that for all 
solutions $(u,\om,\tau,\la)$ to \reff{problem} with $u \ne 0$ and
$\|u\|_\infty+ |\om-1| + |\tau-\tau_0|+|\la|<\de$
there exist $\eps\in (0,\eps_0]$ and $\vphi\in\R$ such that 
$\om=\hat\om(\eps,\la)$, $\tau=\hat\tau(\eps,\la)$ and $u(x,t)=\eps[\hat u(\eps,\la)](x,t+\vphi)$ for all  $t\in\R$
and $x\in[0,1]$.

(iv) {\rm Regularity:} For all $\eps\in [0,\eps_0]$, $\la \in [-\eps_0,\eps_0]$
and $k \in \N$ it holds $\hat u(\eps,\la) \in C^k_{2\pi}(\R \times [0,1])$.

(v) {\rm Smooth dependence:} The map $(\eps,\la) \in  [0,\eps_0]\times [-\eps_0,\eps_0]
\mapsto \hat u(\eps,\la) \in  C^k_{2\pi}(\R \times [0,1])$ is $C^\infty$-smooth for any $k \in \N$.
\end{thm}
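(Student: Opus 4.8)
The plan is to follow the scheme announced in the introduction: transform \reff{problem} into a nonlinear partial integral equation by integration along characteristics, set up a Lyapunov--Schmidt reduction using the Fredholmness results of \cite{KRsecond,KR4}, solve the infinite-dimensional (range) equation by the generalized implicit function theorem \cite[Theorem 2.2]{KR4}, and finally analyze the scalar bifurcation equation.

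\textbf{Step 1 (Characteristic transformation).} Writing $\om^2\d_t^2u-a^2\d_x^2u=(\om\d_t-a\d_x)(\om\d_t+a\d_x)u+(\text{lower order})$, I would introduce the Riemann-type variables $v_\pm:=\om\d_tu\mp a\d_xu$ and rewrite \reff{problem} as a first-order hyperbolic system for $(u,v_+,v_-)$ with the boundary conditions \reff{eq:1.2} coupling $v_+$ and $v_-$ at $x=0$ and $x=1$. Integrating each transport equation along its characteristic and using the $2\pi$-periodicity in $t$ yields an equivalent system of partial integral equations, schematically $w=F(w,\om,\tau,\la)$ with $w=(u,v_+,v_-)$ in a product of spaces $C_{2\pi}(\R\times[0,1])$. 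The key structural point, already flagged in the introduction, is the loss-of-derivatives phenomenon: $F$ maps $C^k\times\R^3$ into $C^k$ but its derivatives only behave well between $C^{k+1}$ and $C^k$, which is exactly the setting of \cite[Theorem 2.2]{KR4}. The smooth $x$-dependence of $a,b$ is what makes $F$ smooth here.

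\textbf{Step 2 (Linearization, Fredholmness, and the kernel).} The linearization of $F$ at $w=0$, $\om=1$, $\tau=\tau_0$, $\la=0$ is the partial integral operator associated with the eigenvalue problem \reff{evp}. Condition \reff{Fred} is precisely the hypothesis under which \cite{KRsecond}, \cite[Corollary 4.11]{KR4} give Fredholmness of index zero of this linearized operator on the appropriate periodic $C$-spaces; the nonresonance assumption \textbf{(A2)} together with \textbf{(A1)} then forces the kernel to be exactly two-dimensional, spanned by $\cos t\,\Re u_0-\sin t\,\Im u_0$ and its time-shift $\sin t\,\Re u_0+\cos t\,\Im u_0$ (the $\mu=\pm i$ eigenspace), and the cokernel is two-dimensional with a basis built from the adjoint eigenfunction $u_*$ of \reff{ad}. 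I would make the time-shift symmetry explicit: the autonomous equation is equivariant under $t\mapsto t+\vphi$, so the kernel is a single group orbit; this is what produces the ``up to time shifts'' uniqueness.

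\textbf{Step 3 (Lyapunov--Schmidt and the range equation).} Split $w=\eps(\cos t\,\Re u_0-\sin t\,\Im u_0)+\eps w^\perp$ with $w^\perp$ in a complement of the kernel, and project $w=F(w,\om,\tau,\la)$ by the spectral projections. The projection onto the range is an equation for $(w^\perp,\om,\tau)$ as a function of $(\eps,\la)$; after dividing out the factor $\eps$ and using that the linearization restricted to the complement is an isomorphism (modulo the loss of derivatives), this equation has the form required by \cite[Theorem 2.2]{KR4}. Applying that theorem gives a $C^\infty$ family $(w^\perp,\om,\tau)=(\hat w^\perp(\eps,\la),\hat\om(\eps,\la),\hat\tau(\eps,\la))$ solving the range equation, with the asserted regularity $\hat u(\eps,\la)\in C^k_{2\pi}$ for every $k$ (bootstrapping in the partial integral equation, since a solution in $C^k$ with smooth data lies in $C^{k+1}$). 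The normalization \reff{uas} and $\hat\om(0,0)=1$, $\hat\tau(0,0)=\tau_0$ come out of the construction at $\eps=0$.

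\textbf{Step 4 (Bifurcation equation and direction).} What remains is the two scalar equations from projecting onto the cokernel; by the time-shift equivariance one of them is automatically satisfied after fixing a phase (choosing $w^\perp$ orthogonal to the shift direction), leaving a single real bifurcation equation $g(\eps,\om,\tau,\la)=0$ with $g(0,1,\tau_0,0)=0$. Substituting the solution of the range equation, $g$ becomes a function of $(\eps,\om,\tau,\la)$ whose $\om$- and $\tau$-derivatives at the bifurcation point are governed by $\sigma$ and $\rho$ from \textbf{(A3)}: $\sigma\neq0$ lets me solve for $\om$ in terms of the rest, and $\rho=\Re\hat\mu'(\tau_0)\neq0$ (the transversality) lets me then solve for $\tau$, yielding $\hat\om(\eps,\la),\hat\tau(\eps,\la)$ consistent with Step 3. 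The bifurcation-direction formula \reff{bifdir}, $\d_\eps\hat\om(0,\la)=\d_\eps\hat\tau(0,\la)=0$, follows by differentiating $g$ in $\eps$ at $\eps=0$ and checking that the $O(\eps)$ term vanishes: the quadratic part of $b$ contributes only to the second and zeroth time-harmonics, which are annihilated by the cokernel projection onto the first harmonic, so the first nonvanishing correction is at order $\eps^2$. Local uniqueness (iii) is the uniqueness clause of \cite[Theorem 2.2]{KR4} combined with the phase-fixing.

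\textbf{Main obstacle.} The genuine difficulty is Step 1--2 interface: verifying that the partial integral reformulation puts the problem exactly into the abstract framework of \cite[Theorem 2.2]{KR4} — i.e., identifying the right scale of spaces $C^k_{2\pi}$, checking the quantitative loss-of-derivatives estimates for $F$ and its differentials, and confirming that \reff{Fred} is the precise condition making the linearized partial integral operator Fredholm of index zero on periodic $C$-spaces (the small-divisor issue lurks here and is excluded exactly by \textbf{(A2)} plus \reff{Fred}). Once the abstract theorem applies, Steps 3--4 are, modulo careful bookkeeping of harmonics, the standard Hopf/Lyapunov--Schmidt computation.
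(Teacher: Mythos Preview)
Your overall architecture---characteristic variables, partial integral reformulation, Lyapunov--Schmidt splitting, generalized implicit function theorem for the infinite-dimensional part, then a finite-dimensional bifurcation equation---is exactly the route the paper takes. However, Step~4 contains a genuine counting error that would block the argument as written.

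You claim that time-shift equivariance makes one of the two cokernel equations automatic, ``leaving a single real bifurcation equation $g(\eps,\om,\tau,\la)=0$,'' and then propose to solve it first for $\om$ (using $\sigma\neq0$) and then for $\tau$ (using $\rho\neq0$). One real equation cannot determine two real unknowns. What $S^1$-equivariance actually gives is $F(e^{i\vphi}\xi,\om,\tau,\la)=e^{i\vphi}F(\xi,\om,\tau,\la)$ for the complex bifurcation function; restricting $\xi$ to real $\eps\ge0$ leaves one \emph{complex} equation, i.e.\ two real equations, which after dividing by $\eps$ become $H(\eps,\om,\tau,\la)=0$. The paper computes the $2\times2$ Jacobian $\partial(\Re H,\Im H)/\partial(\om,\tau)$ at $(0,1,\tau_0,0)$ explicitly: with the normalization $\sigma=1$ one finds $\partial_\om H=i$ and $\Re\,\partial_\tau H=\rho$, so the determinant is $-\rho\neq0$, and the \emph{classical} implicit function theorem yields $\hat\om,\hat\tau$ simultaneously. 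The condition $\sigma\neq0$ enters earlier, in proving that $\A^*v_*^1,\A^*v_*^2$ are linearly independent (so that the projection $P$ onto a complement of $\mbox{im}\,\LL$ exists), not as a separate solvability step for $\om$.

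Two smaller points where the paper's execution differs from your sketch. First, for \reff{bifdir} the paper does not argue via harmonics; applying the equivariance with $\vphi=\pi$ gives $F(-\eps,\cdot)=-F(\eps,\cdot)$, hence $H$ is even in $\eps$, hence $\hat\om,\hat\tau$ are even in $\eps$ for every $\la$ at once. Second, the paper does not carry $u$ as an independent unknown: it works with $v=(v_1,v_2)$ alone and recovers $u=J_\la v$. This matters for the Fredholmness proof (Lemma~\ref{Fredia}), which is carried out from scratch rather than quoted: the linearization has the form $I-\CC-\DD(\J+\KK)$ where the pointwise part $\KK$ has \emph{zero diagonal}, and this is exactly what makes $\DD\KK\DD$ and $\DD\KK\CC$ smoothing so that Nikolskii's criterion (Theorem~\ref{Niko}) applies. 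Condition~\reff{Fred} enters there to make $I-\CC(\om,\la)$ invertible uniformly in~$\om$; it is not directly a small-divisor condition, and the generalized implicit function theorem is applied only to the external equation (solving for $w$ as a function of $(u,\om,\tau,\la)$), not to a combined unknown $(w^\perp,\om,\tau)$.
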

\begin{rem}
  \label{param}
The parametrizations $u=\eps\hat{u}(\eps,\la)$, $\om=\hat{\om}(\eps,\la)$ and $\tau=\hat{\tau}(\eps,\la)$ depend on the choice of the
eigenfunctions $u_0$ and $u_*$, in general, while the sign of $\partial_\eps^2\hat{\tau}(0,0)$,
 determining the bifurcation direction, does not.
\end{rem}

 In descriptions of Hopf bifurcation phenomena one of the main questions is that of the so-called bifurcation direction,
 i.e. the question if the bifurcating time-periodic solutions exist for bifurcation parameters (close to the bifurcation point)
 such that the stationary solution is unstable (in this case the Hopf bifurcation is called supercritical) or not.
 For ODEs and parabolic PDEs
 (with or without delay) it is known that, under reasonable additional assumptions, in the supercritical case  the bifurcating
 time-periodic solutions are orbitally stable. For hyperbolic PDEs this relationship between bifurcation direction and stability
 is believed to be true also, but rigorous proofs are not available up to now. More exactly, it is expected that the bifurcating non-stationary time-periodic solutions,
which are described by Theorem \ref{thm:hopf}, are orbitally stable if for all eigenvalues $\mu\not=\pm i$ of \reff{evp} with $\tau=\tau_0$ it holds $\Re \mu<0$ and if
$$
\rho \d_\eps^2\hat{\tau}(0,0)>0.
$$

 Anyway, in Theorem \ref{thm:dir} below we present a formula which shows how to calculate the number $\partial_\eps^2\hat{\tau}(0,0)$ by
 means of the eigenfunctions $u_0$ and $u_*$ and and of the first three derivatives of the nonlinearity
 $b(x,0,\cdot,\cdot,\cdot,\cdot)$. It is known that those formulae may be quite complicated and not explicit
 (see, e.g., \cite[Section 3.3]{HWK}, \cite{Kayan}, \cite[Theorem I.12.2]{Ki}; \cite[Theorem 1.2(ii)]{KR3}, \cite{Li}).
Therefore, in order to keep the technicalities simple, in   Theorem \ref{thm:dir} below we consider only nonlinearities of the type
 \beq
 \label{rest}
 b(x,\la,u_1,u_2,u_3,u_4)=\sum_{j=1}^4\beta_j(x,\la,u_j) 
 \ee
 with $C^\infty$-functions $\beta_j:[0,1]\times\R^2 \to \R$ such that
 \beq
 \label{rest1}
 \beta_j(x,\la,0)= \partial^2_3\beta_j(x,0,0)=0 \mbox{ for all } j=1,2,3,4,\; x \in [0,1] \mbox{ and } \la \in \R.
 \ee
 Set
 $$
 \beta_j^0(x):=\partial_3^3\beta_j(x,0,0) \mbox{ for } j=1,2,3,4.
 $$

 Our result about the bifurcation direction reads as follows:
 \begin{thm}
\label{thm:dir}
Let the assumptions of Theorem \ref{thm:hopf} and the conditions
\reff{rest} and \reff{rest1} be fulfilled.
Then
$$
\partial_\eps^2\hat{\tau}(0,0)=\frac{3}{8\rho} \mbox{\rm Re}\left(\frac{1}{\sigma}\int_0^1\left(
    (\beta^0_1+\beta^0_2e^{-i\tau_0} +i\beta^0_3)|u_0|^2u_0+\beta^0_4
    |u'_0|^2u'_0\right)\overline{u_*}dx\right).
$$
\end{thm}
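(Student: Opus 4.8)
The plan is to run the Lyapunov--Schmidt reduction used in the proof of Theorem~\ref{thm:hopf} one order further in $\eps$, and then extract the coefficient of $\eps^3$ in the scalar bifurcation equation. First I would recall the setup: after integration along characteristics the problem \reff{problem} becomes an abstract equation $F(u,\om,\tau,\la)=0$ on suitable spaces of $2\pi$-periodic functions, with $F(0,1,\tau_0,0)=0$, and the linearization $L:=D_uF(0,1,\tau_0,0)$ has a two-dimensional kernel spanned by the real and imaginary parts of $u_0(x)e^{it}$ (by \textbf{(A1)}--\textbf{(A2)}). Writing $u=\eps(v_0+w)$ with $v_0(t,x):=\Re\big(u_0(x)e^{it}\big)$ the normalized kernel element and $w$ in a complement of $\ker L$, the time-shift invariance removes one of the two kernel directions, and the Lyapunov--Schmidt equations split into a (uniquely solvable, by the generalized implicit function theorem of \cite{KR4}) equation for $w=w(\eps,\la)$ with $w(0,\la)=0$, and a single scalar bifurcation equation
$$
\Phi(\eps,\om,\tau,\la)=0,\qquad \Phi(0,1,\tau_0,\la)=0,
$$
obtained by pairing the reduced equation against the adjoint eigenfunction $u_*$. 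The functions $\hat\om,\hat\tau$ are then defined implicitly by $\Phi=0$ together with the phase condition, and \reff{bifdir} already tells us $\d_\eps\hat\om(0,\la)=\d_\eps\hat\tau(0,\la)=0$, so $w(\eps,\la)=\eps^2 w_2(\la)+o(\eps^2)$ and the first nontrivial information sits at order $\eps^3$.

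Next I would Taylor-expand. Because of \reff{bifdir} the expansion of the reduced equation in powers of $\eps$ has vanishing $\eps^2$-term after projection onto the kernel (the quadratic terms in $b$, being products of first-harmonic functions, produce only zeroth and second harmonics, which are orthogonal to $v_0$), and $w_2(\la)$ is precisely the particular solution of $Lw_2=(\text{quadratic part of }b\text{ evaluated at }v_0)$ living in those harmonics. At order $\eps^3$ one collects three contributions: the genuinely cubic terms of $b$ applied to $v_0$; the cross terms between the quadratic part of $b$ and $w_2$; and the terms coming from $\d_\tau L$ and $\d_\om L$ multiplied by $\d_\eps^2\hat\tau(0,0)$ and $\d_\eps^2\hat\om(0,0)$ respectively. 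Pairing with $u_*$ and taking the component along $e^{it}$ kills the first two mixed-harmonic pieces except for their first-harmonic part; the standard trigonometric identity $\cos^3 t=\tfrac34\cos t+\tfrac14\cos3t$ is what produces the factor $\tfrac34$, and halving for the real-part normalization gives the $\tfrac38$ in the stated formula. Under the restriction \reff{rest}--\reff{rest1} the quadratic part of $b$ vanishes identically (since $\d_3^2\beta_j(x,0,0)=0$), so the $w_2$-contribution disappears entirely and only the cubic terms $\beta_j^0$ and the $\d_\tau L$, $\d_\om L$ terms survive --- this is exactly the simplification that makes the clean closed form possible.

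Finally I would identify the coefficients. The $\d_\om L$ term contributes a multiple of $\int_0^1(2i-b^0_5+\tau_0e^{-i\tau_0}b^0_4)u_0\overline{u_*}\,dx=\sigma$ times $\d_\eps^2\hat\om(0,0)$, but \reff{bifdir}/Remark~\ref{param}-type normalization (and the defining role of the phase/amplitude conditions) pins the $\om$-dependence so that, after taking real and imaginary parts, the imaginary part of the $\eps^3$-equation yields $\d_\eps^2\hat\om(0,0)$ while the real part yields $\rho\,\d_\eps^2\hat\tau(0,0)$ equated to $\tfrac38\,\Re\big(\tfrac1\sigma\int_0^1((\beta_1^0+\beta_2^0e^{-i\tau_0}+i\beta_3^0)|u_0|^2u_0+\beta_4^0|u_0'|^2u_0')\overline{u_*}\,dx\big)$; here the factor $e^{-i\tau_0}$ on $\beta_2^0$ comes from the delay shift $u(t-\om\tau,\cdot)$ acting on $e^{it}$, the factor $i$ on $\beta_3^0$ from $\om\d_t$ acting on $e^{it}$, and the appearance of $u_0'$ in the $\beta_4^0$-term from the $\d_x u$ slot. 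Dividing by $\rho$ (nonzero by \textbf{(A3)}) gives the claimed identity. The main obstacle I anticipate is purely bookkeeping: carefully tracking how the time-shift (phase) normalization, the amplitude normalization $\langle u-\eps v_0,\text{something}\rangle=0$, and the projection onto $\ker L$ interact, so that the $\d_\om L$ and $\d_\tau L$ contributions separate cleanly into imaginary and real parts and no stray term from $w_2$ or from the normalization leaks into the final real-part equation --- once the bilinear form $\langle L\,\cdot\,,u_*\rangle$ and its derivatives in $\om$ and $\tau$ are written out explicitly on the $e^{it}$-harmonic, the rest is the trigonometric reduction described above.
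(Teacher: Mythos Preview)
Your proposal is correct and follows essentially the same route as the paper. The paper differentiates the scalar bifurcation function $H(\eps,\hat\om(\eps,0),\hat\tau(\eps,0),0)\equiv 0$ twice in $\eps$, uses that $\partial_\om H(0,1,\tau_0,0)=i$ is purely imaginary so that taking the real part isolates $\rho\,\partial_\eps^2\hat\tau(0,0)$, then observes that \reff{rest1} forces $\partial_v^2\B(0,1,\tau_0,0)=0$ (your ``$w_2$-contribution disappears'' remark), and finally expands $\partial_v^3 B(0,1,\tau_0,0)(v_0^1,v_0^1,v_0^1)$ in harmonics exactly as you describe, extracting the $e^{it}$-coefficient against $\overline{u_*}$; the only cosmetic difference is that the paper carries out the computation in the transformed first-order variables $v=(v_1,v_2)$ rather than directly in $u$, which does not affect the argument.
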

\begin{rem}
We do not know if generalizations of Theorems \ref{thm:hopf} and \ref{thm:dir}  to higher space dimensions and/or to
quasilinear equations exist and how they should look like.

Also, we do not know much about corresponding to \reff{eq:1.1} initial-boundary value problems.
Especially, we do not know if the bifurcation direction implies stability properties of the
bifurcating time-periodic solutions (as it is the case for ODEs or parabolic PDEs).
\end{rem}

Our paper is organized as follows:

In Subsection  \ref{related} we comment about some publications which are related to ours.

In Section \ref{sec:FOS} we show that any solution  to  \reff{problem} creates a solution to a semilinear first-order $2 \times 2$
hyperbolic system, namely \reff{FOS}, and vice versa.
In Section \ref{sec:Inteq} we show (by using the method of integration along characteristics)
that any solution to  the first-order 
hyperbolic system \reff{FOS} solves a system of partial integral equations, namely 
\reff{IE}, and vice versa. Remark that in Sections 
 \ref{sec:FOS} and \ref{sec:Inteq} we do pure transformations, i.e., problem \reff{problem} is equivalent to problem \reff{IE}.
Especially, the technical difficulties of \reff{problem}, like small divisors and loss of smoothness, are hidden in \reff{IE} also.
But it turns out that in \reff{IE} they can be handled more easily than in \reff{problem}.

In Sections \ref{Lyapunov-Schmidt Procedure} and \ref{sec:external}
we do a Lyapunov-Schmidt procedure in order to reduce locally
the system  \reff{IE} with infinite-dimensional state parameter 
to a problem with two-dimensional state parameter.
Here the main technical results are  Lemma \ref{Fredia} about Fredholmness of the linearization of \reff{IE}
and Lemma  \ref{4.10}
about local unique solvability and smooth dependence of the infinite dimensional part 
of the Lyapunov-Schmidt system.
The proofs of those lemmas are  much more complicated than the corresponding proofs for ODEs or parabolic PDEs 
(with or without delay). 

In particular, in the proof of Lemma \ref{Fredia} (more exactly in the proof of Claim 4 there)
we use assumption \reff{Fred}, and it turns out that the conclusions of Lemma \ref{Fredia} (and of Theorem \ref{thm:hopf} as well)
are not true, in general, if \reff{Fred} is not true. 

In the proof of Lemma \ref{4.10} we use a
generalized implicit function theorem, which is a particular case of \cite[Theorem 2.2]{KR4} and
concerns abstract parameter-dependent equations with a loss of smoothness property.
This generalized implicit function theorem  is presented in Subsection~\ref{appendix}.

In Section \ref{The bifurcation equation}
we put the solution of the  infinite dimensional part
of the Lyapunov-Schmidt system into the  finite dimensional part and discuss the behavior of the resulting
equation. This is completely analogous to what is known from Hopf bifurcation for ODEs and parabolic PDEs.

In Section \ref{sec:dir} we prove Theorem \ref{thm:dir} and give an example.

Finally, in Section \ref{other} we discuss cases of other than \reff{eq:1.2} boundary conditions.

\subsection{Remarks on related work}
\label{related}
The main methods for proving Hopf bifurcation theorems are, roughly speaking, center manifold reduction 
and Lyapunov-Schmidt reduction.
In order to apply them to evolution equations, one needs to have a smooth center manifold for 
the corresponding 
semiflow (for center manifold reduction) or a Fredholm property  of the linearized equation on 
spaces of periodic functions (for Lyapunov-Schmidt reduction). 

In  \cite{CrandRab,Ki} Hopf bifurcation theorems for abstract evolution equations are proved by means of
 Lyapunov-Schmidt reduction, and in \cite{Haragus,Ma,Vander} by means of center manifold reduction.
In  \cite{CrandRab,Ki} it is assumed that the operator of the linearized equation is sectorial
(see \cite[Hypothesis (HL)]{CrandRab} and \cite[Hypothesis I.8.8]{Ki}), hence this setting is not appropriate 
for hyperbolic PDEs. In  \cite{Haragus,Ma,Vander} the assumptions concerning the linearized operator are more 
general, including non-sectorial operators. However, it is unclear if our problem \reff{eq:1.1}, \reff{eq:1.2}
can be written as an abstract evolution equation satisfying those conditions.

In \cite{Vander} it is shown that 1D semilinear damped wave equations  without delay of the type
$\d_t^2u=\d_x^2u-\gamma \d_tu+f(u)$
with  $f(0)=0$, subjected to homogeneous Dirichlet boundary conditions, can be written as an abstract 
evolution equation satisfying the general assumptions of  \cite{Vander}, and a corresponding Hopf bifurcation theorem is proved.
But it turns out that nonlinearities of the type $f(u,\d_xu)$ cannot be treated this way.
In \cite{Koch} a Hopf bifurcation theorem is stated without proof for
 second-order quasilinear  hyperbolic systems without delay with arbitrary space dimension
subjected to homogeneous Dirichlet boundary conditions.
In \cite{KR3} a Hopf bifurcation theorem for general semilinear first-order 1D hyperbolic systems without delay
is proved by means of  Lyapunov-Schmidt reduction, and applications to semiconductor laser modeling are described.
In \cite{Liu,Magal,Magal1} the authors  considered Hopf bifurcation for
scalar linear first-order PDEs without delay of the type
$(\partial_t  +\partial_x + \mu)u = 0$ on the semi-axis $(0,\infty)$
with a nonlinear integral boundary condition at $x=0$.

What concerns Hopf bifurcation for hyperbolic PDEs with delay, to the best of our knowledge there exist only the two results
\cite{Kos1,Kos2} of N. Kosovali\'{c} and B. Pigott. In \cite{Kos1} the authors consider 1D damped and delayed
Sine-Gordon-like wave equations of the type
\beq
\label{Sine}
\d_t^2u(t,x)-\d_x^2u(t,x)+\d_tu(t,x)+u(t-\tau,x)=f(x,u(t-\tau,x))
\ee
with $f(-x,-u)=-f(x,u)$ and $f(x,0)=\partial_uf(x,0)=0$.
Because of the symmetry assumption on the nonlinearity $f$ the bifurcating time-periodic solutions can be determined
by means of Fourier expansions. In  \cite{Kos2} these results are generalized to equations on $d$-dimensional cubes,
but locally unique bifurcating solution families can be described for fixed prescribed spatial frequency vectors only.

Our results in the present paper extend those of  \cite{Kos1} mainly by two facts: Our equation \reff{eq:1.1} is
more general than \reff{Sine} (and does not have any symmetry property, in general), and we allow the presence of the perturbation parameter $\la$.
The symmetry assumptions of  \cite{Kos1} allow one to use Fourier series techniques, while 
we use integration along characteristics.

\section{Transformation of the second-order equation into a first-order system}
\label{sec:FOS}
\renewcommand{\theequation}{{\thesection}.\arabic{equation}}
\setcounter{equation}{0}

In this section we show that any solution $u$ to  \reff{problem} creates a solution $v=(v_1,v_2)$ to the first-order hyperbolic system
\beq
\label{FOS}
\left.
  \begin{array}{l}
    \om\d_tv_1(t,x)-a(x,\la)\d_xv_1(t,x)=[B(v,\om,\tau,\la)](t,x),\\
    \om\d_tv_2(t,x)+a(x,\la)\d_xv_2(t,x)=[B(v,\om,\tau,\la)](t,x),\\
    v_1(t,0)+v_2(t,0)= v_1(t,1)-v_2(t,1)=0,\\
    v(t+2\pi,x)=v(t,x)
  \end{array}
\right\}
\ee
and vice versa. Here the nonlinear operator $B$ is defined as
\begin{eqnarray}
\label{Bdef}
  [B(v,\om,\tau,\la)](t,x)&:=&b(x,\la,[J_\la v](t,x),[J_\la v](t-\om\tau,x), [Kv](t,x),[K_\la v](t,x))\nonumber\\
  &&  -\frac{1}{2}\d_xa(x,\la)(v_1(t,x)-v_2(t,x))
\end{eqnarray}
with partial integral operators $J_\la$ defined by
\beq
\label{Jdef}
[J_\la v](t,x):=\frac{1}{2}\int_0^x\frac{v_1(t,\xi)-v_2(t,\xi)}{a(\xi,\la)}d\xi
\ee
and with "pointwise" operators $K$ and $K_\la$ defined by 
\beq
\label{Kdef}
[Kv](t,x):=\frac{v_1(t,x)+v_2(t,x)}{2}, \; [K_\la v](t,x):=\frac{v_1(t,x)-v_2(t,x)}{2a(x,\la)}=[\d_xJ_\la v](t,x).
\ee
\begin{defn}\label{def:Cspaces2}
(i) We denote by $C_{2\pi}(\R\times [0,1];\R^2)$ the space of all continuous functions $v:[0,1] \times \R \to \R^2$ such that
$v(t+2\pi,x)=v(t,x)$ for all $t \in \R$ and $x \in [0,1]$, with the norm 
$$
\|v\|_\infty:=\max\{|v_1(t,x)|+|v_2(t,x)|: t \in \R, \; x \in [0,1]\}.
$$

(ii) For $k \in \N$  we denote by $C^k_{2\pi}(\R\times [0,1];\R^2)$ the space of all $C^k$-smooth functions
$v\in C_{2\pi}(\R\times [0,1];\R^2)$, with the norm 
$\max\{\|\d^i_t\d^j_xv\|_\infty: 0 \le i+j\le k\}$.
\end{defn}
\begin{lemma}\label{lem:FOS}
For all $\om,\tau,\la \in \R$ and $k=2,3,\ldots$ the following is true:

(i) If $u \in C_{2\pi}^k(\R\times [0,1])$ is a solution to \reff{problem}, then the function $v \in C_{2\pi}^{k-1}(\R\times [0,1];\R^2)$,
which is defined by
\beq
\label{vdef}
v_1(t,x):=\om \d_tu(t,x)+a(x,\la) \d_xu(t,x), \;v_2(t,x):=\om \d_tu(t,x)-a(x,\la) \d_xu(t,x),
\ee
is a solution to \reff{FOS}.

(ii) If  $v \in C_{2\pi}^{k-1}(\R\times [0,1];\R^2)$ is a solution to \reff{FOS}, then the function  $u \in C_{2\pi}^{k-1}(\R\times [0,1])$,
which is defined by
\beq
\label{udef}
u(t,x):=\frac{1}{2}\int_0^x\frac{v_1(t,\xi)-v_2(t,\xi)}{a(\xi,\la)}d\xi,
\ee
is $C^k$-smooth and a solution to \reff{problem}.
\end{lemma}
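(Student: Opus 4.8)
The plan is to verify both directions by direct substitution, checking the PDEs, the boundary conditions, the periodicity, and the regularity claims separately. For part (i), suppose $u \in C^k_{2\pi}$ solves \reff{problem}. Define $v_1, v_2$ by \reff{vdef}; since $u$ is $C^k$ and $a(\cdot,\la)$ is $C^\infty$, the functions $v_1,v_2$ are $C^{k-1}$ and $2\pi$-periodic in $t$. First I would compute $\om\d_t v_1 - a\d_x v_1$. Using \reff{vdef}, $\om\d_t v_1 = \om^2\d_t^2 u + \om a\,\d_t\d_x u$ and $a\d_x v_1 = \om a\,\d_x\d_t u + a\d_x(a\d_x u) = \om a\,\d_t\d_x u + a^2\d_x^2 u + a(\d_x a)\d_x u$. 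Subtracting and using the wave equation $\om^2\d_t^2 u - a^2\d_x^2 u = b(x,\la,u,u(t-\om\tau),\om\d_t u,\d_x u)$, the mixed terms cancel and I get $\om\d_t v_1 - a\d_x v_1 = b(\dots) - a(\d_x a)\d_x u$. Now I must recognize the right-hand side as $[B(v,\om,\tau,\la)](t,x)$: from \reff{Kdef}, $[Kv] = \om\d_t u$ and $[K_\la v] = \d_x u$, and from \reff{Jdef} together with \reff{udef}, $[J_\la v](t,x) = u(t,x)$ (this uses $u(t,0)=0$); also $\tfrac12(v_1 - v_2) = a\d_x u$, so $-\tfrac12(\d_x a)(v_1-v_2) = -a(\d_x a)\d_x u$, matching \reff{Bdef} exactly. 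The second PDE $\om\d_t v_2 + a\d_x v_2 = [B(\dots)]$ is obtained the same way with the opposite sign on the $a\d_x$ terms, again cancelling the mixed derivatives. The boundary conditions follow from \reff{vdef}: $v_1(t,0) + v_2(t,0) = 2\om\d_t u(t,0) = 0$ since $u(t,0)=0$ for all $t$, and $v_1(t,1) - v_2(t,1) = 2a(1,\la)\d_x u(t,1) = 0$ by the Neumann condition in \reff{problem}.

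For part (ii), suppose $v \in C^{k-1}_{2\pi}(\R\times[0,1];\R^2)$ solves \reff{FOS}, and define $u$ by \reff{udef}. Then $u$ is $2\pi$-periodic in $t$ and a priori only $C^{k-1}$; I will first check it solves \reff{problem} in a suitable sense and then bootstrap regularity. Differentiating \reff{udef}, $\d_x u = \tfrac{v_1 - v_2}{2a}= [K_\la v]$, and from \reff{udef} together with $u(t,0) = 0$ one has $[J_\la v] = u$. Next I would compute $\om\d_t u$: differentiating under the integral, $\om\d_t u(t,x) = \tfrac12\int_0^x \tfrac{\om\d_t v_1 - \om\d_t v_2}{a(\xi,\la)}d\xi$; substituting the two PDEs of \reff{FOS} for $\om\d_t v_1$ and $\om\d_t v_2$, the nonlinear term $[B]$ cancels and I am left with $\om\d_t u(t,x) = \tfrac12\int_0^x (\d_\xi v_1 + \d_\xi v_2)\,d\xi = \tfrac12\big((v_1+v_2)(t,x) - (v_1+v_2)(t,0)\big) = [Kv](t,x)$, using the boundary condition $v_1(t,0)+v_2(t,0)=0$. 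So $\om\d_t u = [Kv]$ and $\d_x u = [K_\la v]$; in particular $v_1 = \om\d_t u + a\d_x u$ and $v_2 = \om\d_t u - a\d_x u$, so \reff{vdef} holds. Now the wave equation for $u$ is recovered by differentiating these identities once more: $\om^2\d_t^2 u = \om\d_t[Kv] = \tfrac12(\om\d_t v_1 + \om\d_t v_2)$ and $a\d_x(a\d_x u) = a\d_x\big(\tfrac12(v_1-v_2)\big) = \tfrac12 a(\d_x v_1 - \d_x v_2) + \tfrac12(\d_x a)(v_1 - v_2)$; adding and subtracting the two PDEs of \reff{FOS} gives $\om\d_t v_1 + \om\d_t v_2 = a\d_x v_1 - a\d_x v_2 + 2[B]$, hence $\om^2\d_t^2 u - a^2\d_x^2 u = [B] - \tfrac12(\d_x a)(v_1-v_2)\cdot(\text{correction})$; tracking the $-\tfrac12\d_x a(v_1-v_2)$ term already present in \reff{Bdef} shows it exactly produces $b(x,\la,u,u(t-\om\tau),\om\d_t u,\d_x u)$, which is \reff{eq:1.1} in the scaled form. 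The boundary conditions $u(t,0)=0$ is immediate from \reff{udef}, and $\d_x u(t,1) = [K_\la v](t,1) = \tfrac{v_1(t,1)-v_2(t,1)}{2a(1,\la)} = 0$ by the boundary condition in \reff{FOS}.

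It remains to upgrade $u$ from $C^{k-1}$ to $C^k$. This is the one genuinely nontrivial point, and it is exactly the ``loss of derivatives'' phenomenon appearing in the easiest possible guise. The identities $\om\d_t u = [Kv]$ and $\d_x u = [K_\la v]$ already show all \emph{first-order} derivatives of $u$ are $C^{k-1}$, so $u\in C^k$ provided we can also control the pure time-derivatives $\d_t^k u$ and mixed ones. Here one uses the structure: $\d_t^{k} u = \om^{-1}\d_t^{k-1}[Kv]$ needs $\d_t^{k-1}v$, which is available; $\d_x\d_t^{k-1}u = \d_t^{k-1}[K_\la v]$, fine; and the remaining derivatives with at least two $x$-derivatives are obtained by repeatedly substituting the PDEs \reff{FOS} to trade an $x$-derivative of $v$ against a $t$-derivative plus lower-order terms involving $b$ — since $b$ and $a$ are $C^\infty$ in all arguments, each such substitution is legitimate as long as the total order does not exceed $k$. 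Concretely, I would prove by induction on $j$ that $\d_t^i\d_x^j u \in C^{k-\max(i,0)}$ for $i+j\le k$; the base cases $j\le 1$ are the identities above, and the inductive step writes $\d_x^{j}u = \d_x^{j-2}\big(a^{-2}(\om^2\d_t^2 u - [B])\big)$ from the wave equation, reducing the number of $x$-derivatives at the cost of two $t$-derivatives and $C^\infty$ coefficients. The main obstacle is bookkeeping this regularity bootstrap cleanly so that no derivative of order exceeding $k$ of $v$ is ever required; everything else is a mechanical substitution matching the definitions \reff{Bdef}--\reff{Kdef} against \reff{problem}.
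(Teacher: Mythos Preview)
Your proposal is correct and follows essentially the same route as the paper: part (i) by direct substitution using the identities $[Kv]=\om\d_tu$, $[K_\la v]=\d_xu$, $[J_\la v]=u$, and part (ii) by first showing $\om\d_tu=(v_1+v_2)/2$ via the cancellation of the $B$-terms under the integral and the boundary condition at $x=0$, then recovering the wave equation from the two first-order equations.

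The only place where you diverge is the regularity upgrade in (ii), which you overcomplicate. Once you have established the two pointwise identities $\om\d_tu=[Kv]\in C^{k-1}$ and $\d_xu=[K_\la v]\in C^{k-1}$, you are done: both first-order partial derivatives of $u$ lie in $C^{k-1}$, so $u\in C^k$ by definition. No induction or bootstrap via the wave equation is needed; this is exactly how the paper argues, in one line. Your proposed induction would work, but it obscures the point that the characteristic decomposition has already handed you one free derivative.
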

\begin{proof}
  (i) Let $u \in C_{2\pi}^k(\R\times [0,1])$ be a solution to \reff{problem}, and let $v \in C_{2\pi}^{k-1}(\R\times [0,1];\R^2)$
  be defined by \reff{vdef}. From  \reff{vdef} follows
  \begin{eqnarray*}
\d_tv_1=\om\d^2_tu+a\d_t\d_xu, & \d_xv_1=\om \d_t\d_xu+\d_xa \d_xu + a\d_x^2u,\\
    \d_tv_2=\om\d^2_tu-a\d_t\d_xu, & \d_xv_2=\om \d_t\d_xu-\d_xa \d_xu - a\d_x^2u.
  \end{eqnarray*}
  Hence
  \beq
  \label{KJ}
  \om\d_tu=\frac{v_1+v_2}{2}=Kv,\;  \d_xu=\frac{v_1-v_2}{2a}=K_\la v
  \ee
  and
   \beq
  \label{uv}
  \om^2\d_t^2u-a^2\d_x^2u-a\d_xa\d_xu=\om\d_tv_1-a\d_xv_1=\om\d_tv_2+a\d_xv_2.
  \ee
  From $u(t,0)=\d_xu(t,1)=0$ (cf. \reff{problem}) and \reff{KJ} follows $v_1(t,0)-v_2(t,0)=0$ and
  $v_1(t,1)+v_2(t,1)=0$, i.e. the boundary conditions of \reff{FOS}.
  Further, from $u(t,0)=0$ and \reff{KJ} follows also $u=J_\la v$.
  Hence, \reff{KJ}, \reff{uv} and the differential equation in \reff{problem} yield the differential equations in \reff{FOS}.

  (ii) Let $v \in C_{2\pi}^{k-1}(\R\times [0,1];\R^2)$ be a solution to \reff{FOS}, and let $u \in C_{2\pi}^{k-1}(\R\times [0,1])$
  be defined by \reff{udef}. From \reff{FOS} and \reff{udef} it follows that
  $$
  \d_tu(t,x)=\int_0^x\frac{\d_tv_1(t,\xi)-\d_tv_2(t,\xi)}{2a(\xi,\la)}d\xi=
  \int_0^x\frac{\d_xv_1(t,\xi)+\d_xv_2(t,\xi)}{2\om}d\xi=\frac{v_1(t,x)+v_2(t,x)}{2\om}.
  $$
  Hence, $\d_tu$ is $C^{k-1}$-smooth, and
  \beq
  \label{uv1}
  \om^2\d^2_tu=\frac{\om}{2}(\d_tv_1+\d_tv_2).
  \ee
  Further, \reff{udef} yields
   \beq
  \label{uv2}
  \d_xu=\frac{v_1-v_2}{2a}=K_\la v,
  \ee
  i.e.  $\d_xu$ is $C^{k-1}$-smooth also, i.e.  $u$ is $C^{k}$-smooth,
  and $2(\d_xa\d_xu+a\d_x^2u)=\d_xv_1-\d_xv_2$, i.e.
   \beq
  \label{uv3}
  a^2\d^2_xu=\frac{a}{2}(\d_xv_1-\d_xv_2)-\frac{\d_xa}{2}(v_1-v_2).
  \ee
  But \reff{FOS}, \reff{uv1} and \reff{uv3} imply
  $\om^2\d^2_tu-a^2\d^2_xu=B(v,\om,\tau,\la)+\frac{1}{2}\d_xa(v_1-v_2)$,
  i.e. the differential equation in \reff{problem}. The boundary conditions in \reff{problem} follow from the boundary conditions in \reff{FOS}
  and from \reff{udef} and \reff{uv2}.
\end{proof}

Let us calculate the linearization of the operator $B$ (cf. \reff{Bdef}) with respect to $v$ in $v=0$.
For that reason we use the following notation:
\beq
\label{bdef1}
b_j(x,\la):=\partial_jb(x,\la,0,0,0,0) \mbox{ for } j=3,4,5,6.
\ee
Remark that $b_j(x,0)=b_j^0(x)$ (cf. \reff{bdef}).
We have
\begin{eqnarray}
  \label{Bvdef}
  &&[\partial_vB(0,\om,\tau,\la)v](t,x)\nonumber\\
     &&=b_3(x,\la)[J_\la v](t,x)+b_4(x,\la)[J_\la v](t-\om \tau,x)+b_5(x,\la)[Kv](t,x)+
     b_6(x,\la)[K_\la v](t,x)\nonumber\\
     &&\;\;\;\;\;\;\;\;\;\;-\frac{1}{2}\d_xa(x,\la)(v_1(t,x)-v_2(t,x))\nonumber\\
  &&=b_1(x,\la)v_1(t,x)+b_2(x,\la)v_2(t,x)+b_3(x,\la)[J_\la v](t,x)+b_4(x,\la)[J_\la v](t-\om \tau,x)
\end{eqnarray}
with
\beq
\label{b1def}
\left.
  \begin{array}{rcl}
b_1(x,\la)&:=&\displaystyle\frac{1}{2}\left(-\partial_xa(x,\la)+b_5(x,\la)+\frac{b_6(x,\la)}{a(x,\la)}\right),\;\\
               b_2(x,\la)&:=&\displaystyle\frac{1}{2}\left(\partial_xa(x,\la)+b_5(x,\la)-\frac{b_6(x,\la)}{a(x,\la)}\right).
  \end{array}
  \right\}
  \ee
  By reasons which will be seen in Sections \ref{sec:Inteq} and \ref{Lyapunov-Schmidt Procedure} below we rewrite system \reff{FOS}
  in the following way:
\beq
\label{FOS1}
\left.
  \begin{array}{l}
    \om\d_tv_1(t,x)-a(x,\la)\d_xv_1(t,x)-b_1(x,\la)v_1(t,x)=[\B_1(v,\om,\tau,\la)](t,x),\\
    \om\d_tv_2(t,x)+a(x,\la)\d_xv_2(t,x)-b_2(x,\la)v_2(t,x)=[\B_2(v,\om,\tau,\la)](t,x),\\
    v_1(t,0)+v_2(t,0)= v_1(t,1)-v_2(t,1)=0,\\
    v(t+2\pi,x)=v(t,x)
  \end{array}
\right\}
\ee
with
\beq
\label{BBdef}
\left.
  \begin{array}{rcl}
    [\B_1(v,\om,\tau,\la)](t,x)&:=&[B(v,\om,\tau,\la)](t,x)-b_1(x,\la)v_1(t,x),\\
 \displaystyle[\B_2(v,\om,\tau,\la)](t,x)&:=&[B(v,\om,\tau,\la)](t,x)-b_2(x,\la)v_2(t,x).
\end{array}
  \right\}
  \ee
  The operators $\B_1,\B_2:C_{2\pi}(\R \times [0,1];\R^2)\times \R^3\to C_{2\pi}(\R \times [0,1])$,
  introduced in \reff{BBdef}, define an operator
  $$
  \B:=(\B_1,\B_2):C_{2\pi}(\R \times [0,1];\R^2)\times \R^3\to C_{2\pi}(\R \times [0,1];\R^2).
  $$
  Moreover, the operator $\B(\cdot,\om,\tau,\la):C_{2\pi}(\R \times [0,1];\R^2)\to C_{2\pi}(\R \times [0,1];\R^2)$
is $C^\infty$-smooth because the function $b$ is supposed to be  $C^\infty$-smooth, and
\beq
\label{diffB}
\partial_v\B(0,\om,\tau,\la)=\J(\om,\tau,\la) + \KK(\la)
\ee
with operators $\J(\om,\tau,\la),\KK(\la)\in {\cal L}(C_{2\pi}(\R \times [0,1];\R^2))$.
Their components
are defined by (cf. \reff{Bvdef} and \reff{BBdef})
\begin{eqnarray}
  &&[\J_1(\om,\tau,\la)v](t,x)=
     [\J_2(\om,\tau,\la)v](t,x):=b_3(x,\la) [J_\la v](t,x)+b_4(x,\la) [J_\la v](t-\om\tau,x)
     \nonumber\\
  &&= \displaystyle\frac{1}{2}\int_0^x\frac{b_3(x,\la)(v_1(t,\xi)-v_2(t,\xi))
     +b_4(x,\la)(v_1(t-\om\tau,\xi)-v_2(t-\om\tau,\xi))}{a(\xi,\la)}d\xi
     \label{J12def}
     \end{eqnarray}
and
\beq
\label{K12def}
 \displaystyle [\KK_1(\la)v](t,x)=b_2(x,\la)v_2(t,x),\;  \displaystyle [\KK_2(\la)v](t,x)=b_1(x,\la)v_1(t,x).
\ee
Hence, the linearization with respect to $v$ in $v=0$ of the right-hand side of \reff{FOS1} has a special structure: It is the sum
of the partial integral operator $\J(\om,\tau,\la)$ and of the ``pointwise'' operator $\KK(\la)$, which has vanishing diagonal part.
This structure will be used
in Subsection \ref{Fredholmness of the linearization} below, cf. Remark \ref{diago}.

\section{Transformation of the first-order system into a system of  partial integral equations}
\label{sec:Inteq}
\renewcommand{\theequation}{{\thesection}.\arabic{equation}}
\setcounter{equation}{0}

In this section we show (by using the method of integration along characteristics)
that any solution to \reff{FOS}, i.e. to \reff{FOS1}, solves the system of partial integral equations
\beq
\label{IE}
\left.
  \begin{array}{l}
    v_1(t,x)+c_1(x,0,\la)v_2(t+\om A(x,0,\la),0)\\
    =\displaystyle-\int_0^x\frac{c_1(x,\xi,\la)}
                {a(\xi,\la)}[\B_1(v,\om,\tau,\la)](t+\om A(x,\xi,\la),\xi)d\xi,\\
    v_2(t,x)-c_2(x,1,\la)v_1(t-\om A(x,1,\la),1)\\
    =\displaystyle\int_x^1\frac{c_2(x,\xi,\la)}
                {a(\xi,\la)}[\B_2(v,\om,\tau,\la)](t-\om A(x,\xi,\la),\xi)d\xi
  \end{array}
\right\}
\ee
and vice versa. Here the operators $\B_1$ and $\B_1$ are from \reff{BBdef}, and
the functions $c_1$, $c_2$ and $A$  are defined by (cf. \reff{bdef1} and \reff{b1def})
$$
  c_1(x,\xi,\la):=\exp\int_x^\xi\frac{b_1(\eta,\la)}{a(\eta,\la)}d\eta,\;
  c_2(x,\xi,\la):=\exp\int_\xi^x\frac{b_2(\eta,\la)}{a(\eta,\la)}d\eta,\; 
A(x,\xi,\la):=\int_\xi^x\frac{d\eta}{a(\eta,\la)}.
$$
                    
 \begin{lemma}\label{lem:parint}
For all $\om,\tau,\la \in \R$ the following is true:

(i) If $v \in C^1_{2\pi}(\R\times [0,1];\R^2)$ is a solution to \reff{FOS}, then it is a solution to \reff{IE}.

(ii) If  $v \in C_{2\pi}(\R\times [0,1];\R^2)$ is a solution to \reff{IE}
and if $\d_tv$ exists and is continuous, then $v$ belongs to  $C^1_{2\pi}(\R\times [0,1];\R^2)$
and solves \reff{FOS}.
\end{lemma}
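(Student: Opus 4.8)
The plan is to use the standard method of integration along characteristics for the two transport equations in \reff{FOS1}, exploiting the boundary conditions in \reff{FOS1} to fix the ``constant of integration'' along each characteristic line. For part (i), suppose $v\in C^1_{2\pi}$ solves \reff{FOS}, hence also \reff{FOS1}. First I would look at the first equation $\om\d_tv_1 - a\d_xv_1 - b_1v_1 = \B_1(v,\om,\tau,\la)$. Along a characteristic curve $\xi\mapsto t+\om A(x,\xi,\la)$ passing through $(t,x)$ — this is exactly the curve whose $x$-derivative of the time coordinate is $-1/a(\xi,\la)$, so that $\frac{d}{d\xi}\big(v_1(t+\om A(x,\xi,\la),\xi)\big) = \frac{1}{a(\xi,\la)}\big(-\om\d_tv_1 + a\d_xv_1\big)$ evaluated at $(t+\om A(x,\xi,\la),\xi)$ — one multiplies by the integrating factor $c_1(x,\xi,\la) = \exp\int_x^\xi \frac{b_1(\eta,\la)}{a(\eta,\la)}d\eta$ to absorb the zero-order term $b_1v_1$. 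This turns the left-hand side into the total $\xi$-derivative of $c_1(x,\xi,\la)v_1(t+\om A(x,\xi,\la),\xi)$, and integrating from $\xi=0$ to $\xi=x$ gives
$$
v_1(t,x) - c_1(x,0,\la)v_1(t+\om A(x,0,\la),0) = -\int_0^x \frac{c_1(x,\xi,\la)}{a(\xi,\la)}[\B_1(v,\om,\tau,\la)](t+\om A(x,\xi,\la),\xi)\,d\xi,
$$
using $c_1(x,x,\la)=1$ and $A(x,x,\la)=0$. Now the boundary conditions $v_1(t,0)+v_2(t,0)=0$ let me replace $v_1(t+\om A(x,0,\la),0)$ by $-v_2(t+\om A(x,0,\la),0)$, which yields exactly the first equation of \reff{IE}. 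The second equation of \reff{IE} is obtained identically from the second transport equation in \reff{FOS1}, integrating along the opposite family of characteristics $\xi\mapsto t-\om A(x,\xi,\la)$ from $\xi=x$ to $\xi=1$ and then using the other boundary condition $v_1(t,1)-v_2(t,1)=0$ to rewrite $v_1(t-\om A(x,1,\la),1)$ in terms of $v_2$; here one also needs the $2\pi$-periodicity in $t$, which is inherited by both sides.

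For part (ii), assume $v\in C_{2\pi}$ solves \reff{IE} and that $\d_tv$ exists and is continuous. The first task is regularity: I would differentiate the integral equations \reff{IE} in $x$. Since $c_1$, $c_2$, $a$, $A$ and the coefficients appearing in $\B_1,\B_2$ are all $C^\infty$, and since $\d_tv$ is assumed continuous (so the chain rule applies to the shifted arguments $t\pm\om A$), the right-hand sides of \reff{IE} are continuously differentiable in $x$; hence $\d_xv_1$ and $\d_xv_2$ exist and are continuous, so $v\in C^1_{2\pi}$. Then I would reverse the computation of part (i): differentiating the first equation of \reff{IE} with respect to $x$ and simplifying — the boundary term and the $\xi=x$ endpoint of the integral combine with the $\d_x$-hitting-the-integrand contributions, and the factor $c_1$ produces precisely the $b_1v_1$ term — recovers $\om\d_tv_1 - a\d_xv_1 - b_1v_1 = \B_1(v,\om,\tau,\la)$; likewise for the second component. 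The boundary conditions in \reff{FOS1} are obtained by evaluating the integral equations \reff{IE} at $x=0$ (the first one, where the integral vanishes) and at $x=1$ (the second one), which forces $v_1(t,0)+c_1(0,0,\la)v_2(t,0)=0$ and $v_2(t,1)-c_2(1,1,\la)v_1(t,1)=0$; since $c_1(0,0,\la)=c_2(1,1,\la)=1$ these are exactly $v_1(t,0)+v_2(t,0)=0$ and $v_1(t,1)-v_2(t,1)=0$. Finally, recalling from \reff{BBdef} that $\B_j$ differs from $B$ only by the linear term $b_jv_j$, equations \reff{FOS1} are equivalent to \reff{FOS}.

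I expect the only genuinely delicate point to be the regularity bootstrap in part (ii): one must check carefully that differentiating \reff{IE} in $x$ is legitimate given only continuity of $\d_tv$ — in particular that the shifted-argument terms $\B_j(v,\om,\tau,\la)(t\pm\om A(x,\xi,\la),\xi)$ depend differentiably on $x$ through the characteristic time. This works because $\d_x A(x,\xi,\la)=1/a(x,\la)$ is smooth and $\B_j$ involves $v$ only through the continuous operators $J_\la$, $K$, $K_\la$ and time shifts, so the only $x$-dependence entering via $t$ is through $\d_t$ of a continuous function, which is exactly what we assumed exists. Everything else — the integrating-factor manipulation, the endpoint bookkeeping, the use of the boundary conditions, and the $2\pi$-periodicity — is routine once this differentiation is justified.
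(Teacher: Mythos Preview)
Your proposal is correct and follows essentially the same route as the paper: integration along characteristics with the integrating factors $c_1,c_2$ for part (i), and the regularity bootstrap plus reversal of that computation for part (ii). The only organizational difference is that in part (ii) the paper applies the characteristic operator $\om\partial_t - a(x,\la)\partial_x$ to both sides of \reff{IE} (which annihilates every factor of the form $\varphi(t+\om A(x,\xi,\la))$ and so makes the cancellation immediate), whereas you differentiate in $x$ alone and then must substitute \reff{IE} and its $t$-derivative back in to eliminate the remaining integrals; both computations are equivalent and yield the same conclusion.
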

\begin{proof}  (i) Let $v\in C^1_{2\pi}(\R\times [0,1];\R^2)$ be given.
  Because of $c_1(x,x,\la)=1$ and $A(x,x,\la)=0$ we get
  \begin{eqnarray*}
    && v_1(t,x)-c_1(x,0,\la)v_1(t+\om A(x,0,\la),0)=\int_0^x\frac{d}{d\xi}\left(c_1(x,\xi,\la)v_1(t+\om A(x,\xi,\la),\xi)\right)d\xi\\
    &&=\int_0^x\partial_\xi c_1(c,\xi,\la)v_1(t+\om A(x,\xi,\la),\xi)d\xi\\
    &&\;\;\;\;\;+\int_0^xc_1(x,\xi,\la)\left(\partial_tv_1(t+\om A(x,\xi,\la),\xi)\om \partial_\xi A(x,\xi,\la)
       +\partial_xv_1(t+\om A(x,\xi,\la),\xi)\right)d\xi.
  \end{eqnarray*}
  From $\partial_\xi A(x,\xi,\la)=-1/a(\xi,\la)$ and $\partial_\xi c_1(x,\xi,\la)=b_1(\xi,\la)c_1(x,\xi,\la)/a(\xi,\la)$ it follows that
  \begin{eqnarray*}
    %\label{v1}
    && v_1(t,x)-c_1(x,0,\la)v_1(t+\om A(x,0,\la),0)\nonumber\\
    &&=\int_0^x\frac{c_1(x,\xi,\la)}{a(\xi,\la)}\Big[-\om \partial_tv_1(s,\xi)
    +a(\xi,\la)\partial_xv_1(s,\xi)+b_1(\xi,\la)v_1(s,\xi)\Big]_{s=t+\om A(x,\xi,\la)}d\xi.
  \end{eqnarray*}
Similarly one shows that
 \begin{eqnarray*}
    && v_2(t,x)-c_2(x,1,\la)v_1(t-\om A(x,1,\la),1)=-\int_x^1\frac{d}{d\xi}\left(c_2(x,\xi,\la)v_2(t-\om A(x,\xi,\la),\xi)\right)d\xi\\
    &&=-\int_x^1\frac{c_2(x,\xi,\la)}{a(\xi,\la)}\Big[\om \partial_tv_1(s,\xi)
    +a(\xi,\la)\partial_xv_1(s,\xi)-b_2(\xi,\la)v_1(s,\xi)\Big]_{s=t-\om A(x,\xi,\la)}d\xi.
  \end{eqnarray*}
But this yields that, if $v$ is a solution to \reff{FOS}, i.e. to \reff{FOS1}, then it is a solution to \reff{IE}

  (ii) Let  $v \in C_{2\pi}(\R\times [0,1];\R^2)$ be a solution to \reff{IE}.
  The first equation of \reff{IE} yields $v_1(t,0)=c_1(0,0,\la)v_2(t+\om A(0,0,\la),0)=-v_2(t,0)$, i.e. the first boundary condition of
  \reff{FOS}. Similarly the second boundary condition of
  \reff{FOS} follows from the  second equation of \reff{IE}.

  Further, from  \reff{IE} and from the assumption, that
  $\d_tv$ exists and is continuous, it follows that also  $\d_xv$ exists and is continuous, i.e. $v\in C^1_{2\pi}(\R\times [0,1];\R^2)$.

  Now, let us verify the differential equations in \reff{FOS}, i.e. in \reff{FOS1}. From \reff{IE} it follows that
  \begin{eqnarray}
    \label{rhs}
    &&\left(\om\d_t-a(x,\la)\d_x\right)\left(v_1(t,x)+c_1(x,0,\la)v_2(t+\om A(x,0,\la),0)\right)\nonumber\\
    &&=-\left(\om\d_t-a(x,\la)\d_x\right)\int_0^x\frac{c_1(x,\xi,\la)}{a(\xi,\la)}
       [\B_1(v,\om,\tau,\la)](t+\om A(x,\xi,\la),\xi)d\xi.
       \end{eqnarray}
Because of $\partial_xc_1(x,0,\la)=-b_1(x,\la)c_1(x,0,\la)/a(x,\la)$ and 
 $$
  \left(\om\d_t-a(x,\la)\d_x\right)\vp(t+\om A(x,\xi,\la))=0 \mbox{ for all } \vp \in C^1(\R)
  $$
the left-hand side of \reff{rhs} is
$
\left(\om\d_t-a(x,\la)\d_x\right)v_1(t,x)+b_1(x,\la)v_2(t+\om A(x,0,\la),0),
$
and the  right-hand side of \reff{rhs} is
$$
b_1(x,\la)\int_0^x\frac{c_1(x,\xi,\la)}{a(\xi,\la)}
       [\B_1(v,\om,\tau,\la)](t+\om A(x,\xi,\la),\xi)d\xi+[\B_1(v,\om,\tau,\la)](t,x).
       $$
       Hence, the first equation of \reff{FOS1} is shown.
       Using  $\partial_xc_2(x,0,\la)=b_2(x,\la)c_1(x,0,\la)/a(x,\la)$, one gets  similarly
       \begin{eqnarray*}
    &&\left(\om\d_t+a(x,\la)\d_x\right)\left(v_2(t,x)+c_2(x,0,\la)v_1(t-\om A(x,0,\la),0)\right)\\
    &&=\left(\om\d_t+a(x,\la)\d_x\right)v_2(t,x) +b_2(x,\la)v_1(t-\om A(x,0,\la),0)\\
    &&=- \left(\om\d_t+a(x,\la)\d_x\right)\int_x^1\frac{c_2(x,\xi,\la)}{a(\xi,\la)}
       [\B_2(v,\om,\tau,\la)](t-\om A(x,\xi,\la),\xi)d\xi\\
    &&=b_2(x,\la)\int_x^1\frac{c_2(x,\xi,\la)}{a(\xi,\la)}
       [\B_2(v,\om,\tau,\la)](t+\om A(x,\xi,\la),\xi)d\xi+[\B_2(v,\om,\tau,\la)](t,x),
  \end{eqnarray*}
  i.e. the second equation of \reff{FOS1} is shown.
\end{proof}

\section{Lyapunov-Schmidt procedure}
\label{Lyapunov-Schmidt Procedure}
\renewcommand{\theequation}{{\thesection}.\arabic{equation}}
\setcounter{equation}{0}

In this section we do a Lyapunov-Schmidt procedure in order to reduce locally for $v \approx 0$, $\om \approx 1$, $\tau \approx \tau_0$ and
  $\la \approx 0$
the problem \reff{IE} with infinite-dimensional state parameter $(v,\om)$ to a problem with a two-dimensional state parameter.

For the sake of simplicity, we will write the problem \reff{IE} in a more abstract way. For that reason for $\om,\la \in \R$ let us introduce
operators
$
\CC(\om,\la),\DD(\om,\la)\in {\cal L}(C_{2\pi}(\R \times [0,1];\R^2))
$
with components  $\CC_j(\om,\la),\DD_j(\om,\la)\in {\cal L}(C_{2\pi}(\R \times [0,1]);\R^2);C_{2\pi}(\R \times [0,1]))$, $j=1,2$, which are defined by
\beq
\label{Cdef}
\left.
\begin{array}{rcl}  
[\CC_1(\om,\la)v](x,t)&:=&-c_1(x,0,\la)v_2(t+\om A(x,0,\la),0),\\
\displaystyle[\CC_2(\om,\la)v](x,t)&:=&c_2(x,1,\la)v_1(t-\om A(x,1,\la),1)
\end{array}
\right\}
\ee
and 
\beq
\label{Ddef}
\left.
\begin{array}{rcl}  
[\DD_1(\om,\la)v](x,t)&:=& \displaystyle
    -\int_0^x\frac{c_1(x,\xi,\la)}{a(\xi,\la)}v_1(t+\om A(x,\xi,\la),\xi)d\xi,\\
\displaystyle[\DD_2(\om,\la)v](x,t)&:=&\displaystyle
\int_x^1\frac{c_2(x,\xi,\la)}{a(\xi,\la)}v_2(t-\om A(x,\xi,\la),\xi)d\xi.
\end{array}
\right\}
\ee
Using this notation, the system \reff{IE} reads
\beq
\label{abstract}
v=\CC(\om,\la)v+\DD(\om,\la)\B(v,\om,\tau,\la),
\ee
where the nonlinear operator $\B$ is introduced in \reff{BBdef}.
\begin{rem}
  Also the first-order hyperbolic system \reff{FOS1} can be written in an abstract way, namely as
  \beq
  \label{FOSab}
  \A(\om,\la)v=\B(v,\om,\tau,\la)
  \ee
  with $\A(\om,\la) \in {\cal L}(C^1_{2\pi}(\R\times[0,1];\R^2);C_{2\pi}(\R\times[0,1];\R^2))$ defined by
  \beq
  \label{AAdef}
  [\A(\om,\la)v](t,x):=
  \left[
    \begin{array}{c}
      \om\d_tv_1(t,x)-a(x,\la)\d_xv_1(t,x)-b_1(x,\la)v_1(t,x)\\
       \om\d_tv_2(t,x)+a(x,\la)\d_xv_2(t,x)-b_2(x,\la)v_2(t,x)
    \end{array}
  \right].
  \ee
  Remark that in the proof of Lemma \ref{lem:parint} we showed that for all $\om,\la \in \R$ it holds
  \beq
  \label{Aid1}
  \A(\om,\la)\CC(\om,\la)v= \A(\om,\la)\DD(\om,\la)v-v=0 \mbox{ for all } v \in C^1_{2\pi}(\R\times[0,1];\R^2)
  \ee
  and
\beq
  \label{Aid2}
  \DD(\om,\la)\A(\om,\la)v=v-\CC(\om,\la)v \mbox{ for all } v \in C^1_{2\pi}(\R\times[0,1];\R^2)
  \mbox{ with } [v_1+v_2]_{x=0}=[v_1-v_2]_{x=1}=0.
  \ee
\end{rem}

It is easy to see that the operators  $\CC(\om,\la),\DD(\om,\la)$, $\J(\om,\tau,\la)$ and $\KK(\la)$
(cf. \reff{J12def}, \reff{K12def}) are bounded with respect to $\om$ and $\tau$ and locally bounded with respect to $\la$, i.e., for any $c>0$ it holds
%   locally bounded with respect to the parameters $\om, \tau$ and $\la$, i.e.
   \beq
   \label{bounded}
   \sup_{\om,\tau \in \R, |\la|\le c}\{\|\CC(\om,\la)v\|_\infty+\|\DD(\om,\la)v\|_\infty+\|\J(\om,\tau,\la)v\|_\infty
   +\|\KK(\la)v\|_\infty :\; \|v\|_\infty \le 1\}<\infty.
   \ee
But, unfortunately, the operators $\CC(\om,\la)$ and $\DD(\om,\la)$ 
do not depend continuously (in the sense of the uniform operator norm in
${\cal L}(C_{2\pi}(\R \times [0,1];\R^2))$ on $\om$ and $\la$, in general, and  $\J(\om,\tau,\la)$
does not depend  continuously on $\om$ and $\tau$, in general.
This is the main technical difficulty which we have to overcome
in order to analyze the bifurcation problem \reff{abstract}.
Remark that this difficulty appears also in the case if $\tau$
is fixed to be zero (and $\la$ is used to be the bifurcation parameter),
i.e. in the case of Hopf bifurcation for semilinear wave equations without delay.

\subsection{Fredholmness of the linearization}
\label{Fredholmness of the linearization}
We intend to show that the linearization  of \reff{abstract} at $v=0$, i.e., the operator
$$
I-\CC(\om,\la)-\DD(\om,\la)\partial_v\B(0,\om,\tau,\la)=I-\CC(\om,\la)-\DD(\om,\la)(\J(\om,\tau,\la)+\KK(\la)),
$$
is a Fredholm operator of index zero from
the space $C_{2\pi}(\R \times [0,1];\R^2)$
into itself. 
\begin{lemma}\label{Fredia}
  Let the condition \reff{Fred} be fulfilled. Then there exists $\delta>0$ such that for all $\om,\tau,\la \in \R$ with  $\om \ne 0$
  and $|\la|<\delta$
  the operator $I-\CC(\om,\la)-\DD(\om,\la)\partial_v\B(0,\om,\tau,\la)$ is a Fredholm operator of index zero from
  the space $C_{2\pi}(\R \times [0,1];\R^2)$ into itself.
\end{lemma}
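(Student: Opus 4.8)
The plan is to decompose the operator $I-\CC(\om,\la)-\DD(\om,\la)\partial_v\B(0,\om,\tau,\la)$ into a ``main part'', whose Fredholmness is known from the literature, plus a compact perturbation, and then invoke stability of the Fredholm index under compact perturbations. Recall from \reff{diffB} that $\partial_v\B(0,\om,\tau,\la)=\J(\om,\tau,\la)+\KK(\la)$, where $\J$ is the partial integral operator from \reff{J12def} and $\KK$ is the ``pointwise'' operator with vanishing diagonal from \reff{K12def}. First I would show that the term coming from $\J(\om,\tau,\la)$ produces a compact operator on $C_{2\pi}(\R\times[0,1];\R^2)$: indeed $\J(\om,\tau,\la)$ involves an integration $\int_0^x(\cdot)\,d\xi$ in the space variable, which gains one derivative in $x$, and composed with the bounded operator $\DD(\om,\la)$ this should yield a map into a space of functions that are Lipschitz (hence, by Arzel\`a--Ascoli on $[0,1]$ in $x$) compactly embedded — modulo the fact that there is no smoothing in $t$, so I would need to argue compactness using the delay structure and the explicit form of $\DD(\om,\la)$ carefully, or alternatively absorb $\DD(\om,\la)\J(\om,\tau,\la)$ into the known-Fredholm machinery. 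The cleanest route is to cite \cite{KRsecond}, \cite[Corollary 4.11]{KR4}: those references establish Fredholmness of index zero for exactly the linearized partial integral operators of this type, so the job reduces to checking that their hypotheses hold here, and the only hypothesis that is not automatic is a non-resonance/non-small-divisor condition, which is where \reff{Fred} enters.

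Concretely, I expect the proof to be organized as a sequence of claims. \textbf{Claim 1} would reduce Fredholmness at general $(\om,\tau,\la)$ with $\om\ne0$, $|\la|$ small, to Fredholmness at $(\om,\tau,0)$ by treating the $\la$-dependence of the coefficients $c_1,c_2,A,b_j$ as a perturbation; since these depend $C^\infty$-smoothly on $\la$ and the operators are locally bounded by \reff{bounded}, for $|\la|$ small one stays in the open set of Fredholm operators of index zero (once that set is shown nonempty and open at $\la=0$). \textbf{Claim 2} would handle the compact part: $\DD(\om,\la)\KK(\la)$ and $\DD(\om,\la)\J(\om,\tau,\la)$ are to be shown compact, the former because $\KK(\la)$ has vanishing diagonal and the characteristic transport in $\DD$ mixes the two components, the latter because of the $x$-integration smoothing; modulo these, the operator equals $I-\CC(\om,\la)$ plus compact. \textbf{Claim 3} would analyze $I-\CC(\om,\la)$ directly: $\CC(\om,\la)$ only evaluates $v$ at the boundary points $x=0$ and $x=1$ and composes with a time-shift, so $\CC(\om,\la)^2$ maps into functions determined by two boundary traces, and one computes the kernel and cokernel of $I-\CC(\om,\la)$ by the reflection/Fourier-in-$t$ decomposition: on each Fourier mode $e^{ikt}$ the operator $I-\CC(\om,\la)$ acts by a $2\times2$ matrix whose invertibility fails only for those $k$ where a ``small divisor'' would occur. \textbf{Claim 4} is where \reff{Fred} is used: the quantity $\int_0^1 b_5^0(x)/a_0(x)\,dx$ governs the decay (in $k$) of the inverse of these matrices, i.e. $\reff{Fred}\ne0$ guarantees a uniform-in-$k$ bound ensuring the family of modewise inverses assembles into a bounded operator on $C_{2\pi}$, so that $I-\CC-\text{compact}$ is Fredholm of index zero.

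The main obstacle I anticipate is \textbf{Claim 4}, the small-divisor/uniform-bound step. Working in $C$-spaces rather than $L^2$ means one cannot simply read off boundedness of the inverse from an $\ell^\infty$ bound on eigenvalues; one needs the spectral mapping property for 1D hyperbolic semigroups in $C$-spaces (as in \cite{Lichtner1}) together with a Riesz-basis/summability argument (as in \cite{Guo,Joly}) to pass from modewise estimates to an operator bound, and it is precisely the hypothesis $\int_0^1 b_5^0/a_0\,dx\ne0$ that makes the damping ``effective'' uniformly across Fourier modes and rules out the degenerate case where the spectrum of the transport part accumulates on the imaginary axis. A secondary difficulty is making the compactness argument for $\DD(\om,\la)\J(\om,\tau,\la)$ rigorous despite the absence of any smoothing in the time variable — I would handle this by exploiting that $\DD$ integrates along characteristics, so the composition does in fact regularize in a diagonal space-time direction, or else by citing the Fredholmness results of \cite{KRsecond},\cite{KR4} directly for the full linearized partial integral operator and thereby sidestepping a separate compactness lemma. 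Either way, the algebraic heart of the matter — and the only place where \reff{Fred} is indispensable — is the modewise invertibility estimate, exactly as the paper signals in the remark that Claim 4 of this lemma is where \reff{Fred} is used and that the conclusion fails without it.
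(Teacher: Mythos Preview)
Your overall strategy---split into a main part plus compact perturbation---matches the paper, and you correctly identify that $\DD(\om,\la)\J(\om,\tau,\la)$ is compact (it actually maps $C_{2\pi}$ into $C^1_{2\pi}$, by a change of variables in the resulting double integral). But two of your claims are genuinely wrong and would make the proof fail.

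\textbf{First gap: $\DD(\om,\la)\KK(\la)$ is not compact.} Your Claim~2 asserts compactness of $\DD\KK$, but this fails: $\KK$ is a pointwise multiplication (no smoothing), and $\DD$ integrates along characteristics with a $\xi$-dependent time shift $t\mapsto t+\om A(x,\xi,\la)$, so the composition is still only a partial integral operator with no regularizing effect in $t$. The paper stresses this explicitly: the linearization $\CC+\DD\partial_v\B(0,\cdot)$ is \emph{not} completely continuous. What the vanishing diagonal of $\KK$ buys is not compactness of $\DD\KK$ but compactness of $\DD\KK\DD$ and $\DD\KK\CC$ (because the off-diagonal structure forces the two characteristic directions to interact, and the resulting change of variables produces an honest integral in~$t$). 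The paper then invokes Nikolskii's criterion: if $K^2$ is compact then $I-K$ is Fredholm of index zero. Applied with $K=(I-\CC)^{-1}\DD\KK$, the identity
\[
((I-\CC)^{-1}\DD\KK)^2=(I-\CC)^{-1}\bigl((\DD\KK)^2+\DD\KK\CC(I-\CC)^{-1}\DD\KK\bigr)
\]
reduces everything to the compactness of $\DD\KK\DD$ and $\DD\KK\CC$. You never mention this squaring trick, and without it your argument has a hole.

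\textbf{Second gap: the role of \reff{Fred}.} Your Claim~4 frames \reff{Fred} as a small-divisor condition controlling modewise inverses via Fourier decomposition, spectral mapping in $C$-spaces, and Riesz bases. None of that appears here. Condition \reff{Fred} is used for something much more elementary: the operator $\CC(\om,\la)$ reduces, after two reflections, to a pure time shift multiplied by the constant $c_1(1,0,\la)c_2(0,1,\la)=\exp\int_0^1 b_5(x,\la)/a(x,\la)\,dx$. Assumption \reff{Fred} says this constant is $\ne 1$ at $\la=0$, hence (by continuity, for $|\la|<\delta$) either $<1$ or $>1$, and in either case $I-\CC(\om,\la)$ is an \emph{isomorphism} on $C_{2\pi}$ by a Neumann series (applied to the equation or its rearrangement). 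No Fourier analysis, no small divisors, no spectral mapping---just a contraction argument on the boundary trace $v_2(\cdot,0)$. This isomorphism is what lets you factor out $I-\CC$ and apply Nikolskii to the remainder.
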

The main complication in the proof is caused by the fact that
 the operators $\CC(\om,\la)+\DD(\om,\la)\partial_v\B(0,\om,\tau,\la)$ are not completely continuous from
the space $C_{2\pi}(\R \times [0,1];\R^2)$ into itself, in general. 

The proof will be divided into a number of claims.

\begin{claim}\label{Jlemm}
  For all $\om,\tau,\la \in \R$ with  $\om \ne 0$ and all $v \in C_{2\pi}(\R \times [0,1];\R^2)$
  we have $\DD(\om,\la)\J(\om,\tau,\la)v \in C^1_{2\pi}(\R \times [0,1];\R^2)$, and for any $c>0$ it holds
  \beq
\label{sup1}
  \sup_{1/c \le \om \le c, \tau \in \R, |\la| \le c}\{\|\partial_t\DD(\om,\la)\J(\om,\tau,\la)v\|_\infty+\|\partial_x\DD(\om,\la)\J(\om,\tau,\la)v\|_\infty:
  \;\|v\|_\infty \le 1\}<\infty.
  \ee
\end{claim}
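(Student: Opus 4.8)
The plan is to show that the composed operator $\DD(\om,\la)\J(\om,\tau,\la)$ is smoothing (gains one derivative in both $t$ and $x$), with norm bounds uniform on compact $\om$-ranges bounded away from zero, on all of $\tau\in\R$, and on compact $\la$-ranges. The key structural observation is that $\J(\om,\tau,\la)v$ already involves an $x$-integration (see \reff{J12def}), so it maps $C_{2\pi}$ into functions that are $C^1$ in $x$; then the $\xi$-integration present in the definition \reff{Ddef} of $\DD$ supplies, after the usual change of variables along characteristics, an extra $t$-regularization. Concretely, I would write out $[\DD_1(\om,\la)\J_1(\om,\tau,\la)v](t,x)$ explicitly as the iterated integral
\[
-\int_0^x\frac{c_1(x,\xi,\la)}{a(\xi,\la)}\cdot\frac{1}{2}\int_0^\xi\frac{b_3(\xi,\la)(v_1(s,\eta)-v_2(s,\eta))+b_4(\xi,\la)(v_1(s-\om\tau,\eta)-v_2(s-\om\tau,\eta))}{a(\eta,\la)}\,d\eta\,d\xi
\]
evaluated at $s=t+\om A(x,\xi,\la)$, and similarly for the second component.

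\emph{Gaining the $t$-derivative.} Here I would perform the substitution $s=t+\om A(x,\xi,\la)$ in the \emph{outer} integral, i.e.\ replace the integration variable $\xi$ by $s$. Since $\partial_\xi A(x,\xi,\la)=-1/a(\xi,\la)$, the Jacobian is $d\xi = -a(\xi,\la)/\om\,ds$, which exactly cancels the $1/a(\xi,\la)$ factor and produces a prefactor $1/\om$; the limits $\xi\in[0,x]$ become $s\in[t,\,t+\om A(x,0,\la)]$. After this substitution the integrand depends on $t$ only through the (now fixed) integration limits and through the smooth coefficients $c_1,b_j,a$ (with $\xi=\xi(s,x)$ the inverse of $s\mapsto t+\om A(x,\xi,\la)$, which is $C^\infty$ and has $\xi$-derivatives bounded uniformly for $\om$ in a compact set away from zero). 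Differentiating with respect to $t$ then hits only the boundary terms and the coefficient functions, producing again a bounded integral of $v$ plus boundary evaluations of the inner $\xi$-integral of $v$ — all continuous and bounded by $\|v\|_\infty$ with the claimed uniformity. The $1/\om$ and the $\xi$-derivative of the inverse function are the only places where $\om$ bounded away from $0$ is used, which explains the hypothesis $1/c\le\om\le c$.

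\emph{Gaining the $x$-derivative.} This is easier and uses no substitution: in the original iterated-integral form, differentiating in $x$ produces (i) the boundary term from the outer upper limit $\xi=x$, namely $-\frac{1}{2a(x,\la)}\int_0^x\frac{b_3(x,\la)(\dots)+b_4(x,\la)(\dots)}{a(\eta,\la)}d\eta$ evaluated at $s=t$, which is continuous and bounded by $\|v\|_\infty$; (ii) the term where $\partial_x$ falls on $c_1(x,\xi,\la)$, bounded since $c_1$ is smooth; and (iii) the term where $\partial_x$ falls on $s=t+\om A(x,\xi,\la)$ inside $v$ — but $v$ need not be differentiable, so this term must instead be handled by first substituting as above (so that the $x$-dependence migrates out of $v$ into the limits and coefficients) and then differentiating, reducing it to the same kind of bounded expression. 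Combining the two components and both $t$- and $x$-estimates gives \reff{sup1}, and the fact that all the resulting expressions are genuine continuous functions of $(t,x)$ (uniform limits of continuous functions, or integrals of continuous integrands) gives $\DD(\om,\la)\J(\om,\tau,\la)v\in C^1_{2\pi}$.

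I expect the main obstacle to be purely bookkeeping: carrying out the characteristic substitution $\xi\leftrightarrow s$ cleanly in the \emph{double} integral, keeping track of which inner variable ($\eta$ or the inner $s$-shift by $\om\tau$) is affected, and verifying that the inverse function $\xi(s,x)$ and its $x$- and $s$-derivatives are bounded uniformly for $(\om,\la)$ in the stated compact set (this follows from $\partial_\xi A = -1/a <0$ and $a$ being smooth and bounded below on $[0,1]$, so $s\mapsto\xi$ is a smooth diffeomorphism with controlled derivatives). There is no small-divisor or loss-of-derivatives issue in \emph{this} particular claim — on the contrary, this claim is one of the places where the partial-integral structure \emph{helps}, turning part of the linearization into a compact (indeed smoothing) operator; the genuinely delicate estimates are postponed to the later claims handling $\CC(\om,\la)$ and $\DD(\om,\la)\KK(\la)$, where $\om$-dependence is discontinuous and \reff{Fred} is invoked.
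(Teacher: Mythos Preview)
Your approach is essentially the same as the paper's: write out the double integral and perform the characteristic substitution $\xi\leftrightarrow s$ (the paper calls it $\zeta$) so that $v$ is evaluated at $(s,\eta)$ with no residual $t$- or $x$-dependence inside its arguments; then differentiate and estimate. The only tactical difference is that the paper first swaps the order of integration via Fubini, $\int_0^x\!\int_0^\xi\,d\eta\,d\xi=\int_0^x\!\int_\eta^x\,d\xi\,d\eta$, and \emph{then} substitutes in the (now inner) $\xi$-integral with $\eta$ held fixed. This makes the bookkeeping a bit cleaner: after the swap the outer $\eta$-limits are $[0,x]$ and do not involve the substitution at all, so the $t$-dependence after the change of variables sits only in the inner limits and in $\xi=\xi(\zeta,\eta,t,\om,\la)$ inside the smooth coefficients; in particular you avoid the extra term coming from the $t$-dependence of the inner upper limit $\xi(s-t,x,\ldots)$ that your direct substitution produces. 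Either ordering works and yields the same uniform bound \reff{sup1}.
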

\begin{subproof}
	The idea of the proof is to show that the composition of the two
	partial integral operators  $\DD(\om,\la)$ and  $\J(\om,\tau,\la)$
  is an  integral operator mapping  $C_{2\pi}(\R \times [0,1];\R^2)$
  into  $C^1_{2\pi}(\R \times [0,1];\R^2)$.
  Indeed, for $v \in C_{2\pi}(\R \times [0,1];\R^2)$ we have
  \begin{eqnarray*}
    &&[\DD_1(\om,\la)\J(\om,\tau,\la)v](t,x)\\
    &&=-\frac{1}{2}\int_0^x\left(\frac{c_1(x,\xi,\la)b_3(\xi,\la)}{a(\xi,\la)}\int_0^\xi
       \frac{v_1(t+\om A(\eta,\xi,\la),\eta)-v_2(t+\om A(\eta,\xi,\la),\eta)}{a(\eta,\la)}d\eta\right)d\xi\\
     &&-\frac{1}{2}\int_0^x\left(\frac{c_1(x,\xi,\la)b_4(\xi,\la)}{a(\xi,\la)}\int_0^\xi
      \frac{v_1(t-\om\tau+\om A(\eta,\xi,\la),\eta)-v_2(t-\om\tau+\om A(\eta,\xi,\la),\eta)}{a(\eta,\la)}d\eta\right)d\xi,
  \end{eqnarray*}
  where
  \begin{eqnarray}
    &&\int_0^x\int_0^\xi\frac{c_1(x,\xi,\la)b_3(\xi,\la)v_1(t+\om A(\eta,\xi,\la),\eta)}{a(\xi,\la)a(\eta,\la)}\,d\eta d\xi\nonumber\\
    &&=\int_0^x\int_\eta^x\frac{c_1(x,\xi,\la)b_3(\xi,\la)v_1(t+\om A(\eta,\xi,\la),\eta)}{a(\xi,\la)a(\eta,\la)}\,d\xi d\eta\nonumber\\
    &&=-\frac{1}{\om}\int_0^x\int_t^{t+\om A(\eta,x,\la)}\frac{c_1(x,\xi_{\eta,t,\om,\la}(\zeta),\la)b_3(\xi_{\eta,t,\om,\la}(\zeta),\la)
       v_1(\zeta,\eta)}{a(\eta,\la)}\,d\zeta d\eta.
        \label{change}
 \end{eqnarray}
 Here we changed the integration variable $\xi$ to a new integration variable
$$
\zeta=\zeta_{\eta,t,\om,\la}(\xi):= t+\om A(\eta,\xi,\la)=t+\om\int_\xi^\eta\frac{dz}{a(z,\la)},\;
d\zeta=-\frac{\om}{a(\xi,\la)}\,d\xi.
$$
Note that for   $\om \ne 0$ the inverse transformation $\xi=\xi_{\eta,t,\om,\la}(\zeta)$ exists and depends smoothly on
$\eta,t,\om,\la$ and $\zeta$.

Obviously, the absolute values of the partial derivatives of  \reff{change} with respect to $t$ and $x$ 
exist and 
can be estimated from above by
a constant times $\|v\|_\infty$.
Moreover, as long as $\om$ and $\la$ are varying in the ranges
$1/c\le\om \le c$ and $|\la| \le c$,
the constant may be chosen to be independent on 
$\om,\tau$ and $\la$ (and to depend on $c$ only).
The same
can be shown 
for the terms
$$
\int_0^x\int_0^\xi\frac{c_1(x,\xi,\la)b_3(\xi,\la)v_2(t+\om A(\eta,\xi,\la),\eta)}{a(\xi,\la)a(\eta,\la)}\,d\eta d\xi
$$
and
$$
\int_0^x\int_0^\xi\frac{c_1(x,\xi,\la)b_4(\xi,\la)v_j(t-\om\tau+\om A(\eta,\xi,\la),\eta)}{a(\xi,\la)a(\eta,\la)}\,d\eta d\xi,
j=1,2.
$$
Claim \ref{Jlemm} is therefore  proved for the first component
$\DD_1(\om,\la)\J(\om,\tau,\la)$.
The same argument applies to  the second component $\DD_2(\om,\la)\J(\om,\tau,\la)$.
\end{subproof}
\begin{claim}\label{DKDlemm}
  For all $\om,\tau,\la \in \R$ with  $\om \ne 0$ and all $v \in C_{2\pi}(\R \times [0,1];\R^2)$
  we have $\DD(\om,\la)\KK(\la)\DD(\om,\la)v \in C^1_{2\pi}(\R \times [0,1];\R^2)$, and for any $c>0$ it holds
  \beq
\label{sup2}
  \sup_{1/c \le \om \le c, \tau \in \R, |\la| \le c}\{\|\partial_t\DD(\om,\la)\KK(\la)\DD(\om,\la)v\|_\infty+\|\partial_x\DD(\om,\la)\KK(\la)\DD(\om,\la)v\|_\infty:
  \;\|v\|_\infty \le 1\}<\infty.
  \ee
\end{claim}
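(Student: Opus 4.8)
The plan is to reduce this claim to the same type of estimate that was established in Claim~\ref{Jlemm}, by exploiting the special structure \reff{K12def} of the operator $\KK(\la)$: its diagonal part vanishes, so $[\KK_1(\la)v](t,x)=b_2(x,\la)v_2(t,x)$ depends only on the second component of $v$, and $[\KK_2(\la)v](t,x)=b_1(x,\la)v_1(t,x)$ only on the first. The key observation is that the composition $\DD(\om,\la)\KK(\la)\DD(\om,\la)$, when written out explicitly using the definitions \reff{Ddef} and \reff{K12def}, becomes a \emph{double} integral over $[0,1]$ of the values of a single component of $v$, evaluated along a characteristic curve whose time-argument depends on the inner integration variable. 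Concretely, for the first component one gets, up to smooth coefficient factors,
$$
[\DD_1(\om,\la)\KK(\la)\DD(\om,\la)v](t,x)
= \int_0^x\!\!\int_x^\xi \text{(smooth kernel)}\,v_2\bigl(t+\om A(x,\xi,\la)-\om A(\xi,\eta,\la),\eta\bigr)\,d\eta\,d\xi,
$$
i.e., the intermediate ``pointwise'' operator $\KK(\la)$ does not spoil the integral structure, because each of the two surrounding $\DD$'s contributes one genuine integration. (One has to be a little careful with the orientation of the inner integral coming from $\DD_2$, which runs from $\xi$ to $1$, but this is only bookkeeping.)

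First I would write out $\DD_j(\om,\la)\KK(\la)\DD(\om,\la)v$ component by component, substituting \reff{Ddef}, \reff{K12def}, \reff{Ddef} in turn, and collect the result as an iterated integral $\int_0^x\int\cdots\,d\eta\,d\xi$ with a $C^\infty$ kernel in $(x,\xi,\eta,\la)$ multiplied by $v_k(t+\om\phi(x,\xi,\eta,\la),\eta)$ for a single index $k$ and a smooth phase function $\phi$ which, crucially, is \emph{strictly monotone} in $\xi$ for $\om\neq0$ (being a sum/difference of terms of the form $\om A(\cdot,\cdot,\la)$, each of which has nonvanishing $\xi$-derivative $\pm\om/a$). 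Then, exactly as in \reff{change}, I would perform the change of variables $\zeta=t+\om\phi(x,\xi,\eta,\la)$ in the $\xi$-integral, with $d\zeta = \om\,\partial_\xi\phi\,d\xi$ and $|\partial_\xi\phi|$ bounded below by a positive constant on the range $1/c\le\om\le c$, $|\la|\le c$; the inverse $\xi=\xi(\zeta,\eta,t,\om,\la)$ exists and is jointly smooth. After this substitution the expression becomes $\frac{1}{\om}\int_0^x\int_{\text{(bounds)}} (\text{smooth})\,v_k(\zeta,\eta)\,d\zeta\,d\eta$, an honest integral operator against an $L^\infty$ function with a $C^1$ (indeed $C^\infty$) kernel and $C^1$ integration limits.

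From this representation the conclusion follows by differentiating under the integral sign: $\partial_t$ hits only the (smooth) upper/lower limits of the $\zeta$-integral and the kernel through $t$, producing again bounded integrals of $v_k$; $\partial_x$ hits the limits, the kernel, and the inverse-transformation data, all smoothly. Hence $\DD(\om,\la)\KK(\la)\DD(\om,\la)v\in C^1_{2\pi}(\R\times[0,1];\R^2)$, and all the resulting terms are bounded by a constant times $\|v\|_\infty$, with the constant uniform for $1/c\le\om\le c$, $\tau\in\R$ (there is no $\tau$-dependence at all here, since $\KK$ and $\DD$ do not involve $\tau$), $|\la|\le c$. This gives \reff{sup2}. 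The main obstacle — and the only genuinely delicate point — is verifying that the composite phase $\phi$ really is monotone in the variable over which one wants to substitute, so that the change of variables is legitimate and its Jacobian is bounded away from zero uniformly; once the phase is written in terms of the $A$-functions this is immediate, but it must be checked that no cancellation of the two $\om A$-contributions occurs, which is the reason one substitutes in $\xi$ (the outer $\DD$-variable) rather than in $\eta$.
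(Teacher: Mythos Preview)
Your approach is correct and essentially identical to the paper's: write the composition as a double integral, observe that the composite phase $t+\om A(x,\xi,\la)-\om A(\xi,\eta,\la)$ is strictly monotone in $\xi$ with $\partial_\xi\phi=-2/a(\xi,\la)$ (precisely because $\KK$ is off-diagonal, so the two $A$-contributions reinforce rather than cancel---this is exactly the point of Remark~\ref{diago}), change variable $\xi\mapsto\zeta$, and differentiate. The paper carries out one extra bookkeeping step you leave implicit---swapping the order of integration via Fubini before substituting in the inner variable---and your displayed inner limits $\int_x^\xi$ should read $\int_\xi^1$ (as you yourself note in the parenthetical), but these are cosmetic.
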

\begin{subproof}
  The proof is similar to the proof of Claim \ref{Jlemm}.
  We have
  \begin{eqnarray*}
    &&[\DD_1(\om,\la)\KK(\la)\DD(\om,\la)v](t,x)\\
    &&=
    -\int_0^x\int_0^\xi\frac{c_1(x,\xi,\la)c_2(\xi,\eta,\la)b_2(\xi,\la)}{a(\xi,\la)a(\eta,\la)}
       v_2(t+\om A(x,\xi,\la)-\om A(\xi,\eta,\la),\eta)\,d\eta d\xi\\
  &&=
  -\int_0^x\int_\eta^x\frac{c_1(x,\xi,\la)c_2(\xi,\eta,\la)b_2(\xi,\la)}{a(\xi,\la)a(\eta,\la)}
     v_2(t+\om A(x,\xi,\la)-\om A(\xi,\eta,\la),\eta)\,d\xi d\eta\\
    &&  =
    \frac{1}{2\om}\int_0^x\int_{t+\om A(x,\eta,\la)}^{t-\om A(x,\eta,\la)}
       \frac{c_1(x,\xi_{\eta,t,\om,\la}(\zeta),\la)c_2(\xi_{\eta,t,\om,\la}(\zeta),\eta,\la)
b_1(\xi_{\eta,t,\om,\la}(\zeta),\la)}{a(\eta,\la)}
       v_2(\zeta,\eta)\,d\zeta d\eta.
\end{eqnarray*}
Here we changed the integration variable $\xi$ to 
$$
\zeta=\zeta_{\eta,t,\om,\la}(\xi):= t+\om(A(x,\xi,\la)-A(\xi,\eta,\la))=t+\om\left(\int_\xi^x\frac{dz}{a(z,\la)}+
  \int_\xi^\eta\frac{dz}{a(z,\la)}\right),\;
d\zeta=-\frac{2\om}{a(\xi,\la)}d\xi,
$$
and denoted by $\xi=\xi_{\eta,t,\om,\la}(\zeta)$  the inverse transformation.
Now we proceed as in the proof of Claim \ref{DKDlemm}.
\end{subproof}
\begin{rem}
  \label{diago}
  In the proof of Claim \ref{DKDlemm} we used that the diagonal part of the operator $\KK(\la)$ vanishes.
  Indeed, if in place of \reff{K12def} we would have, for example,
  $
  [\KK_1(\la)v](t,x)=v_1(t,x)+b_2(x,\la)v_2(t,x),
    $
    then in $[\DD_1(\om,\la)\KK(\la)\DD(\om,\la)v](t,x)$ there would appear the additional summand
    $$
  -\int_0^x\int_\eta^x\frac{c_1(x,\xi,\la)c_2(\xi,\eta,\la)}{a(\xi,\la)a(\eta,\la)}
  v_1(t+\om A(x,\xi,\la)+\om A(\xi,\eta,\la),\eta)\,d\xi d\eta.
  $$
  Because of $A(x,\xi,\la)+A(\xi,\eta,\la)=A(x,\eta,\la)$ this equals to
  $$
 -\int_0^x\int_\eta^x\frac{c_1(x,\xi,\la)c_2(\xi,\eta,\la)}{a(\xi,\la)a(\eta,\la)}
   v_1(t+\om A(x,\eta,\la),\eta)\,d\xi d\eta,
   $$
   and this is not differentiable with respect to $t$, in general, if $v_1$  is not differentiable with respect to $t$.
 \end{rem}
\begin{claim}\label{DKClemm}
  For all $\om,\tau,\la \in \R$ with  $\om \ne 0$ and all $v \in C_{2\pi}(\R \times [0,1];\R^2)$
  we have $\DD(\om,\la)\KK(\la)\CC(\om,\la)v \in C^1_{2\pi}(\R \times [0,1];\R^2)$, and for any $c>0$ it holds
  \beq
\label{sup3}
  \sup_{1/c \le \om \le c, \tau \in \R, |\la| \le c}\{\|\partial_t\DD(\om,\la)\KK(\la)\CC(\om,\la)v\|_\infty+\|\partial_x\DD(\om,\la)\KK(\la)\CC(\om,\la)v\|_\infty:
  \;\|v\|_\infty \le 1\}<\infty.
  \ee
\end{claim}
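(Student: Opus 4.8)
The proof proceeds exactly along the lines of the proofs of Claims \ref{Jlemm} and \ref{DKDlemm}: one writes the composition $\DD(\om,\la)\KK(\la)\CC(\om,\la)$ out explicitly, observes that it is an integral operator whose kernel is $C^\infty$-smooth in all its arguments and in which the only non-smooth ingredient, a single component of $v$, appears with a phase that is \emph{strictly monotone} in the integration variable, and then one kills the $\om$-singularity by changing that integration variable. Note first that neither $\CC$ nor $\KK$ nor $\DD$ depends on $\tau$, so the supremum over $\tau\in\R$ in \reff{sup3} is vacuous, and it is enough to produce bounds uniform for $1/c\le\om\le c$ and $|\la|\le c$ (the mere membership in $C^1_{2\pi}$ needs only $\om\ne 0$).

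Concretely, combining \reff{Cdef}, \reff{K12def} and \reff{Ddef}, the first component of the composition is
\[
[\DD_1(\om,\la)\KK(\la)\CC(\om,\la)v](t,x)
=-\int_0^x\frac{c_1(x,\xi,\la)\,b_2(\xi,\la)\,c_2(\xi,1,\la)}{a(\xi,\la)}\,
v_1\bigl(t+\om A(x,\xi,\la)-\om A(\xi,1,\la),1\bigr)\,d\xi ,
\]
and the second component is, analogously, a single integral over $[x,1]$ in which $v_2(\,\cdot\,,0)$ appears with the phase $t-\om A(x,\xi,\la)+\om A(\xi,0,\la)$. The crucial point — the counterpart of the cancellation discussed in Remark \ref{diago} — is that in both phases the two occurrences of $A$ enter with the \emph{same} sign in $\xi$: since $\d_\xi A(x,\xi,\la)=-1/a(\xi,\la)$ and $\d_\xi A(\xi,1,\la)=1/a(\xi,\la)$, the phase $A(x,\xi,\la)-A(\xi,1,\la)$ has $\xi$-derivative $-2/a(\xi,\la)\ne 0$; hence for $\om\ne 0$ the map $\xi\mapsto\zeta:=t+\om\bigl(A(x,\xi,\la)-A(\xi,1,\la)\bigr)$ is a $C^\infty$-diffeomorphism onto its image, with $d\zeta=-(2\om/a(\xi,\la))\,d\xi$ and with inverse $\xi=\xi_{t,x,\om,\la}(\zeta)$ depending smoothly on all parameters.

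After this substitution the first integral becomes $\frac{1}{2\om}\int$ of a kernel that is $C^\infty$ in $x,t,\om,\la,\zeta$ against $v_1(\zeta,1)$, over an interval whose endpoints (the images of $\xi=0$ and $\xi=x$) are smooth functions of $t,x,\om,\la$; consequently $\d_t$ and $\d_x$ now fall only on the smooth kernel and on the limits of integration, never on $v_1$, so the result lies in $C^1_{2\pi}(\R\times[0,1];\R^2)$ and its $C^1$-norm is bounded by a constant times $\|v\|_\infty$. That constant involves only sup-norms of $c_1,c_2,b_1,b_2,1/a$ and of finitely many of their derivatives together with the factor $1/\om$, all of which stay bounded for $1/c\le\om\le c$ and $|\la|\le c$, giving \reff{sup3}. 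The second component is treated identically, now using $\d_\xi\bigl(-A(x,\xi,\la)+A(\xi,0,\la)\bigr)=2/a(\xi,\la)\ne 0$. I do not expect any genuine obstacle here beyond bookkeeping; the one thing that must be verified with care is precisely this strict monotonicity of the phase in $\xi$ — the place where the boundary reflection built into $\CC$ and the vanishing diagonal of $\KK$ cooperate, exactly as in Remark \ref{diago}.
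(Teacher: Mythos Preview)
Your proposal is correct and follows essentially the same approach as the paper: write the composition explicitly as a single integral in $\xi$, observe that the phase $A(x,\xi,\la)-A(\xi,1,\la)$ has $\xi$-derivative $-2/a(\xi,\la)\ne 0$, change variables to $\zeta$, and then differentiate the resulting integral with smooth kernel and smooth limits. You are in fact slightly more careful than the paper, both in noting that the supremum over $\tau$ is vacuous and in writing the correct coefficient $b_2$ in the first component (the paper's displayed formula has $b_1$, which appears to be a typo).
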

\begin{subproof}
  We have
  \begin{eqnarray*}
    &&[\DD_1(\om,\la)\KK(\la)\CC(\om,\la)v](t,x)\\
    &&=-
    %\frac{1}{2}
    \int_0^x\frac{c_1(x,\xi,\la)c_2(\xi,1,\la)b_1(\xi,\la)}{a(\xi,\la)}
       v_1(t+\om A(x,\xi,\la)-\om A(\xi,1,\la),1)d\xi\\
    &&=\frac{1}{2\om}\int_{t+\om A(x,0,\la)-\om A(0,1,\la)}^{t-\om A(x,1,\la)}c_1(x,\xi_{t,\om,\la}(\zeta),\la)c_2(\xi_{t,\om,\la}(\zeta),1,\la)
       b_1(\xi_{t,\om,\la}(\zeta),\la)
       v_1(\zeta,1)d\zeta.
\end{eqnarray*}
Here we changed the integration variable $\xi$ to 
$$
\zeta=\zeta_{t,\om,\la}(\xi):= t+\om A(x,\xi,\la)-\om A(\xi,1,\la)=t+\om\int_\xi^x\frac{dz}{a(z,\la)}+\om\int_\xi^1\frac{dz}{a(z,\la)},\;
d\zeta=-\frac{2\om}{a(\xi,\la)}d\xi,
$$
and $\xi=\xi_{t,\om,\la}(\zeta)$ is the inverse transformation.
Again, now we can proceed as in the proof of Claim \ref{Jlemm}.
\end{subproof}
\begin{claim}\label{C1}
Let the condition \reff{Fred}  be fulfilled. Then there exists $\delta>0$ 
such that for all $\om,\la \in \R$ with $|\la|\le \delta$
the operator  $I-\CC(\om,\la)$ is an  isomorphism from $C_{2\pi}(\R \times [0,1];\R^2)$ to itself. Moreover, 
\beq
\label{I-C-1}
\sup_{\om \in \R, |\la|\le \delta} \left\{\|(I-\CC(\om,\la))^{-1}f\|_\infty:\; \|f\|_\infty \le 1\right\}<\infty. 
\ee
\end{claim}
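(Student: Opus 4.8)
The plan is to solve the equation $(I-\CC(\om,\la))v=f$ explicitly and reduce it to a scalar difference equation on the time-circle whose solvability is governed precisely by \reff{Fred}.

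First I would read off from \reff{Cdef} what $v=f+\CC(\om,\la)v$ means componentwise: $v_1(t,x)=f_1(t,x)-c_1(x,0,\la)v_2(t+\om A(x,0,\la),0)$ and $v_2(t,x)=f_2(t,x)+c_2(x,1,\la)v_1(t-\om A(x,1,\la),1)$, so that $v$ is completely determined by the two boundary traces $p(t):=v_1(t,1)$ and $q(t):=v_2(t,0)$. Evaluating the first identity at $x=1$ and the second at $x=0$, and writing $T(\la):=A(1,0,\la)=-A(0,1,\la)=\int_0^1 d\eta/a(\eta,\la)>0$, one gets $p(t)=f_1(t,1)-c_1(1,0,\la)q(t+\om T(\la))$ and $q(t)=f_2(t,0)+c_2(0,1,\la)p(t+\om T(\la))$. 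Eliminating $q$ yields
$$p(t)+\kappa(\la)\,p(t+2\om T(\la))=g(t),\qquad g(t):=f_1(t,1)-c_1(1,0,\la)f_2(t+\om T(\la),0),$$
with $\kappa(\la):=c_1(1,0,\la)c_2(0,1,\la)$, where $\|g\|_\infty\le C\|f\|_\infty$ uniformly for $|\la|\le\delta$.

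The point now is the identity $b_1(x,\la)+b_2(x,\la)=b_5(x,\la)$, immediate from \reff{b1def}, which by the definitions of $c_1$ and $c_2$ gives $\kappa(\la)=\exp\left(-\int_0^1 b_5(\eta,\la)/a(\eta,\la)\,d\eta\right)$. This is positive, smooth in $\la$, independent of $\om$, and $\kappa(0)\ne 1$ precisely by \reff{Fred} (recall $b_5(\cdot,0)=b_5^0$); hence there are $\delta>0$ and $c_0>0$ with $|1-\kappa(\la)|\ge c_0$ for $|\la|\le\delta$. For such $\la$ the difference equation has a unique solution in $C_{2\pi}(\R)$: if $\kappa(\la)<1$ one takes the uniformly convergent Neumann-type series $p(t)=\sum_{n\ge0}(-\kappa(\la))^n g(t+2n\om T(\la))$, and if $\kappa(\la)>1$ one instead iterates the relation solved for the shifted argument, obtaining $p(t)=\sum_{n\ge1}(-1)^{n-1}\kappa(\la)^{-n}g(t-2n\om T(\la))$; in both cases $\|p\|_\infty\le|1-\kappa(\la)|^{-1}\|g\|_\infty\le c_0^{-1}C\|f\|_\infty$, uniformly in $\om\in\R$. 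Uniqueness at this step is the same contraction estimate applied to $g\equiv 0$.

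Finally I would close the loop: from $p$ define $q$ by $q(t)=f_2(t,0)+c_2(0,1,\la)p(t+\om T(\la))$ and then $v_1,v_2$ by the two explicit formulas above; these belong to $C_{2\pi}(\R\times[0,1];\R^2)$ with $\|v\|_\infty\le C'\|f\|_\infty$, and substituting back one checks that the trace identities $v_1(\cdot,1)=p$ and $v_2(\cdot,0)=q$ hold — this is exactly the difference equation — so $(I-\CC(\om,\la))v=f$. Taking $f=0$ forces $g\equiv0$, hence $p\equiv q\equiv 0$ and $v=0$, so $I-\CC(\om,\la)$ is injective; together with the construction it is a bijection, and the estimates give $\|(I-\CC(\om,\la))^{-1}f\|_\infty\le C'\|f\|_\infty$, which is \reff{I-C-1}. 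I expect no genuine obstacle here: the only thing to observe is that \reff{Fred} is exactly the non-resonance condition $\kappa(\la)\ne1$, and that $\kappa(\la)$ does not depend on $\om$, which is what makes the bound $\om$-uniform; there is no small-divisor or loss-of-derivatives difficulty at this stage precisely because $|\kappa(\la)|\ne 1$.
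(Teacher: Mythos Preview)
Your proof is correct and follows essentially the same route as the paper: reduce $(I-\CC(\om,\la))v=f$ to a scalar shift equation on $C_{2\pi}(\R)$ for a boundary trace, observe that the governing constant $\kappa(\la)=c_1(1,0,\la)c_2(0,1,\la)=\exp\bigl(-\int_0^1 b_5/a\bigr)$ is independent of $\om$ and differs from $1$ by \reff{Fred}, and invert by a Neumann series in the two cases $\kappa\lessgtr 1$. The only cosmetic difference is that you eliminate to an equation for $p=v_1(\cdot,1)$ while the paper works with $v_2(\cdot,0)$; the estimates and the $\om$-uniformity are identical.
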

\begin{subproof} Take $f \in C_{2\pi}(\R \times [0,1];\R^2)$. We have to show that  for all real numbers $\om$ and $\la$  with $\la \approx 0$
  there exists a unique function  $v \in C_{2\pi}(\R \times [0,1];\R^2)$ 
  satisfying the equation
  \beq
  \label{1a}
  (I-\CC(\om,\la))v=f
  \ee
  and that $\|v\|_\infty \le \mbox{const}\|f\|_\infty$, where the constant does not depend on $\om,\la$ and $f$.
 Equation \reff{1a} is satisfied if and only if for all $t \in \R$ and $x \in [0,1]$
  it holds
  \begin{eqnarray}
    \label{2a}
    v_1(t,x)&=&-c_1(x,0,\la)v_2(t+\om A(x,0,\la),0)+f_1(t,x),\\
     \label{3a}
    v_2(t,x)&=&c_2(x,0,\la)v_1(t-\om A(x,1,\la),1)+f_2(t,x).
    \end{eqnarray}
 System \reff{2a}, \reff{3a} is satisfied if and only if \reff{2a} is true and
 if it holds
  \begin{eqnarray}
     \label{4a}
    v_2(t,x)&=&c_2(x,1,\la)(-c_1(1,0,\la)v_2(t+\om (A(1,0,\la)-A(x,1,\la)),0)+f_1(t-\om A(x,1,\la),x))\nonumber\\
    &&+f_2(t,x),
  \end{eqnarray}
  i.e., if and only if  \reff{2a} and \reff{4a} are true and if
   \begin{eqnarray}
     \label{5a}
    v_2(t,0)&=&c_2(0,1,\la)(-c_1(1,0,\la)v_2(t+\om (A(1,0,\la)-A(0,1,\la)),0)+f_1(t-\om A(0,1,\la),0))\nonumber\\
    &&+f_2(t,0).
   \end{eqnarray}

   Equation \reff{5a} is a functional equation for the unknown function $v_2(\cdot,0)$. In order to solve this equation let us denote
   by $C_{2\pi}(\R)$ the Banach space of all $2\pi$-periodic continuous functions $\tilde{v}:\R \to\R$ with the norm
   $\|\tilde{v}\|_\infty:=\max\{|\tilde{v}(t)|: \; t \in \R\}$.  Equation \reff{5a} is an equation in  $C_{2\pi}(\R)$ of the type
   \beq
   \label{6a}
   (I-\tilde{\CC}(\om,\la))\tilde{v}=\tilde{f}(\om,\la)
   \ee
   with $\tilde{v},\tilde{f}\in C_{2\pi}(\R)$ defined by $\tilde{v}(t):=v_2(t,0)$ and
   \beq
   \label{fdef}
   [\tilde{f}(\om,\la)](t):=c_2(0,1,\la)f_1(t-\om A(0,1,\la),x)+f_2(t,0)
   \ee
   and with $\tilde{\CC}(\om,\la)\in {\cal L}(C_{2\pi}(\R))$ defined by
   \beq
   \label{tildedef}
   [\tilde{\CC}(\om,\la)\tilde{v}](t):=-c_1(1,0,\la)c_2(0,1,\la)\tilde{v}(t+\om (A(1,0,\la)-A(0,1,\la))).
   \ee
   From the definitions of the functions $c_1$ and $c_2$ it follows that
   $$
   c_1(1,0,\la)c_2(0,1,\la)=\exp \int_0^1\frac{b_5(x,\la)}{a(x,\la)}dx,
   $$
   and assumption \reff{Fred} yields
   $$
   c_0:= c_1(1,0,0)c_2(0,1,0)\not=1.
   $$
   Now, we distinguish two cases.

   {\it Case 1:
     $c_0<1$.} Then there exists $\delta>0$ such that for all  $\la \in [-\delta,\delta]$
   it holds
   $c_1(1,0,\la)c_2(0,1,\la)\le \frac{1+c_0}{2}<1$.
   Therefore
   $$
   \|\tilde{\CC}(\om,\la)\|_{ {\cal L}(C_{2\pi}(\R))}\le \frac{1+c_0}{2}<1 \mbox{ for all } \la \in [-\delta,\delta].
   $$
   Hence,  for all  $\la \in [-\delta,\delta]$ the operator $I-\tilde{\CC}(\om,\la)$ is an isomorphism from $C_{2\pi}(\R)$ to itself, and
   $$
   \|(I-\tilde{\CC}(\om,\la))^{-1}\|_{ {\cal L}(C_{2\pi}(\R))}\le \frac{1}{1-\frac{1+c_0}{2}}=\frac{2}{1-c_0}.
   $$
   Therefore, for all $\om,\la \in \R$ with $|\la| \le \delta$ there exists exactly one solution $v_2(\cdot,0)\in C_{2\pi}(\R)$
   to \reff{5a}, and
   $$
   \|v_2(\cdot,0)\|_\infty \le \mbox{const}\|\tilde{f}(\om,\la)\|_\infty\le \mbox{const}\|f\|_\infty,
   $$
   where the constants do not depend on $\om,\la$ and $f$.
   Inserting this solution into the right-hand side of \reff{4a} we get $v_2 \in  C_{2\pi}(\R\times [0,1])$,
   and  inserting this  into the right-hand side of \reff{2a} we get finally $v_1 \in  C_{2\pi}(\R\times [0,1])$,
   i.e. the unique solution $v=(v_1,v_2) \in  C_{2\pi}(\R\times [0,1];\R^2)$ to \reff{2a}, \reff{3a} such that
   $\|v\|_\infty \le \mbox{const}\|f\|_\infty$,   where the constant does not depend on $\om,\la$ and $f$.

   {\it Case 2: $c_0>1.$} 
Then there exists $\delta>0$ such that for all  $\la \in [-\delta,\delta]$
   it holds
   $c_1(1,0,\la)c_2(0,1,\la)\ge \frac{1+c_0}{2}>1$.
   Equation \reff{5a} is equivalent to
 \begin{eqnarray*}
   v_2(t,0)&=&\frac{v_2(t+\om (A(0,1,\la)-A(1,0,\la)),0)}{c_1(1,0,\la)c_2(0,1,\la)}\nonumber\\
   &&-\frac{f_1(t-\om A(1,0,\la),1))}{c_1(1,0,\la)}-\frac{f_2(t+\om(A(0,1,\la)-A(1,0,\la)),0)}{c_1(1,0,\la)c_2(0,1,\la)}.
 \end{eqnarray*}
 This  equation is again of the type \reff{6a}, but now with $\|\widetilde{\CC}(1,0)\|_{ {\cal L}(C_{2\pi}(\R))}\le1/c_0$.
 Hence, there exists $\delta>0$ such that
   $$
   \|\widetilde{\CC}(\om,\la)\|_{ {\cal L}(C_{2\pi}(\R))}\le \frac{2}{1+c_0}<1 \mbox{ for all } \la \in [-\delta,\delta].
   $$
   we can, therefore, proceed as in the case  $c_0<1$.
 \end{subproof}
 \begin{rem}
   \label{c1}
   Definition \reff{tildedef} implies that
   $\frac{d}{dt}\widetilde{\CC}(\om,\la)\tilde{v}=\widetilde{\CC}(\om,\la)\frac{d}{dt}\tilde{v}$ for all $\tilde{v}\in C^1_{2\pi}(\R)$. This yields the estimate
   $$
   \left\|\widetilde{\CC}(\om,\la)\tilde{v}\right\|_\infty+\left\|\frac{d}{dt}\widetilde{\CC}(\om,\la)\tilde{v}\right\|_\infty
   \le\left\|\widetilde{\CC}(\om,\la)\right\|_{{\cal L}( C_{2\pi}(\R))}(\|\tilde{v}\|_\infty+\|\tilde{v}'\|_\infty)
   \mbox{ for all } \tilde{v} \in C^1_{2\pi}(\R).
   $$
   Hence, $(I-\widetilde{\CC}(\om,\la))^{-1}$ is a linear bounded operator from $C^1_{2\pi}(\R)$ into   $C^1_{2\pi}(\R)$ for $\la \approx 0$.
   It follows that, for given $f \in C^1_{2\pi}(\R \times [0,1];\R^2)$, the solution $v$ to \reff{1a}  belongs to $C^1_{2\pi}(\R \times [0,1];\R^2)$
   and, moreover,
   \beq
   \label{Cdelta}
   \sup_{\om \in \R, |\la| \le \delta}\|(I-\CC(\om,\la))^{-1}\|_{{\cal L}(C^1_{2\pi}(\R \times [0,1];\R^2))}<\infty.
   \ee
   \end{rem}

Let us turn back to Fredholmness of the operator 
$I-\CC(\om,\la)-\DD(\om,\la)(\J(\om,\tau,\la)+\KK(\la))$
 from the space
 $C_{2\pi}(\R \times [0,1];\R^2)$ into itself for $\om \ne 0$ and $\la \approx 0$.
 Note that the space  $C^1_{2\pi}(\R \times [0,1];\R^2)$ is completely continuously embedded into the space
 $C_{2\pi}(\R \times [0,1];\R^2)$. By Claim \ref{Jlemm},  for 
 given $\om \ne 0$, the operator
 $\DD(\om,\la)\J(\om,\tau,\la)$ is  completely continuous from  $C_{2\pi}(\R \times [0,1];\R^2)$ into itself.
 Therefore, it remains to show that for $\om \ne 0$ and $\la \approx 0$
 the operator
 $I-\CC(\om,\la)-\DD(\om,\la)\KK(\la)$ is  Fredholm  of index zero from
 $C_{2\pi}(\R \times [0,1];\R^2)$ into itself. By Claim \ref{C1}, this is true
 whenever  
 the operator 
 $I-(I-\CC(\om,\la))^{-1}\DD(\om,\la)\KK(\la)$
 is  Fredholm  of index zero from
 $C_{2\pi}(\R \times [0,1];\R^2)$ into itself, for $\om \ne 0$ and $\la \approx 0$.
 For that we use the following Fredholmness criterion of S. M. Nikolskii (cf. e.g.  \cite[Theorem XIII.5.2]{Kant}):
 \begin{thm}
   \label{Niko}
   Let $U$ be a Banach space  and $K \in {\cal L}(U)$ be an operator  such that $K^2$ is completely continuous. Then
   the operator $I-K$ is Fredholm of index zero.
 \end{thm}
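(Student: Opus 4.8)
The plan is to reduce the statement to the classical Riesz--Schauder theory for completely continuous perturbations of the identity, exploiting the factorisation
$I-K^2=(I-K)(I+K)=(I+K)(I-K)$.

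First I would establish that $I-K$ is Fredholm. The quickest route is via Atkinson's theorem: an operator $T\in{\cal L}(U)$ is Fredholm if and only if its image in the Calkin algebra ${\cal L}(U)/{\cal K}(U)$ is invertible, where ${\cal K}(U)$ is the ideal of completely continuous operators. Writing $\bar K$ for the image of $K$ in the Calkin algebra, the hypothesis that $K^2$ is completely continuous says exactly $\bar K^2=0$, whence $(1-\bar K)(1+\bar K)=(1+\bar K)(1-\bar K)=1-\bar K^2=1$; thus $1-\bar K$ is invertible with inverse $1+\bar K$, and Atkinson's theorem gives that $I-K$ (and likewise $I+K$) is Fredholm. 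An equivalent elementary argument avoids the Calkin algebra: since $K^2$ is completely continuous, $I-K^2$ is Fredholm of index zero by Riesz--Schauder; then $\ker(I-K)\subseteq\ker\big((I-K)(I+K)\big)=\ker(I-K^2)$ is finite-dimensional, and $\im(I-K)\supseteq\im\big((I-K)(I+K)\big)=\im(I-K^2)$, which is closed and of finite codimension in $U$. A linear subspace lying between a closed finite-codimensional subspace and $U$ is itself closed and of finite codimension, so $\im(I-K)$ is closed of finite codimension, and $I-K$ is Fredholm.

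It remains to compute the index. Here I would invoke the homotopy invariance of the Fredholm index. For every $t\in[0,1]$ the operator $I-tK$ is Fredholm, because $(tK)^2=t^2K^2$ is completely continuous and the preceding step applies verbatim with $K$ replaced by $tK$; moreover $t\mapsto I-tK$ is continuous in the norm of ${\cal L}(U)$. Since the integer-valued index is locally constant along a norm-continuous family of Fredholm operators, it is constant on $[0,1]$, so $\ind(I-K)=\ind(I-0\cdot K)=\ind I=0$. (One can also note from $(I-K)(I+K)=I-K^2$ and additivity of the index that $\ind(I-K)+\ind(I+K)=0$, but splitting this still requires the homotopy argument or a symmetry input.)

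There is no genuine obstacle: this is a classical lemma, and the only points needing slight care are the closed-range claim for $I-K$ in the elementary version of the argument (handled by the ``sandwich'' position of $\im(I-K)$ between $\im(I-K^2)$ and $U$) and the use of homotopy invariance of the index, rather than the bare factorisation, for the index-zero conclusion. All the ingredients---Riesz--Schauder / Atkinson's theorem and homotopy invariance of the Fredholm index---are standard and can be quoted, e.g.\ from the same source \cite{Kant} already cited above.
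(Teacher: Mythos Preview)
Your argument is correct; both the Calkin-algebra route and the elementary sandwich argument for Fredholmness are valid, and the homotopy $t\mapsto I-tK$ cleanly yields index zero. Note, however, that the paper does not prove this theorem at all: it is quoted as a classical criterion of S.~M.~Nikolskii, with a reference to \cite[Theorem XIII.5.2]{Kant}, and used as a black box. So there is no ``paper's own proof'' to compare against---you have simply supplied a (standard and complete) proof of a result the authors chose to cite rather than reprove.
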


 On the account of Theorem \ref{Niko}, it remains to prove the 
 following statement.
\begin{claim}
For given $\om \ne 0$ and $\la \approx 0$,
the operator $ ((I-\CC(\om,\la))^{-1}\DD(\om,\la)\KK(\la))^2$
is completely continuous from $C_{2\pi}(\R \times [0,1];\R^2)$ into itself.	
\end{claim}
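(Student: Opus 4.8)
The plan is to show that, with $T:=(I-\CC(\om,\la))^{-1}\DD(\om,\la)\KK(\la)$, the operator $T^2$ maps every bounded subset of $C_{2\pi}(\R\times[0,1];\R^2)$ into a bounded subset of $C^1_{2\pi}(\R\times[0,1];\R^2)$. Since $C^1_{2\pi}(\R\times[0,1];\R^2)$ is completely continuously embedded into $C_{2\pi}(\R\times[0,1];\R^2)$ (Arzel\`a--Ascoli), this immediately yields complete continuity of $T^2$. Throughout, $\om\ne0$ and $\la$ with $|\la|\le\delta$ are fixed (with $\delta$ from Claim \ref{C1}), and I choose $c>0$ large enough that $1/c\le\om\le c$ and $|\la|\le c$, so that the uniform bounds of Claims \ref{DKDlemm} and \ref{DKClemm} apply.

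First I would use the algebraic identity $(I-\CC(\om,\la))^{-1}=I+\CC(\om,\la)(I-\CC(\om,\la))^{-1}$, applied to the middle factor of
$$
T^2=(I-\CC(\om,\la))^{-1}\,\DD(\om,\la)\KK(\la)\,(I-\CC(\om,\la))^{-1}\,\DD(\om,\la)\KK(\la),
$$
to write the inner operator $\DD(\om,\la)\KK(\la)(I-\CC(\om,\la))^{-1}\DD(\om,\la)\KK(\la)$ as the sum of $\DD(\om,\la)\KK(\la)\DD(\om,\la)\KK(\la)$ and $\DD(\om,\la)\KK(\la)\CC(\om,\la)(I-\CC(\om,\la))^{-1}\DD(\om,\la)\KK(\la)$.

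Then I would treat the two summands separately. The first I would regroup as $\big(\DD(\om,\la)\KK(\la)\DD(\om,\la)\big)\circ\KK(\la)$: since $\KK(\la)$ is bounded on $C_{2\pi}(\R\times[0,1];\R^2)$ (cf.\ \reff{bounded}), Claim \ref{DKDlemm} applied with $\KK(\la)v$ in place of $v$ shows that $\DD(\om,\la)\KK(\la)\DD(\om,\la)\KK(\la)v\in C^1_{2\pi}(\R\times[0,1];\R^2)$ with $\|\partial_t(\,\cdot\,)\|_\infty+\|\partial_x(\,\cdot\,)\|_\infty\le\mbox{const}\,\|\KK(\la)v\|_\infty\le\mbox{const}\,\|v\|_\infty$. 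The second summand I would regroup as $\big(\DD(\om,\la)\KK(\la)\CC(\om,\la)\big)\circ\big((I-\CC(\om,\la))^{-1}\DD(\om,\la)\KK(\la)\big)$: by \reff{bounded} and Claim \ref{C1} the inner factor is bounded on $C_{2\pi}(\R\times[0,1];\R^2)$, and by Claim \ref{DKClemm} the outer factor maps $C_{2\pi}(\R\times[0,1];\R^2)$ boundedly into $C^1_{2\pi}(\R\times[0,1];\R^2)$. Hence $\DD(\om,\la)\KK(\la)(I-\CC(\om,\la))^{-1}\DD(\om,\la)\KK(\la)v$ lies in $C^1_{2\pi}(\R\times[0,1];\R^2)$, with $C^1$-norm bounded by $\mbox{const}\,\|v\|_\infty$.

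Finally I would apply the remaining outer operator $(I-\CC(\om,\la))^{-1}$: by Remark \ref{c1} it maps $C^1_{2\pi}(\R\times[0,1];\R^2)$ into itself boundedly, so $T^2v\in C^1_{2\pi}(\R\times[0,1];\R^2)$ with $\|\partial_tT^2v\|_\infty+\|\partial_xT^2v\|_\infty+\|T^2v\|_\infty\le\mbox{const}\,\|v\|_\infty$, and the compact embedding concludes. I do not expect a genuine obstacle here; the only point requiring care is to arrange the two regroupings so that every "doubled" partial integral operator appearing is of one of the forms $\DD(\om,\la)\KK(\la)\DD(\om,\la)$ or $\DD(\om,\la)\KK(\la)\CC(\om,\la)$ covered by Claims \ref{DKDlemm} and \ref{DKClemm} — in particular so that the non-differentiable composition pointed out in Remark \ref{diago} never arises — and to observe that, although complete continuity is needed only for fixed $\om,\la$, all the constants above are in fact uniform for $1/c\le\om\le c$, $\tau\in\R$ and $|\la|\le\min\{c,\delta\}$.
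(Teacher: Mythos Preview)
Your proposal is correct and follows essentially the same route as the paper: you derive the identity $((I-\CC)^{-1}\DD\KK)^2=(I-\CC)^{-1}\big((\DD\KK)^2+\DD\KK\CC(I-\CC)^{-1}\DD\KK\big)$ via $(I-\CC)^{-1}=I+\CC(I-\CC)^{-1}$, and then invoke Claims \ref{DKDlemm} and \ref{DKClemm} on the two summands together with Remark \ref{c1} for the outer factor. Your write-up simply makes explicit the regroupings and the $C^1$-boundedness that the paper leaves implicit in its one-line appeal to those claims.
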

\begin{subproof}
A straightforward calculation shows that
\beq
\label{Cident}
 ((I-\CC)^{-1}\DD\KK))^2=
 (I-\CC)^{-1}\left((\DD\KK)^2+\DD\KK\CC(I-\CC)^{-1}\DD\KK\right).
 \ee
The desired statement now follows from 
Claims  \ref{DKDlemm} and \ref{DKClemm}.
\end{subproof} 
\begin{rem}
\label{estis}
For proving Lemma \ref{Fredia} we did not need the estimates \reff{sup1}, \reff{sup2}--\reff{I-C-1} and \reff{Cdelta}.
These estimates will be used in the proof of Lemma \ref{4.10} below (more exactly, in the proof of Claim 3 there).
\end{rem}

\subsection{Kernel and image of the linearization}
\label{Kernel and image of the linearization}
This subsection concerns the kernel and the image of the operator
\beq
\label{LLdef}
{\cal L}:=I-\CC-\DD(\J+\KK),
\ee
where
\beq
\label{CDdef}
\CC:=\CC(1,0),\;\DD:=\DD(1,0),\;\J:=\J(1,\tau_0,0), \mbox{ and } \KK:= \KK(0)
\ee
(cf. \reff{J12def}, \reff{K12def}, \reff{Cdef} and  \reff{Ddef}).
From now on we will use assumptions $\bf(A1)$--$\bf(A3)$ and \reff{Fred}
of Theorem \ref{thm:hopf}.
In particular, we will fix a solution $u=u_0\ne 0$ to  \reff{evp}  with $\tau=\tau_0$ and  $\mu=i$ and
a solution $u=u_*\ne 0$ to  \reff{ad} fulfilling assumption $\bf(A3)$
(or, more precisely, \reff{transvid} below).

We will describe the kernel and the image of the operator ${\cal L}$ by means of the eigenfunctions $u_0$ and $u_*$.
To this end, we introduce two  functions $v_0,v_*:[0,1] \to \C^2$, 
two functions $\vv_0,\vv _*:\R\times[0,1] \to \C^2$
and four functions  $v_0^1,v_0^2,v_*^1v_*^2:\R\times[0,1] \to \R^2$ by
\beq
\label{vnulldef}
v_0(x):=
\left[
  \begin{array}{c}
    iu_0(x)+a_0(x)u'_0(x)\\
    iu_0(x)-a_0(x)u'_0(x)
  \end{array}
\right], \;
\vv_0(t,x):=e^{it}v_0(x),\; v_0^1:=\mbox{Re}\,\vv_0,\; v_0^2:=\mbox{Im}\,\vv_0
\ee
and
\beq
\label{v*def}
v_*(x):=
\left[
  \begin{array}{c}
    u_*(x)+iU_*(x)\\
     u_*(x)-iU_*(x)
  \end{array}
\right], \;
\vv_*(t,x)
:=e^{it}v_*(x),\; v_*^1
:=\mbox{Re}\,\vv_*,\; v_*^2:=\mbox{Im}\,\vv_*,
\ee
where
\beq
\label{Udef}
U_*(x):=\left(\frac{b_6^0(x)}{a_0(x)}-2a_0'(x)\right)u_*(x)-a_0(x)u'_*(x)
+\frac{1}{a_0(x)}\int_x^1\left(b_3^0(\xi)+b_4^0(\xi)e^{i\tau_0}\right)u_*(\xi)d\xi.
\ee
\begin{lemma}\label{ker}
  If the conditions of Theorem \ref{thm:hopf}  
  are fulfilled,
then $\ker {\cal L}=\mbox{\rm span}\{v^1_0,v^2_0\}$. 
 \end{lemma}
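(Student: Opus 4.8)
The statement identifies $\ker\mathcal{L}$ with the two-dimensional real span of $v_0^1,v_0^2$, where $\mathcal{L}=I-\CC-\DD(\J+\KK)$ is the linearization at $v=0$, $\om=1$, $\la=0$. My plan is to pass freely between the abstract integral equation and the first-order hyperbolic system via the identities \reff{Aid1}, \reff{Aid2} from the remark after Lemma \ref{lem:parint}, and to use the complexification $\vv=e^{it}v(x)$ to reduce the problem to the ODE eigenvalue problem \reff{evp}.

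First I would show the inclusion $\mathrm{span}\{v_0^1,v_0^2\}\subseteq\ker\mathcal{L}$. A function $v\in C_{2\pi}(\R\times[0,1];\R^2)$ lies in $\ker\mathcal{L}$ iff $v=\CC v+\DD(\J+\KK)v$, which by \reff{BBdef} and \reff{Bvdef} is exactly the statement that $v$ solves \reff{IE} with $\B$ replaced by its linearization $\partial_v\B(0,1,\tau_0,0)$; equivalently (since the relevant integral-equation/hyperbolic equivalence is purely formal once enough regularity is present), $v$ solves the linearization of \reff{FOS1} at $v=0$, i.e. $\A(1,0)v=(\J+\KK)v$ together with the boundary and periodicity conditions. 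So I would verify that the complex function $\vv_0(t,x)=e^{it}v_0(x)$ with $v_0$ as in \reff{vnulldef} solves this linear first-order system. This is a direct computation: with $v_0=(iu_0+a_0u_0',\,iu_0-a_0u_0')$ one recovers, via \reff{vdef} read backwards, that $\vv_0$ corresponds to the function $u(t,x)=e^{it}u_0(x)$, which solves the linearization of \reff{problem} at $u=0$, $\om=1$, $\la=0$ precisely because $\mu=i$, $u=u_0$ solves \reff{evp} with $\tau=\tau_0$. Taking real and imaginary parts gives $v_0^1,v_0^2\in\ker\mathcal{L}$, and these are $\R$-linearly independent because $u_0\not\equiv0$ forces $v_0\not\equiv0$, while $v_0$ and $\overline{v_0}$ are linearly independent over $\C$ (as $\mu=i\ne-i$).

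The substantive direction is $\ker\mathcal{L}\subseteq\mathrm{span}\{v_0^1,v_0^2\}$, and this is where the main work lies. Given $v\in\ker\mathcal{L}$, I would first upgrade its regularity: since $v=\CC v+\DD\J v+\DD\KK v$ and, by the identity \reff{Cident}-type manipulation together with Claims \ref{Jlemm}--\ref{DKClemm}, the operator $\DD\J+\DD\KK\DD+\DD\KK\CC$ smooths by one $t$-derivative, a bootstrap using $v=(I-\CC)^{-1}(\DD\J+\DD\KK)v$ and Remark \ref{c1} (which shows $(I-\CC)^{-1}$ preserves $C^1$) yields $\partial_t v$ continuous, hence $v\in C^1_{2\pi}$; then by Lemma \ref{lem:parint}(ii), $v$ solves the linearized \reff{FOS1}. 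Expanding $v$ in a Fourier series in $t$, $v(t,x)=\sum_{k\in\Z}e^{ikt}v_k(x)$ — here I must be slightly careful since the $C^1$ function need not have an absolutely convergent Fourier series pointwise, but each Fourier coefficient $v_k$ still solves the $k$-th linear ODE-with-boundary-conditions obtained by separating variables, which after undoing the transformation \reff{vdef} is exactly \reff{evp} with $\mu=ik$. By assumption $\bf(A2)$, this problem has only the trivial solution unless $k=\pm1$, so $v_k=0$ for $k\ne\pm1$; by $\bf(A1)$, $v_{\pm1}$ is a scalar multiple of $v_0$ resp. $\overline{v_0}$ (reality of $v$ forcing $v_{-1}=\overline{v_1}$), whence $v\in\mathrm{span}\{v_0^1,v_0^2\}$.

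The main obstacle I anticipate is making the Fourier-coefficient argument rigorous at the level of continuity rather than smoothness: one must justify that a merely $C^1$ (in fact, one only knows $v\in C^1$, not better a priori) solution of the linearized first-order system is genuinely the sum of its Fourier modes in a sense strong enough to conclude mode-by-mode that each $v_k$ solves \reff{evp}, and to conclude that $v$ vanishes if all modes do. The clean way around this is to avoid pointwise Fourier series entirely: test $v$ against $e^{-ikt}$ in $t$ (integrate $\frac{1}{2\pi}\int_0^{2\pi}v(t,x)e^{-ikt}\,dt$), note this commutes with the $x$-derivatives and with the spatially-local and integral operators in \reff{FOS1}/\reff{IE} because those operators act only in $x$ or by fixed shifts in $t$, and thereby obtain directly that $v_k(x)$ solves \reff{evp} with $\mu=ik$; then $\bf(A1)$--$\bf(A2)$ give $v_k=0$ for $|k|\ne1$ and pin down $v_{\pm1}$, and finally $v=\sum_k e^{ikt}v_k$ holds in, say, $L^2$ in $t$ for each $x$, which combined with continuity of $v$ forces $v(t,x)=e^{it}v_1(x)+e^{-it}\overline{v_1}(x)$ pointwise. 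Assembling these pieces completes the proof.
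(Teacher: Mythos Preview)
Your proposal is correct and follows essentially the same route as the paper: reduce the kernel equation to the linearized first-order system (equivalently, to \reff{linproblem}), take Fourier coefficients in $t$, and use assumptions $\bf(A1)$--$\bf(A2)$ to kill all modes with $|k|\ne 1$ and pin down the $k=\pm1$ modes as multiples of $v_0,\overline{v_0}$.

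The one place where you are actually more careful than the paper is the regularity step. The paper simply invokes Lemmas \ref{lem:FOS} and \ref{lem:parint} to pass between the integral equation $\LL v=0$ and the PDE \reff{linproblem}, without explicitly justifying why a kernel element $v\in C_{2\pi}$ is automatically $C^1$ (which is what Lemma \ref{lem:parint}(ii) requires). Your bootstrap via Claims \ref{Jlemm}--\ref{DKClemm} and Remark \ref{c1} is the right way to close this: writing $\EE=(I-\CC)^{-1}\DD(\J+\KK)$, one has $v=\EE v=\EE^2 v$, and expanding $\EE^2$ as in \reff{Epred} (without the $P$-terms) shows that every summand contains a factor $\DD\J$, $\DD\KK\DD$, or $\DD\KK\CC$, each of which maps $C_{2\pi}$ into $C^1_{2\pi}$; since $(I-\CC)^{-1}$ preserves $C^1$, this gives $v\in C^1_{2\pi}$. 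Your concern about Fourier convergence is also handled correctly---testing against $e^{-ikt}$ commutes with the operators in \reff{FOS1}, so each $v_k$ solves the mode-$k$ problem regardless of pointwise convergence, and $L^2$-in-$t$ reconstruction together with continuity recovers $v$ pointwise. The paper's proof is terser but relies on the same mechanism.
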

 \begin{proof}
 	Because $u_0$ is a solution to  \reff{evp} with $\tau=\tau_0$ and $\mu=i$,
   the complex-valued function
   $
   \uu_0(t,x):=e^{it}u_0(x)
   $
  is a solution to the linear homogeneous problem
\beq
\label{linproblem}
\left.
\begin{array}{l}
  \d_t^2u(t,x)- a_0(x)^2\d_x^2u(t,x)\\
  =b_3^0(x)u(t,x)+b_4^0(x)u(t-\tau_0,x)+b_5^0(x)\partial_tu(t,x)+b_6^0(x)\partial_xu(t,x),\\
u(0,t) = \d_xu(t,1)=0,\;u(t+2\pi,x)=u(t,x).
\end{array}
\right\}
\ee
On the other hand, if $u$ is a solution to \reff{linproblem}, then for all $k \in \Z$ the functions
  $$
  \tilde{u}_k(x):=\frac{1}{2\pi}\int_0^{2\pi}u(t,x)e^{-ikt}dt
  $$
  satisfy the ODE 
$\left(-k^2-b_3^0(x)-b_4^0(x)e^{ik\tau_0}-ikb_5^0(x)\right)\tilde{u}_k(x)=a_0(x)^2\tilde{u}_k''(x)+b_6^0(x)\tilde{u}_k'(x)$
with boundary conditions $\tilde{u}_k(0)=\tilde{u}_k'(1)=0$.
    Assumptions $\bf(A1)$ and  $\bf(A2)$ imply that
     $\tilde{u}_k=0$ for all $k \in \Z\setminus\{\pm 1\}$ and $\tilde{u}_1=c
    u_0$ for some constant $c$,
    i.e., $u\in\mbox{span}\{\uu_0,\overline{\uu_0}\}$. In other words,
    $\mbox{span}\{\uu_0,\overline{\uu_0}\}$ consists of
     all solutions $u:\R \times [0,1] \to \C$ to \reff{linproblem}.

    Now we apply Lemmas \ref{lem:FOS} and \ref{lem:parint} with $\om=1$, $\tau=\tau_0$, $\la=0$ and with $b(x,\la,u_3,u_4,u_5,u_6)$ replaced by
    $b_3^0(x)u_3+b_4^0(x)u_4+b_5^0(x)u_5+b_6^0(x)u_6$. We conclude that 
     $\mbox{span}\{\vv_0,\overline{\vv_0}\}$ consists
    of all solutions $v:\R \times [0,1] \to \C^2$ to
    the linear homogeneous equation
    $
    v=\CC v-\DD(\J+\KK)v,
    $
    where $\vv_0$ is defined by \reff{vnulldef}.
    As $v_0^1=\mbox{Re}\vv_0$ and  $v_0^2=\mbox{Im}\vv_0$,  the proof is 
    complete.
  \end{proof}

  In what follows we denote by ``$\cdot$'' the Hermitian scalar product in $\C^2$, i.e.
  $v\cdot w:=v_1\overline{w_1}+v_2\overline{w_2}$ for  $v,w \in \C^2$.
  Further, for continuous functions $v,w:[0,2\pi]\times[0,1]\to\C^2$ we write
$$
\langle v,w \rangle:=\frac{1}{2\pi}\int_0^{2\pi}\int_0^1v(t,x)\cdot w(t,x)dxdt
=\frac{1}{2\pi}\int_0^{2\pi}\int_0^1(v_1(t,x)\overline{w_1(t,x)}+v_2(t,x)\overline{w_2(t,x)})dxdt.
$$
Moreover, we will work with the operator
$\A \in {\cal L}\left(C^1_{2\pi}(\R \times [0,1];\R^2);C_{2\pi}(\R \times [0,1];\R^2)\right)$, the components of which are defined by
$$
\left.
\begin{array}{rcl}
  \displaystyle[A_1v](t,x)&:=&\d_tv_1(t,x)-a_0(x)\d_xv_1(t,x)-b^0_1(x)v_1(t,x),\\
  \displaystyle[\A_2v](t,x)&:=&\d_tv_2(t,x)+a_0(x)\d_xv_2(t,x))-b^0_2(x)v_2(t,x)
\end{array}
\right\}
\; b_j^0(x):=b_j(x,0), j=1,2,
$$
i.e. $\A=\A(1,0)$ (cf. \reff{AAdef}),
and its formal adjoint one $\A^* \in {\cal L}\left(C^1_{2\pi}(\R \times [0,1];\R^2);C_{2\pi}(\R \times [0,1];\R^2)\right)$, which is defined by
\begin{eqnarray*}
  [\A_1^*v](t,x)&:=&-\d_tv_1(t,x)+\d_x(a_0(x)v_1(t,x))-b^0_1(x)v_1(t,x),\\
   \displaystyle[\A^*_2v](t,x)&:=&-\d_tv_2(t,x)-\d_x(a_0(x)v_2(t,x))-b^0_2(x)v_2(t,x).
\end{eqnarray*}
It is easy to verify that $\langle \A v,w\rangle=\langle v,\A^*w\rangle$ for all $v,w \in C^1_{2\pi}(\R \times [0,1];\R^2)$ which
satisfy the boundary conditions in \reff{FOS}.

\begin{lemma}\label{kerim}
	 If the conditions of Theorem \ref{thm:hopf}  are  fulfilled,
  then
  $$
  \mbox{\rm im}\,{\cal L}=\{f \in C_{2\pi}(\R \times [0,1];\R^2): \; \langle f,\A^*v_*^1\rangle=\langle f,\A^*v_*^2\rangle=0\}.
  $$
 \end{lemma}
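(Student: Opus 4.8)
The plan is to characterize $\operatorname{im}{\cal L}$ as the annihilator of $\ker{\cal L}^*$ for an appropriate adjoint, and then identify that kernel with $\operatorname{span}\{v_*^1,v_*^2\}$ after applying ${\cal L}^*$-type manipulations — but since ${\cal L}=I-\CC-\DD(\J+\KK)$ is a Fredholm operator of index zero (Lemma~\ref{Fredia}), we know $\operatorname{im}{\cal L}$ is closed and has codimension equal to $\dim\ker{\cal L}=2$ (by Lemma~\ref{ker}). So it suffices to exhibit two linearly independent continuous linear functionals vanishing on $\operatorname{im}{\cal L}$, namely $f\mapsto\langle f,\A^*v_*^1\rangle$ and $f\mapsto\langle f,\A^*v_*^2\rangle$, and check they are linearly independent; then a dimension count forces equality.

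First I would translate the partial-integral formulation back to the differential one. Recall from \reff{Aid1}–\reff{Aid2} that $\DD(\om,\la)$ is essentially a right inverse of $\A(\om,\la)$ modulo $\CC$. Specializing to $\om=1,\la=0$: if $v=\CC v+\DD(\J+\KK)v+\DD f_0$ for some $f_0\in C_{2\pi}$, then applying $\A=\A(1,0)$ and using \reff{Aid1} gives $\A v=(\J+\KK)v+f_0$ — i.e., ${\cal L}v=\DD g$ corresponds to $(\A-\J-\KK)v=g$ together with the boundary and periodicity conditions of \reff{FOS}. More precisely, I would show that ${\cal L}v=f$ has a solution $v\in C_{2\pi}(\R\times[0,1];\R^2)$ if and only if $f=\DD g$ for some $g$ (which, given the structure of $\DD$, is a regularity/range condition one must handle carefully), and then that $g$ lies in the range of the differential operator $v\mapsto\A v-(\J+\KK)v$ restricted to functions satisfying the \reff{FOS} boundary conditions. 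This is where the mismatch between the $C_{2\pi}$ setting and the $C^1$ setting must be navigated: $\DD f$ need not be $C^1$ in general, so the clean identity $\DD\A v=v-\CC v$ only applies on $C^1_{2\pi}$ with the right boundary conditions. The honest route is: take $f\in\operatorname{im}{\cal L}$, pick $v$ with ${\cal L}v=f$, and compute $\langle f,\A^*v_*^j\rangle=\langle {\cal L}v,\A^*v_*^j\rangle$ directly, showing it vanishes by the defining properties of $v_*$; conversely use Fredholm index zero plus the independence of the two functionals.

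The core computation is: for $v$ with ${\cal L}v=f$, rewrite $f=v-\CC v-\DD(\J+\KK)v$ and apply $\A$ (after establishing enough regularity — which I'd get from the $C^1$-regularization remarks, Claims~\ref{Jlemm}–\ref{DKClemm} and Remark~\ref{c1}, showing $\CC v+\DD(\J+\KK)v$ picks up a derivative, hence so does $f$ forced into $\operatorname{im}{\cal L}$, or rather one mollifies). One gets $\A v-(\J+\KK)v = $ (something expressible via $f$), and then $\langle \A v-(\J+\KK)v,\,v_*^j\rangle = \langle v,\A^*v_*^j\rangle-\langle(\J+\KK)v,v_*^j\rangle$. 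The functions $v_*^1,v_*^2$ (built from $u_*$ via \reff{v*def}, \reff{Udef}) are precisely engineered so that $\A^*v_*^j-(\J+\KK)^*v_*^j=0$ in the interior and the boundary terms in the integration by parts vanish — this is the first-order-system incarnation of the fact that $u_*$ solves the adjoint eigenvalue problem \reff{ad}. So $\langle f,\A^*v_*^j\rangle$ collapses to $\langle v,(\A^*-(\J+\KK)^*)v_*^j\rangle=0$. Running this backwards, together with the known facts $\dim\ker{\cal L}=2$ and $\operatorname{ind}{\cal L}=0$, gives $\operatorname{codim}\operatorname{im}{\cal L}=2$, and since $\{\A^*v_*^1,\A^*v_*^2\}$ span a 2-dimensional space of functionals vanishing on $\operatorname{im}{\cal L}$ (their independence following from $u_*\ne 0$ and the explicit formula for $U_*$), the claimed equality holds.

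The main obstacle I anticipate is the regularity bookkeeping in passing between the integral operator ${\cal L}$ on $C_{2\pi}$ and the differential operator $\A$ on $C^1_{2\pi}$: one must either verify that every $f\in\operatorname{im}{\cal L}$ automatically enjoys enough smoothness to justify the integration by parts against $v_*^j$ (plausible via the smoothing Claims~\ref{Jlemm}, \ref{DKDlemm}, \ref{DKClemm} plus the identity $f=v-\CC v-\DD(\J+\KK)v$, since $v=\CC v+\DD(\ldots)$ then $f$ inherits the regularity of $\DD(\ldots)$, hmm — actually $f$ is given arbitrary, so the correct statement is that $v$ itself, being $\CC v+\DD(\J+\KK)v+f$, is as smooth as $f$ plus the $C^1$-gain, which does not immediately help), or, more robustly, approximate: establish the orthogonality relations $\langle\DD g,\A^*v_*^j\rangle$ formulas for smooth $g$ and pass to the limit using continuity of the functionals and density. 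A secondary point is confirming the boundary conditions satisfied by $v_*^j$ match those required for $\langle\A v,w\rangle=\langle v,\A^*w\rangle$ (the \reff{FOS} boundary conditions) — this must be read off from the second-order adjoint boundary conditions in \reff{ad} via the transformation \reff{vdef}, and is exactly why $U_*$ is defined with that particular integral term. Once these two technical matters are settled, the rest is the dimension count, which is immediate from Lemmas~\ref{Fredia} and \ref{ker}.
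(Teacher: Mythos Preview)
Your approach is essentially the paper's: use Lemmas~\ref{Fredia} and~\ref{ker} to get $\operatorname{codim}\operatorname{im}{\cal L}=2$, show $\operatorname{im}{\cal L}$ lies in the annihilator of $\A^*v_*^1,\A^*v_*^2$ by verifying $(\A^*-\J^*-\KK^*)\vv_*=0$ (the first-order adjoint of \reff{ad}), handle the regularity mismatch by approximating $v\in C_{2\pi}$ by $C^1_{2\pi}$ functions, and check the boundary conditions for $v_*^j$ via \reff{ad}. The one place you are too quick is the linear independence of $\A^*v_*^1$ and $\A^*v_*^2$: the paper does \emph{not} deduce this from $u_*\ne 0$ alone, but rather constructs explicit test functions $w^1,w^2$ (see \reff{w0def}--\reff{wdef}) and computes $\langle w^j,\A^*v_*^k\rangle=\delta^{jk}$ using the transversality assumption $\sigma\ne 0$ from \textbf{(A3)} --- this simultaneously proves independence and furnishes the projection $P$ needed later.
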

\begin{proof}
  It follows from Lemmas \ref{Fredia} and \ref{ker} that  $\mbox{\rm im}\,{\cal L}$ is a closed subspace  of 
codimension two in  $C_{2\pi}(\R\times[0,1];\R^2)$. Hence,  it suffices to show that
  \beq
  \label{subset}
  \mbox{\rm im}\,{\cal L}\subseteq\{f \in C_{2\pi}(\R \times [0,1];\R^2): \; \langle f,\A^*v_*^1\rangle=\langle f,\A^*v_*^2\rangle=0\}
  \ee
  and that
   \beq
   \label{codim}
   \mbox{$\A^*v_*^1$ and $\A^*v_*^2$ are linearly independent.}
   \ee

   To prove \reff{subset}, fix an arbitrary $v \in C_{2\pi}(\R\times[0,1];\R^2)$. There exists
   a sequence $w^1,w^2,\ldots \in C^1_{2\pi}(\R\times[0,1];\R^2)$ such that $\|v-w^k\|_\infty \to 0$ as $k\to \infty$.
   Moreover, the functions $(\CC+\DD(\J+\KK))w^k$ satisfy the boundary conditions in \reff{FOS}. Also  the function $v_*^1$  satisfies the boundary 
conditions in \reff{FOS}. The last fact follows from the equalities
   $[(u_* +iU_*)+(u_* -iU_*)]_{x=0}=2u_*(0)=0$ and
   \beq
   \label{adbc}
   [(u_* +iU_*)-(u_* -iU_*)]_{x=1}=2i\left[\left(\frac{b_6^0}{a_0}-2a_0'\right)u_*-a_0u_*'\right]_{x=1}=0
   \ee
   because the eigenfunction $u_*$  satisfies
    the boundary conditions  in \reff{ad}. 
   Therefore, by  \reff{Aid1},
   $$
   \langle(\CC+\DD(\J+\KK))w^k,\A^*v_*^1\rangle= \langle\A(\CC+\DD(\J+\KK))w^k,v_*^1\rangle=
   \langle(\J+\KK)w^k,v_*^1\rangle= \langle w^k,(\J^*+\KK^*)v_*^1\rangle,
   $$
   where the operators
    $\J^*,\KK^*\in {\cal L}(C_{2\pi}(\R\times[0,1];\R^2))$
   are the formal adjoint operators to  $\J$ and $\KK$.
   Due to  \reff{J12def} and \reff{K12def}, they are given by the formulas
   $$
   [\J^*_1w](t,x)= - [\J^*_2w](t,x)=\frac{1}{2a_0(x)}\int_x^1(b_3^0(\xi)(w_1(t,\xi)+w_2(t,\xi))+b_4^0(\xi)(w_1(t+\tau_0,\xi)+w_2(t+\tau_0,\xi)))d\xi
   $$
   and
   $$
   \KK^*_1w=b_1^0w_2,\;  \KK^*_2w=b_2^0w_1,
   $$
   respectively. It follows that
   \begin{eqnarray*}
     &&\langle {\cal L}v,\A^*v_*^1\rangle=\langle (I-\CC-\DD(\J+\KK))v,\A^* v_*^1\rangle
     =\langle v,\A^* v_*^1\rangle-\lim_{k \to \infty}\langle (\CC+\DD(\J+\KK))w^k,\A^* v_*^1\rangle\\
     &&=\langle v,\A^* v_*^1\rangle-\lim_{k \to \infty}\langle ((\J+\KK)w^k,v_*^1\rangle=\langle v,(\A^*-\J^*-\KK^*)v_*^1\rangle.
   \end{eqnarray*}
   Similarly,  $\langle {\cal L}v,\A^*v_*^2\rangle=\langle v,(\A^*-\J^*-\KK^*)v_*^2\rangle$.
   Hence, in order to prove \reff{subset} it suffices to show that
   \beq
   \label{veq} 
   (\A^*-\J^*-\KK^*)\vv_*=0. 
   \ee

   Taking into account  the definitions of the operators $\A^*$, $\J^*$ and $\KK^*$ and of the function $v_*$ (cf. \reff{v*def}),
   it is easy to see that \reff{veq} is satisfied if and only if, for any $x \in [0,1]$,
      \begin{eqnarray*}
       &&\left[-iv_{*1}+(a_0v_{*1})'-b_1^0(v_{*1}+v_{*2})\right](x)=\frac{1}{2a_0(x)}\int_x^1(b_3^0(\xi)+b_4^0(\xi)
         e^{i\tau_0})(v_{*1}(\xi)+v_{*2}(\xi))d\xi,\\
       &&\left[-iv_{*2}-(a_0v_{*2})'-b_2^0(v_{*1}+v_{*2})\right](x)=-\frac{1}{2a_0(x)}\int_x^1(b_3^0(\xi)+b_4^0(\xi)
         e^{i\tau_0})(v_{*1}(\xi)+v_{*2}(\xi))d\xi,
   \end{eqnarray*}
   where $v_{*1}=u_*+iU_*$ and  $v_{*2}=u_*-iU_*$ are the components of the vector function $v_*$.
   Considering the sum and the difference of these two 
  equations 
   and taking into account that $v_{*1}+v_{*2}=2u_*$ and  $v_{*1}-v_{*2}=2iU_*$,
   we get
   \begin{eqnarray}
     \label{s1}
     &&-iu_{*}(x)+i(a_0U_{*})'(x)-(b_1^0(x)+b_2^0(x))u_*(x)=0,\\
     \label{s2}
      &&U_*(x)+(a_0u_{*})'(x)-(b_1^0(x)-b_2^0(x))u_*(x)=\frac{1}{a_0(x)}\int_x^1(b_3^0(\xi)+b_4^0(\xi)e^{i\tau_0})u_*(\xi)d\xi.
   \end{eqnarray}

Thus, \reff{veq} is equivalent  to \reff{s1}--\reff{s2}.
   In order to show  \reff{s1},  we use the equality $b_1^0+b_2^0=b_5^0$ (cf. \reff{b1def}) and note  that  \reff{s1} is equivalent to
   \beq
   \label{s3}
   (a_0U_*)'=(1-ib_5^0)u_*.
   \ee
   On the other side,  \reff{Udef} yields
\beq
   \label{s4}
   (a_0U_*)'=(b_6^0u_*)'-(a_0^2u_*)''-(b_3^0+b_4^0e^{i\tau_0})u_*.
   \ee
   Inserting \reff{s4} into \reff{s3}, we conclude that \reff{s1} is true if
   $u_*$ solves the ordinary differential equation in  \reff{ad}, i.e. \reff{s1} is satisfied.

Equation \reff{s2} is satisfied by the definition \reff{Udef} of the function  $U_*$ and  the equality
   $b_1^0-b_2^0=-a_0'+b_6^0/a_0$ (cf. \reff{b1def}). The proof of
   \reff{veq} and, hence, of
    \reff{subset}
   is therefore complete.

   It remains to prove \reff{codim}. To this end, we introduce functions $w_0: [0,1] \to \C^2$,
   $\ww_0: \R \times[0,1] \to \C^2$ and $w^1,w^2 \in C^1_{2\pi}(\R \times [0,1];\R^2)$ by
   \beq
   \label{w0def}
w_0:=\left[
     \begin{array}{c}
       (i+\tau_0b_4^0e^{-i\tau_0})u_0+a_0u_0'\\
        (i+\tau_0b_4^0e^{-i\tau_0})u_0-a_0u_0'
     \end{array}
   \right],
   \quad \ww_0(t,x):=e^{it}w_0(x)
   \ee
   and
   \beq
   \label{wdef}
   (I-\CC)w^1=\DD\,\mbox{Re}\,\ww_0,\quad  (I-\CC)w^2=-\DD\,\mbox{Im}\,\ww_0.
   \ee
   Note that the equations \reff{wdef} define the functions $w^1,w^2 \in C^1_{2\pi}(\R \times [0,1];\R^2)$ uniquely,
   as follows from Claim \ref{C1} in Section 
   \ref{Fredholmness of the linearization}
   (see also Remark \ref{c1}).
  Combining  \reff{Aid1} with \reff{wdef}, we obtain 
 $$
 \A w^1=\mbox{Re}\,\ww_0,\; \A w^2=-\mbox{Im}\,\ww_0.
$$
Therefore,
   \begin{eqnarray}
     \label{int1}
     &&\langle w^1,\A^*\vv_* \rangle= \langle \A w^1,\vv_*^1 \rangle= \langle \mbox{Re}\,\ww_0,\vv_*\rangle\nonumber\\
     &&=\frac{1}{4\pi}\int_0^{2\pi}\int_0^1\left(e^{it}w_0(x)+e^{-it}\overline{w_0(x)}\right)\cdot
        e^{it}v_*(x)\,dxdt=\frac{1}{2}\int_0^1w_0(x)\cdot 
        v_*(x)\,dx.
   \end{eqnarray}
   By 
\reff{v*def} and 
\reff{w0def}, the right hand side of \reff{int1} is equal to
    \begin{eqnarray*}
     &&\frac{1}{2}\int_0^1\left(((i+\tau_0b_4^0e^{-i\tau_0})u_0+a_0u_0')\cdot(\overline{u_*}-i\overline{U_*})+
        ((i+\tau_0b_4^0e^{-i\tau_0})u_0-a_0u_0')\cdot(\overline{u_*}+i\overline{U_*})\right)dx\\
     &&=\int_0^1((i+\tau_0b_4^0e^{-i\tau_0})u_0\overline{u_*}-ia_0u_0'\overline{U_*})dx
        =\int_0^1((i+\tau_0b_4^0e^{-i\tau_0})u_0\overline{u_*}+iu_0(a_0\overline{U_*})'\,dx.
    \end{eqnarray*}
Finally, we use  \reff{Udef} and the definition of $\sigma$ in {\bf(A3)} to get
    $$
      \langle w^1,\A^*v_*\rangle
      =
      \int_0^1(2i+\tau_0b_4^0e^{-i\tau_0}-b_5^0)u_0\overline{u_*}\,dx=\sigma.
      $$
Similarly,
$$
     \langle w^2,\A^*\vv_* \rangle= -\langle \mbox{Im}\,\ww_0,\vv_*\rangle
     =-\frac{1}{4\pi i}\int_0^{2\pi}\int_0^1\left(e^{it}w_0-e^{-it}\overline{w_0}\right)\cdot
     e^{it}{v_*}\,dxdt=-\frac{1}{2i}\int_0^1w_0\cdot{v_*}\,dx=i\sigma.
     $$
      Now, we normalize  the eigenfunctions $u_0$ and $u_*$ so that
      \beq
   \label{transvid}
   \sigma=\int_0^1\left(2i-b^0_5(x)+\tau_0e^{-i\tau_0}b^0_4(x)\right)u_0(x)\overline{u_*(x)}dx=1.
   \ee
It follows that
\beq
       \label{delta}
       \langle w^j,\A^*v_*^k \rangle=\delta^{jk},
       \ee
       which yields \reff{codim}, as desired.
   \end{proof}

\subsection{Splitting of equation \reff{abstract}}
\label{splitting}
Given $\vp \in \R$, we introduce a time shift operator $S_\vp \in \LL(C_{2\pi}(\R\times[0,1];\R^2))$ by
\beq
\label{S}
[S_\vp v](t,x):=v(t+\vp,x).
\ee
It is easy to verify that 
\begin{eqnarray}
  \label{inv1}
  &&S_\vp \A(\om,\la)=\A(\om,\la)S_\vp,\;
  S_\vp \CC(\om,\la)=\CC(\om,\la)S_\vp,\;
     S_\vp \DD(\om,\la)=\DD(\om,\la)S_\vp
  \end{eqnarray}   
  and
 \begin{eqnarray}    
  \label{inv2}
  &&S_\vp \B(v,\om,\tau,\la)=\B(S_\vp v,\om,\tau,\la) 
\end{eqnarray}
for all $\vp, \om, \tau, \la \in \R$ and $v \in C_{2\pi}(\R\times[0,1];\R^2))$.
It follows that $S_\vp \LL=\LL S_\vp$, in particular,
\beq
\label{invspace}
S_\vp \ker \LL=\ker \LL, \; S_\vp \mbox{im}\,\LL=\mbox{im}\, \LL.
\ee
Since $\ker \LL$ is finite dimensional, there exists a topological complement $\W$ (i.e., a closed subspace which is transversal) to
$\ker \LL$ in $ C_{2\pi}(\R\times[0,1];\R^2))$. Since the map $\vp \in \R \mapsto S_\vp \in \LL(C_{2\pi}(\R\times[0,1];\R^2))$
is strongly continuous, $\W$ 
can be chosen to be invariant with respect to  $S_\vp$, i.e.,
\beq
\label{split}
C_{2\pi}(\R\times[0,1];\R^2)=\ker \LL \oplus \W \ \mbox{ and }  \  S_\vp\W=\W
\ee
(cf. \cite[Theorem 2]{Dancer}).
Further, let us introduce a projection operator $P \in  \LL(C_{2\pi}(\R\times[0,1];\R^2))$ by
\beq
\label{Pdef}
Pv:=\langle v,\A^*v_*^1\rangle w^1+\langle v,\A^*v_*^2\rangle w^2,
\ee
where the functions $v_*^j$ and $w^k$ are given by \reff{vnulldef} and \reff{wdef}.
The projection property $P^2=P$ follows from \reff{delta}.
Moreover, Lemma \ref{kerim} implies that
\beq
\label{kerP}
\ker P= \mbox{im}\,\LL.
\ee
Furthermore, from \reff{w0def} it follows that $[S_\vp \ww_0](t,x)=\ww_0(t+\vp,x)=e^{i(t+\vp)}\ww_0(x)=e^{i\vp}\ww_0(x)$
and, hence,
$$
S_\vp \mbox{Re}\,\ww_0=\cos \vp\, \mbox{Re}\,\ww_0-\sin \vp \,\mbox{Im}\,\ww_0\  \mbox{ and } \ 
S_\vp \mbox{Im}\,\ww_0=\cos \vp \,\mbox{Im}\,\ww_0+\sin \vp \,\mbox{Re}\,\ww_0.
$$
Similarly one shows that 
$
S_\vp v_*^1=\cos \vp \,v_*^1-\sin \vp \,v_*^2$ and $
S_\vp v_*^2=\cos \vp \,v_*^2+\sin \vp \,v_*^1.
$
On the account of \reff{inv1},
 for every $v \in C_{2\pi}(\R\times[0,1];\R^2)$ we obtain
\begin{eqnarray}
  \label{Pinv}
  &&PS_\vp v=\langle S_\vp v,\A^*v_*^1\rangle w^1+\langle S_\vp v,\A^*v_*^2\rangle w^2=
  \langle v,\A^*S_{-\vp}v_*^1\rangle w^1+\langle v,\A^*S_{-\vp}v_*^2\rangle w^2\nonumber \\
  &&=\left(\cos\vp\langle v,\A^*v_*^1\rangle+\sin\vp\langle v,\A^*v_*^2\rangle\right)w^1
     +\left(-\sin\vp\langle v,\A^*v_*^1\rangle+\cos\vp\langle v,\A^*v_*^2\rangle\right)w^2\nonumber\\
  &&=\langle v,\A^*v_*^1\rangle\left(\cos \vp\, w^1-\sin \vp \,w^2\right)+
     \langle v,\A^*v_*^2\rangle\left(\sin \vp\, w^1+\cos \vp \,w^2\right)\nonumber\\
  &&=\langle v,\A^*v_*^1\rangle S_\vp w^1+\langle v,\A^*v_*^2\rangle S_\vp w^2= S_\vp Pv.
     \end{eqnarray}
Finally, we use the ansatz (cf. \reff{split})
\beq
\label{ansatz}
v=u+w,\quad u \in \ker \LL,\; w \in \W
\ee
    and rewrite equation \reff{abstract} as a system of two equations, namely
     \begin{eqnarray}
       \label{in}
       P\left((I-\CC(\om,\la))(u+w)-\DD(\om,\la)\B(u+w,\om,\tau,\la)\right)=0,\\
       \label{ex}
        (I-P)\left((I-\CC(\om,\la))(u+w)-\DD(\om,\la)\B(u+w,\om,\tau,\la)\right)=0.
     \end{eqnarray}

     \section{The external Lyapunov-Schmidt equation}
     \label{sec:external}
In this section we solve
the so-called external Lyapunov-Schmidt equation \reff{ex} with respect to $w \approx 0$ for $u\approx 0$,
$\om\approx 1$, $\tau\approx \tau_0$ and $\la \approx 0$.
More exactly, in Subsection \ref{appendix}  we present a generalized implicit function theorem,
    which will be used in Subsection~\ref{Solution of the external Lyapunov-Schmidt equation}   to solve equation  \reff{ex}.

   \subsection{A generalized implicit function theorem}\label{appendix}
    \renewcommand{\theequation}{{\thesection}.\arabic{equation}}
  \setcounter{equation}{0}
  In this subsection we present the generalized implicit function theorem,
    which 
  is a particular case of \cite[Theorem 2.2]{KR4}.
  It concerns abstract parameter-dependent equations of the type
  \beq
  \label{abst}
  F(w,p)=0.
  \ee
  Here $F$ is a map from
   ${\cal W}_0 \times {\cal P}$ to $\widetilde{\cal W}_0$, ${\cal W}_0$ and  ${\widetilde{\cal W}}_0$ are Banach spaces with norms $\|\cdot\|_0$
  and $|\cdot|_0$, respectively, and
  ${\cal P}$ is a finite dimensional  normed vector space with norm $\|\cdot\|$. Moreover, it is supposed that
  \beq
  \label{null}
  F(0,0)=0.
  \ee
  We are going to
  state conditions on $F$ such 
  that, similarly to the classical implicit function theorem,
  for all $p \approx 0$ there exists exactly one solution $w \approx 0$ to \reff{abst}
  and that the data-to-solution map $p \mapsto w$ is smooth.
  Similarly to the classical implicit function theorem, we suppose that
  \beq
  \label{Fsmooth}
  F(\cdot,p) \in C^\infty({\cal W}_0;{\widetilde{\cal W}}_0) \mbox{ for all } p \in {\cal P}.
  \ee
  However, unlike  to the classical case, we do not suppose that  $F(w,\cdot)$ is smooth for all $w \in {\cal W}_0$.
  In our applications the map $(w,p) \mapsto\d_wF(w,p)$ is not even  continuous with respect to the uniform operator norm
  in ${\cal L}({\cal W}_0;{\widetilde{\cal W}}_0)$, in general.
  Hence, the difference of  Theorem \ref{thm:IFT} below
  in comparison with the classical  implicit function theorem is not a degeneracy of the partial derivatives  $\d_wF(w,p)$
  (like in  implicit function theorems of Nash-Moser type), but a  degeneracy of the partial derivatives  $\d_pF(w,p)$
  (which do not exist for all $w \in {\cal W}_0$). 
  
  Thus, we consider parameter depending equations, 
  which do not depend smoothly on the parameter, but with solutions which do  depend smoothly on the parameter.
  For that, of course, some additional structure is needed, which will be described now.
  
  Let $\vp \in \R \mapsto S(\vp) \in \LL({\cal W}_0)$,
  $\vp \in \R \mapsto \widetilde{S}(\vp) \in \LL(\widetilde{{\cal W}}_0)$,
  and  $\vp \in \R \mapsto T(\vp) \in \LL({\cal P})$
  be strongly continuous groups of linear bounded operators on ${\cal W}_0$, $\widetilde{{\cal W}}_0$ and  ${\cal P}$, respectively.
  We suppose that
  \beq
  \label{symm}
  \widetilde{S}(\vp)F(w,p)=F(S(\vp)w,T(\vp)p) \mbox{ for all } \vp \in \R, w \in {\cal W}_0 \mbox{ and }  p \in {\cal P}.
  \ee
  Furthermore, let $A:D(A) \subseteq  {\cal W}_0 \to  {\cal W}_0$ be the infinitesimal generator of the $C_0$-group $S(\vp)$.
  For $l\in \N$, let
  $$
  {\cal W}_l:=D(A^l)=\{w \in  {\cal W}_0: S(\cdot)w \in C^l(\R; {\cal W_0})\}
  $$
  denote the domain of definition of the $l$-th power of $A$. Since $A$ is closed, ${\cal W}_l$ is a Banach space with the norm
  $$
  \|w\|_l:=\sum_{k=0}^l\|A^kw\|_0.
  $$
  We suppose that for all $k,l\in \N$
  \beq
  \label{infsmooth}
  \d^k_w\F(w,\cdot)(w_1,\ldots,w_k) \in C^l({\cal P};{\widetilde{\cal W}}_0) \mbox{ for all } w,w_1,\ldots,w_k \in {\cal W}_l
  \ee 
  and, for all $w,w_1,\ldots,w_k \in  {\cal W}_l$ and $p,p_1,\ldots,p_l \in  {\cal P}$ with $\|w\|_l+\|p\| \le 1$,
  \beq
  \label{esti}
  |\d^l_p\d^k_w\F(w,p)(w_1,\ldots,w_k,p_1,\ldots,p_l)|_0 \le c_{kl}\|w_1\|_l\ldots\|w_k\|_l\,\|p_1\|\ldots\|p_l\|,
  \ee
  where the constants $c_{kl}$ do not depend on  $w,w_1,\ldots,w_k,p,p_1,\ldots,p_l$.
  \begin{thm}\cite[Theorem 2.2]{KR4}
  	\label{thm:IFT}
  	Suppose that the conditions \reff{null}--\reff{esti} are fulfilled. Furthermore, assume that there exist $\eps_0 >0$ and $c>0$ such that 
  	for all  $p \in {\cal P}$ with $\|p\| \le \eps_0$
  	\beq
  	\label{FH}
  	\d_wF(0,p) \mbox{ is Fredholm of index zero from ${\cal W}_0$ into $\widetilde{\cal W}_0$}
  	\ee
  	and 
  	\beq
  	\label{coerz}
  	|\d_wF(0,p)w|_0 \ge c\|w\|_0 \mbox{ for all } w \in {\cal W}_0.
  	\ee
  	Then there exist $\eps \in (0,\eps_0]$ and $\delta>0$ such that for all  $p \in {\cal P}$ with $\|p\| \le \eps$ there is
  	a unique solution $w=\hat{w}(p)$ to \reff{abst} with $\|w\|_0 \le \delta$. Moreover, for all $k\in \N$
  	we have $\hat{w}(p) \in {\cal W}_k$, and the map $p \in  {\cal P}\mapsto \hat{w}(p) \in {\cal W}_k$ is $C^\infty$-smooth.
  \end{thm}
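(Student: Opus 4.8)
Theorem~\ref{thm:IFT} is stated as a particular case of \cite[Theorem 2.2]{KR4}, so one route is simply to verify that the (more general) hypotheses used there follow from \reff{null}--\reff{coerz}. A direct argument splits into two parts. First, for each \emph{fixed} small $p$ one solves \reff{abst} in ${\cal W}_0$ by a localised contraction argument using only the smoothness \reff{Fsmooth} of $F(\cdot,p)$, the uniform bound \reff{esti} (with $l=0$), and the invertibility of $\partial_wF(0,p)$ coming from \reff{FH}, \reff{coerz}. Second, one upgrades the resulting solution family to one that takes values in every ${\cal W}_k$ and depends $C^\infty$-smoothly on $p$, and here the equivariance \reff{symm} is decisive. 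The first part is routine (the classical implicit function theorem with parameter, kept uniform in $p$); the whole difficulty sits in the second part.

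For the first part, observe that by \reff{coerz} the operator $\partial_wF(0,p)$ is injective with closed range, and being Fredholm of index zero by \reff{FH} it is therefore an isomorphism from ${\cal W}_0$ onto $\widetilde{\cal W}_0$ with $\|\partial_wF(0,p)^{-1}\|\le 1/c$ for all $\|p\|\le\eps_0$. Rewrite \reff{abst} as the fixed point equation $w=\Phi_p(w):=w-\partial_wF(0,p)^{-1}F(w,p)$. Since $\partial_w\Phi_p(w)=\partial_wF(0,p)^{-1}\bigl(\partial_wF(0,p)-\partial_wF(w,p)\bigr)$ and \reff{esti} with $k=2$, $l=0$ bounds $\partial_w^2F$ on a fixed ball, $\Phi_p$ is a contraction with a fixed constant $\theta<1$ on a fixed small ball $\{\|w\|_0\le\delta\}\subset{\cal W}_0$. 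By \reff{null} and \reff{infsmooth} (with $k=0$, using $0\in{\cal W}_l$) the map $p\mapsto F(0,p)$ is smooth with $F(0,0)=0$, hence $|F(0,p)|_0\to0$ and $\|\Phi_p(0)\|_0\le c^{-1}|F(0,p)|_0\le(1-\theta)\delta$ once $\|p\|$ is small; so $\Phi_p$ maps the ball into itself. Banach's fixed point theorem yields a unique solution $w=\hat w(p)$ in that ball, with $\|\hat w(p)\|_0\le(1-\theta)^{-1}c^{-1}|F(0,p)|_0\to0$; the uniqueness statement ``$\|w\|_0\le\delta$'' is exactly the fixed-point uniqueness.

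It remains to show $\hat w(p)\in{\cal W}_k$ with bounds uniform in small $p$, and that $p\mapsto\hat w(p)\in{\cal W}_k$ is $C^\infty$. Applying $\widetilde S(\varphi)$ to $F(\hat w(p),p)=0$ and using \reff{symm} gives $F(S(\varphi)\hat w(p),T(\varphi)p)=0$, so by the local uniqueness just proved
$$
\hat w(T(\varphi)p)=S(\varphi)\hat w(p)\qquad\text{for all }\varphi\in\R\text{ and all small }p .
$$
Differentiating \reff{symm} in $w$ at $w=0$, $p=0$ and using $T(\varphi)0=0$ shows that $\partial_wF(0,0)$ intertwines the groups $S(\varphi)$ and $\widetilde S(\varphi)$; hence it maps $D(A^l)={\cal W}_l$ onto $D(\widetilde A^l)$ (with $\widetilde A$ the generator of $\widetilde S$) and restricts to an isomorphism on each level of the scale. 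One then bootstraps in $l$: the orbital identity, together with the inductively known regularity of $\hat w$ on ${\cal W}_{l-1}$ and the $C^\infty$-dependence of $\varphi\mapsto T(\varphi)p$, forces $\hat w(p)\in{\cal W}_l$; and \reff{infsmooth}--\reff{esti} say precisely that, restricted to arguments in ${\cal W}_l$, the nonlinear equation becomes $C^l$-smooth in $p$ (at the cost of one level of smoothness per $p$-derivative), so the classical implicit function theorem applies on the $l$-th level and improves the regularity of $p\mapsto\hat w(p)$ as $l$ grows. The single obstacle — and the reason the theorem is not a triviality — is exactly this loss of one derivative with each differentiation in $p$: a naive induction does not close, and it is only the group equivariance \reff{symm} (supplying the missing regularity along orbits, and, through the intertwining property of $\partial_wF(0,0)$, in all directions of $p$) that makes the bootstrap work. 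For the details of this last step we refer to \cite[Section 2]{KR4}.
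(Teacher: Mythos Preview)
The paper does not prove Theorem~\ref{thm:IFT}; it is quoted as a particular case of \cite[Theorem~2.2]{KR4} and invoked as a black box, with only the two remarks following it by way of commentary. Your proposal acknowledges this from the outset and, like the paper, ultimately defers to \cite{KR4} for the substantive step. In that sense there is nothing to compare: you and the paper agree that the proof lives in \cite{KR4}.

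That said, your sketch is a fair outline of how the argument in \cite{KR4} goes. The first part (uniform contraction in ${\cal W}_0$ using \reff{FH}--\reff{coerz} and the $l=0$ case of \reff{esti}) is correct and cleanly written. In the second part you correctly identify the orbital identity $\hat w(T(\varphi)p)=S(\varphi)\hat w(p)$ as the mechanism that supplies the extra $t$-regularity lost at each differentiation in $p$, and you are right that the intertwining $\widetilde S(\varphi)\partial_wF(0,0)=\partial_wF(0,0)S(\varphi)$ makes $\partial_wF(0,0)$ an isomorphism along the whole scale. One point is slightly glossed: the intertwining holds only at $p=0$, so for $p\neq 0$ the inverse $\partial_wF(0,p)^{-1}$ need not respect the scale directly; the actual bootstrap in \cite{KR4} works instead through the orbital identity (differentiating $\varphi\mapsto S(\varphi)\hat w(p)=\hat w(T(\varphi)p)$ and using that $\varphi\mapsto T(\varphi)$ is smooth since $\dim{\cal P}<\infty$, cf.\ Remark~\ref{hardIFT}). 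Your closing reference to \cite[Section~2]{KR4} covers this, so the proposal is acceptable as a proof sketch with citation.
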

  \begin{rem}
  	\label{hardIFT}
  	The maps $\vp \in \R \mapsto S(\vp) \in {\cal L}({\cal W}_0)$ and
  	$\vp \in \R \mapsto \widetilde{S}(\vp) \in {\cal L}(\widetilde{\cal W}_0)$
  	are not continuous, in general.
  	Nevertheless, since ${\cal P}$ is supposed to be finite dimensional, the map $\vp \in \R \mapsto T(\vp) \in {\cal L}({\cal P})$ is $C^\infty$-smooth.
  	This is essential in the proof of Theorem \ref{thm:IFT} in \cite{KR4}.
      \end{rem}
       \begin{rem}
         \label{Lanull}
         In Theorem \ref{thm:IFT} we do not suppose that $\d_wF(0,p)$ depends continuously on $p$ in the sense of the uniform operator norm
         in ${\cal L}({\cal W}_0;{\widetilde{\cal W}}_0)$. Hence, assumptions \reff{FH} and \reff{coerz} cannot be replaced by their versions with $p=0$, in general.
         \end{rem}

  \subsection{ Solution of the external Lyapunov-Schmidt equation}
  \label{external}
  \setcounter{claim}{0}
\label{Solution of the external Lyapunov-Schmidt equation}
In what follows, 
we  use the following notation (for $\eps>0$ and $k\in \N$):
\begin{eqnarray*}
  &&\UU_\eps:=\{u \in \ker \LL:\; \|u\|_\infty <\eps\},\quad \PP_\eps:=\{(\om,\tau,\la) \in \R^3:\; |\om-1|+|\tau-\tau_0|+|\la|<\eps\},\\
  &&C_{2\pi}:=C_{2\pi}(\R\times[0,1];\R^2),\quad C^k_{2\pi}:=C^k_{2\pi}(\R\times[0,1];\R^2).
\end{eqnarray*}

We are going to solve
the so-called external Lyapunov-Schmidt equation \reff{ex} with respect to $w \approx 0$ for $u\approx 0$,
$\om\approx 1$, $\tau\approx \tau_0$ and $\la \approx 0$.
\begin{lemma}
  \label{4.10}
   Let  the conditions of Theorem \ref{thm:hopf}  
  be fulfilled.
 Then there exist $\eps>0$ and $\delta>0$
  such that for all $u \in \UU_\eps$ and $(\om,\tau,\la)\in \PP_\eps$ there is a unique  solution $w=\hat{w}(u,\om,\tau,\la) \in \W$
  to \reff{ex} with $\|w\|_\infty <\delta$.
  Moreover, for all $k\in \N$ it holds $\hat{w}(u,\om,\tau,\la) \in C^k_{2\pi}$, and the map
  $(u,\om,\tau,\la) \in  \UU_\eps\times\PP_\eps\mapsto \hat{w}(u,\om,\tau,\la) \in C^k_{2\pi}$ is $C^\infty$-smooth.
\end{lemma}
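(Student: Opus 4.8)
The plan is to recast \reff{ex} as an abstract equation $F(w,p)=0$ of the type treated by Theorem \ref{thm:IFT} and to verify its hypotheses. Put $\W_0:=\W$ and $\widetilde{\W}_0:=\mbox{im}\,\LL=\ker P$ (cf.\ \reff{kerP}), let the parameter $p=(u,\om,\tau,\la)$ range over the finite-dimensional normed space $\PP:=\ker\LL\times\R^3$ with origin at $p_0=(0,1,\tau_0,0)$, and set
$$
F(w,p):=(I-P)\bigl((I-\CC(\om,\la))(u+w)-\DD(\om,\la)\B(u+w,\om,\tau,\la)\bigr)\in\widetilde{\W}_0 .
$$
As group actions take $S(\vp)=\widetilde{S}(\vp):=S_\vp$ (which leaves $\W$ and $\ker P$ invariant by \reff{split} and \reff{invspace}) and $T(\vp)(u,\om,\tau,\la):=(S_\vp u,\om,\tau,\la)$; the infinitesimal generator of $S_\vp$ is $\d_t$, so $\W_l$ consists of the $w\in\W$ that are $C^l$ in $t$. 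The ``soft'' hypotheses are immediate: \reff{null} holds because $\B(0,\om,\tau,\la)=0$ (since $b(x,\la,0,0,0,0)=0$), so $F(0,p)=0$ for every $p$ with $u=0$; \reff{Fsmooth} holds since $\B(\cdot,\om,\tau,\la)$ is $C^\infty$ and the other operators are bounded and linear in $w$; and the equivariance \reff{symm} follows from \reff{inv1}, \reff{inv2} and \reff{Pinv}.

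Next I would check the ``scale'' conditions \reff{infsmooth}--\reff{esti}, where the loss of derivatives enters. A partial derivative of $F$ in $\om$, $\tau$ or $\la$ hits either $\CC(\om,\la)$, $\DD(\om,\la)$, $\J(\om,\tau,\la)$, or the delayed argument inside $\B$, and --- by the chain rule through the shifted time-arguments $t\pm\om A(x,\xi,\la)$ and $t-\om\tau$ --- produces a factor $\d_t$ acting on the transformed function (all other occurrences of $\om,\tau,\la$, e.g.\ in $c_1,c_2$, in $a(\xi,\la)$, and in $b(x,\la,\cdots)$, are differentiated harmlessly, without a $t$-derivative). Thus $l$ derivatives in the $(\om,\tau,\la)$-directions cost $l$ time-derivatives, which get distributed over the directions $w_1,\dots,w_k$; taking these in $\W_l$ (and using $b\in C^\infty$ for the $w$-slot) gives \reff{infsmooth} and the bound \reff{esti}, of the form $\mbox{const}\cdot\|w_1\|_l\cdots\|w_k\|_l\,\|p_1\|\cdots\|p_l\|$. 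Derivatives in the $u$-direction cost nothing, being bounded and linear.

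It remains to verify \reff{FH} and \reff{coerz}, which by Remark \ref{Lanull} must hold uniformly for all $p$ near $p_0$. Write $\partial_v\B(u,\om,\tau,\la)=\J(\om,\tau,\la)+\KK(\la)+R(u,\om,\tau,\la)$, where $R$ is bounded linear with $R(0,\cdot)=0$ and $\|R(u,\cdot)\|=O(\|u\|)$ (cf.\ \reff{diffB}), so that $\d_wF(0,p)=(I-P)\bigl(I-\CC(\om,\la)-\DD(\om,\la)(\J+\KK+R)\bigr)\big|_\W$. By Lemma \ref{Fredia} the operator $I-\CC(\om,\la)-\DD(\om,\la)(\J(\om,\tau,\la)+\KK(\la))$ is Fredholm of index zero on $C_{2\pi}$ for all $\om\neq0$ and $|\la|$ small --- not merely perturbatively in $\om,\tau$ --- and composing with the inclusion $\W\hookrightarrow C_{2\pi}$ (index $-2$) and the projection $I-P:C_{2\pi}\to\ker P$ (index $+2$) keeps the index zero, while the norm-small summand $\DD(\om,\la)R(u,\cdot)$ preserves it too; this is \reff{FH}, and at $p_0$ one has $\d_wF(0,p_0)=\LL|_\W:\W\to\mbox{im}\,\LL$, an isomorphism. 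The coercivity \reff{coerz} is the main obstacle, precisely because $p\mapsto\d_wF(0,p)$ is not norm-continuous. Since $\d_wF(0,p)$ is index-zero Fredholm, \reff{coerz} amounts to a \emph{uniform} lower bound equivalent to uniform injectivity; I would prove it by contradiction, taking $p_n\to p_0$, $w_n\in\W$ with $\|w_n\|_\infty=1$ and $\d_wF(0,p_n)w_n\to0$, and showing $\{w_n\}$ is precompact in $C_{2\pi}$. Here the uniform estimates \reff{sup1}, \reff{sup2}--\reff{I-C-1} and \reff{Cdelta} from Lemma \ref{Fredia} and Remark \ref{c1} (as announced in Remark \ref{estis}) are used: $(I-\CC(\om,\la))^{-1}$ is uniformly bounded on $C_{2\pi}$ and on $C^1_{2\pi}$, the operator $\DD\J$ is uniformly bounded into $C^1_{2\pi}$, and the squares occurring in \reff{Cident} map bounded sets into fixed bounded subsets of $C^1_{2\pi}$; so the compact-up-to-square mechanism of Theorem \ref{Niko}, combined with the compact embedding $C^1_{2\pi}\hookrightarrow C_{2\pi}$ and Arzel\`a--Ascoli, yields a subsequence with $w_n\to w_*\in\W$, $\|w_*\|_\infty=1$. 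Since $\CC(\om_n,\la_n),\DD(\om_n,\la_n),\J_n,\KK_n$ converge strongly and $R_n\to0$ in norm, passing to the limit gives $(I-P)\LL w_*=0$, hence $\LL w_*\in\mbox{im}\,P\cap\ker P=\{0\}$, so $w_*\in\ker\LL\cap\W=\{0\}$ --- contradiction. The delicate point is passing to the limit through $(I-\CC(\om_n,\la_n))^{-1}$, carried out by observing that the relevant limit lies in the smooth two-dimensional space $\mbox{span}\{w^1,w^2\}$ (cf.\ \reff{wdef}), on which these inverses are uniformly $C^1_{2\pi}$-bounded.

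With \reff{null}--\reff{esti}, \reff{FH} and \reff{coerz} verified, Theorem \ref{thm:IFT} produces $\eps>0$ and $\delta>0$ and, for $(u,\om,\tau,\la)\in\UU_\eps\times\PP_\eps$, the unique solution $w=\hat{w}(u,\om,\tau,\la)\in\W$ with $\|w\|_\infty<\delta$, together with $\hat{w}(u,\om,\tau,\la)\in\W_k$ for every $k$ and $C^\infty$-dependence of $(u,\om,\tau,\la)\mapsto\hat{w}(u,\om,\tau,\la)\in\W_k$ --- that is, $C^k$-regularity and smooth dependence in the $t$-variable. To upgrade this to $C^k_{2\pi}$ (joint regularity in $t$ and $x$), note that \reff{ex} forces $(I-\CC(\om,\la))(u+\hat{w})-\DD(\om,\la)\B(u+\hat{w},\om,\tau,\la)\in\mbox{im}\,P=\mbox{span}\{w^1,w^2\}$, hence $u+\hat{w}=(I-\CC(\om,\la))^{-1}\bigl(\DD(\om,\la)\B(u+\hat{w},\om,\tau,\la)+\alpha_1 w^1+\alpha_2 w^2\bigr)$ for suitable constants $\alpha_1,\alpha_2$; since $\DD(\om,\la)$ converts $t$-regularity into joint regularity and $(I-\CC(\om,\la))^{-1}$ preserves it (Remark \ref{c1}), a bootstrap in $k$ gives $\hat{w}\in C^k_{2\pi}$ together with the claimed smoothness of $(u,\om,\tau,\la)\mapsto\hat{w}(u,\om,\tau,\la)\in C^k_{2\pi}$.
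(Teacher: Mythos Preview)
Your proposal is correct and follows essentially the same route as the paper: the same abstract setting for Theorem \ref{thm:IFT}, the same verification of \reff{symm}--\reff{esti} via the ``each $(\om,\tau,\la)$-derivative costs one $\partial_t$'' mechanism, the same contradiction/compactness argument for \reff{coerz}, and the same bootstrap from $\W_k$-regularity to $C^k_{2\pi}$-regularity. Two tactical differences are worth noting. First, in the limit step of the coercivity argument the paper does not use strong convergence of $\CC_n,\DD_n,\J_n,\KK_n$ directly; instead it tests against a fixed $h\in C_{2\pi}$ and moves the time-shift onto $h$, which is of course equivalent to your strong-convergence observation---your ``delicate point'' about $(I-\CC_n)^{-1}$ is not actually needed at this stage, since the limit is taken on $(I-P)(I-\CC_n-\DD_n)w_n$ itself, and $(I-\CC_n)^{-1}$ enters only earlier, in the $\EE_n^2$-compactness step, where the uniform bounds \reff{I-C-1} and \reff{Cdelta} suffice. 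Second, for the bootstrap the paper writes $\hat{w}=F(\hat{w},u,\om,\tau,\la)$ with $F=\CC w+\DD\B+\mbox{(finite-rank smooth)}$ and checks explicitly that $\partial_t$ and $\partial_x$ of $\CC w$ and $\DD\B$ land in $C^k_{2\pi}$ once $w\in\W^{k+1}\cap C^k_{2\pi}$; your $(I-\CC)^{-1}$ formulation is equivalent but your phrase ``$\DD$ converts $t$-regularity into joint regularity'' is slightly too strong---the $\partial_x$-derivative of $\DD\B$ produces a boundary term involving $\B$ pointwise, so one needs the induction hypothesis $\hat{w}\in C^k_{2\pi}$ (not only $\hat{w}\in\W^{k+1}$) to close the step. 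Finally, your choice $\widetilde{\W}_0=\ker P=\mbox{im}\,\LL$ is the correct codomain for $F$; the paper's \reff{setting} writes $\mbox{im}\,P$, which appears to be a slip.
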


We have that $w=0$ is a solution to \reff{ex} with $u=0$, $\om=1$, $\tau=\tau_0$ and $\la=0$. This suggests that Lemma \ref{4.10} 
can be obtained from an appropriate
implicit function theorem. Unfortunately,
the classical implicit function theorem does not work here, because
the left-hand side of \reff{ex} is differentiable with respect to $\om$, $\tau$ and $\la$ not for any $w\in C_{2\pi}$.
We will apply Theorem \ref{thm:IFT}.

 Let us verify the assumptions of  Theorem  \ref{thm:IFT} in the
 following setting:
\beq
\label{setting}
\left.
  \begin{array}{l}
    \W_0=\W,\;  \widetilde{\W}_0=\mbox{im}\,P,\; \PP=\ker \LL \times \R^3, \; p=(u,\om-1,\tau-\tau_0,\la),\\
    F(w,p)=(I-P)\left((I-\CC(\om,\la))(u+w)-\DD(\om,\la)\B(u+w,\om,\tau,\la)\right).
  \end{array}
  \right\}
  \ee
Note that  $\W_0$ and   $\widetilde{\W}_0$ are Banach spaces with the norm $\|\cdot\|_\infty$. Conditions    \reff{null}, \reff{Fsmooth} and 
\reff{FH} are fulfilled, the last one being true due to  Lemma \ref{Fredia}.

  It remains to verify  conditions \reff{symm}--\reff{esti} and \reff{coerz}.

  We begin with verifying \reff{symm}. 
We identify $S_\vp$ and  $\widetilde{S}_\vp$ with $S_\vp$ defined by \reff{S} 
  restricted to $\W_0$ and  $\widetilde{\W}_0$,
  respectively.  Let
  $$
  T_\vp(u,\om,\tau,\la):=(S_\vp u,\om,\tau,\la).
  $$
  It follows from \reff{Pinv} that $S_\vp\widetilde{\W}_0=\widetilde{\W}_0$. 
  Taking into account \reff{inv1} and \reff{inv2}, we get
  \begin{eqnarray}
    \label{Finv}
    \widetilde{S}_\vp F(w,p)&=&S_\vp (I-P)\left((I-\CC(\om,\la))(u+w)-\DD(\om,\la)\B(u+w,\om,\tau,\la)\right)\nonumber\\
    &=&(I-P)\left((I-\CC(\om,\la))(S_\vp u+S_\vp w)-\DD(\om,\la)\B(S_\vp u+S_\vp w,\om,\tau,\la)\right)\nonumber\\
    &=&F(S_\vp w,T_\vp p),
    \end{eqnarray}
 which gives \reff{symm}.

 To verify assumption \reff{infsmooth},
 recall that the infinitesimal generator of the group $S_\vp$ is the
 differential operator $A=\frac{d}{dt}$. Therefore,
$$
\W^l=\{w \in \W: \; \partial_tw,\partial_t^2w,\ldots,\partial_t^lw\in \W\},\;
\|w\|_l=\sum_{j=0}^l\|\partial_t^jw\|_\infty \mbox{ for } w \in \W^l.
$$
We have
\begin{eqnarray*}
  &&\partial_w[(I-P)\left((I-\CC(\om,\la))(u+w)-\DD(\om,\la)\B(u+w,\om,\tau,\la)\right)]w_1\\
  &&=(I-P)\left(I-\CC(\om,\la)-\DD(\om,\la)\partial_v\B(u+w,\om,\tau,\la)\right)w_1
\end{eqnarray*}
and
\begin{eqnarray*}
  &&\partial^k_w(I-P)\left((I-\CC(\om,\la))(u+w)-\DD(\om,\la)\B(u+w,\om,\tau,\la)\right)(w_1,\ldots,w_k)\\
  &&=-(I-P)\DD(\om,\la)\partial^k_v\B(u+w,\om,\tau,\la)(w_1,\ldots,w_k) \mbox{ for } k \ge 2.
\end{eqnarray*}
Taking into account that  any $u \in \ker \LL$ is $C^\infty$-smooth and 
satisfies the equality
$\|\partial_t^ju\|_\infty=\|u\|_\infty$   (cf. Lemma \ref{ker}), our task is reduced 
to show that for all $k,l\in\N$ and all $w,w_1,\ldots,w_k \in \W^l$ the functions
$\CC(\om,\la)w$ and 
$\DD(\om,\la)\partial^k_v\B(u+w,\om,\tau,\la)(w_1,\ldots,w_k)$
depend $C^l$-smoothly 
 on $(\om,\tau,\la)$
  and that condition  \reff{esti} is fulfilled.

The proof goes through two claims.
\begin{claim}
  \label{4.11}
  For all $l,m\in\N$ and $w \in \W^{l+m}$ the map $(\om,\la) \in \R^2 \mapsto \CC(\om,\la) w \in C_{2\pi}$ is $C^{l+m}$-smooth. Moreover, 
  \beq
  \label{apri1}
    \|\partial^l_\om\partial^m_\la\CC(\om,\la) w\|_\infty \le c_{lm}\|w\|_{l+m},
    \ee
    where the constant $c_{lm}$ does not depend on $\om$, $\la$ and $w$
     for $\om$ and $\la$ varying on bounded intervals.
\end{claim}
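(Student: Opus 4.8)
The plan is to reduce the claim to an elementary statement about composing a $C^{l+m}$-smooth $2\pi$-periodic function of one variable with a parameter-dependent time shift, and then to prove that statement by the chain rule together with the uniform continuity of continuous periodic functions.

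First, recall from \reff{Cdef} that the two components of $\CC(\om,\la)w$ are
$$
[\CC_1(\om,\la)w](t,x)=-c_1(x,0,\la)\,w_2\big(t+\om A(x,0,\la),0\big),\quad
[\CC_2(\om,\la)w](t,x)=c_2(x,1,\la)\,w_1\big(t-\om A(x,1,\la),1\big),
$$
so that $\CC(\om,\la)w$ depends on $(\om,\la)$ only through the $C^\infty$-functions $(x,\la)\mapsto c_1(x,0,\la),c_2(x,1,\la),A(x,0,\la),A(x,1,\la)$ on $[0,1]\times\R$ and through the shifted arguments $t\pm\om A(\cdot)$. If $w\in\W^{l+m}$, then the traces $g_0:=w_2(\cdot,0)$ and $g_1:=w_1(\cdot,1)$ are $2\pi$-periodic and $C^{l+m}$-smooth, with $\sum_{j=0}^{l+m}\|g_i^{(j)}\|_\infty\le\|w\|_{l+m}$. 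Hence it suffices to prove: for a $2\pi$-periodic $g\in C^n(\R)$ and $C^\infty$-functions $\alpha,\beta:[0,1]\times\R\to\R$, the map $(\om,\la)\mapsto\big[(t,x)\mapsto\alpha(x,\la)\,g(t+\om\beta(x,\la))\big]\in C_{2\pi}(\R\times[0,1])$ is $C^n$-smooth, and for every $l,m$ with $l+m=n$,
$$
\big\|\partial_\om^l\partial_\la^m\big(\alpha(x,\la)\,g(t+\om\beta(x,\la))\big)\big\|_\infty\le c_{lm}\sum_{j=0}^{n}\|g^{(j)}\|_\infty,
$$
with $c_{lm}$ bounded as $(\om,\la)$ ranges over a bounded set and depending only on $l,m$ and the $C^n$-norms of $\alpha,\beta$ on $[0,1]\times[-c,c]$.

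To prove this, one shows by induction that $\partial_\om^l\partial_\la^m\big(\alpha(x,\la)g(t+\om\beta(x,\la))\big)$ is a finite sum of terms $\gamma(x,\la)\,\om^r\,g^{(j)}(t+\om\beta(x,\la))$ with $0\le j\le l+m$, $r\ge0$, and $\gamma$ a universal polynomial in finitely many partial derivatives of $\alpha$ and $\beta$. The only point that requires attention here is the bookkeeping of time-derivatives: each $\partial_\om$ lowers the order of $g$ by exactly one, while each $\partial_\la$ either falls on a smooth coefficient (costing no $t$-derivative) or on the shift $\om\beta$, producing a factor $\om\,\partial_\la\beta$ and one further $t$-derivative; hence at most $l+m=n$ time-derivatives of $g$ are ever required --- precisely the regularity that is available in $\W^{l+m}$. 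Each such term is bounded in the $\sup$-norm by $\|\gamma\|_\infty\,|\om|^r\,\|g^{(j)}\|_\infty$, which yields \reff{apri1}. That these formal partial derivatives are the genuine Fr\'echet derivatives in $C_{2\pi}(\R\times[0,1])$, and that they depend continuously on $(\om,\la)$, follows from a first-order Taylor expansion of each $g^{(j)}$ and the uniform continuity on $\R$ of the finitely many continuous $2\pi$-periodic functions $g^{(0)},\dots,g^{(n)}$; this interchange of difference quotient and $\sup_{(t,x)}$ is the only slightly delicate step, and it is routine.

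Finally, apply this with $(\alpha,\beta)=\big(-c_1(\cdot,0,\cdot),A(\cdot,0,\cdot)\big)$, $g=g_0$ for the first component and $(\alpha,\beta)=\big(c_2(\cdot,1,\cdot),-A(\cdot,1,\cdot)\big)$, $g=g_1$ for the second, and add the two estimates; since $\sum_{j\le l+m}\|g_i^{(j)}\|_\infty\le\|w\|_{l+m}$, the resulting constant $c_{lm}$ is of the stated form and is uniform for $\om,\la$ in bounded intervals because the $C^n$-norms of $c_1,c_2,A$ on $[0,1]\times[-c,c]$ and the factors $|\om|^r$ are. I expect the main obstacle to be precisely the justification that the formal $(\om,\la)$-derivatives are genuine derivatives with respect to the $\sup$-norm; everything else is the chain rule and the smoothness of $c_1,c_2$ and $A$.
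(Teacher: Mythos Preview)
Your proof is correct and follows essentially the same approach as the paper: both arguments use the explicit formula \reff{Cdef}, apply the chain rule, and observe that each differentiation in $\om$ or $\la$ consumes at most one $t$-derivative of $w$, so that $w\in\W^{l+m}$ provides exactly the regularity required. Your version is in fact more thorough---you make explicit the reduction to the one-dimensional traces $g_0,g_1$, write out the inductive structure of the derivative terms, and address the passage from formal to genuine Fr\'echet derivatives via uniform continuity of periodic functions---whereas the paper contents itself with writing out the case $\partial_\om$ as an example and stating that the general case is similar.
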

  \begin{subproof}
  	Since $w(\cdot,x)$ is  $C^l$-smooth,
     definition \reff{Cdef} implies that  $\CC(\cdot,\cdot)w$ is $C^l$-smooth,
    and the derivatives can be calculated by the chain rule. For example,
    \beq
    \label{loss}
    \partial_\om[\CC(\om,\la)w](t,x)=
    \left[
      \begin{array}{c}
        -c_1(x,0,\la)\partial_tw_2(t+\om A(x,0,\la),0)A(x,0,\la)\\
        -c_2(x,1,\la)\partial_tw_1(t-\om A(x,1,\la),1)A(x,1,\la)
      \end{array}
    \right].
    \ee
    It follows that $\|\partial_\om[\CC(\om,\la)w\|_\infty \le \mbox{const}\|w\|_1$, where the constant does not depend on
    $\om$ and $\la$ (varying in  bounded intervals) and on $w \in \W^1$.

    Similarly one can handle $\partial_\la\CC(\om,\la)w$ and higher order derivatives, and similarly one can show 
\reff{apri1}.
  \end{subproof}
  \begin{rem}
    \label{rem:loss}
    In \reff{loss} the loss of derivatives property can be seen explicitely: Taking a derivative with
    respect to $\om$ leads to a derivative with respect to $t$. The same happens in formulas \reff{Dform}, \reff{Jform} and \reff{bform} below.
    \end{rem}
\begin{claim}
  \label{4.12}
  For all $k,l,m,n\in\N$, $u \in \ker \LL$ and $w,w_1,\ldots,w_k \in \W^{l+m+n}$,
the map $(\om,\tau,\la) \in \R^3 \mapsto \DD(\om,\la)\partial_v^k\B(u+w,\om,\tau,\la) \in C_{2\pi}$ is $C^{l+m+n}$-smooth. 
Moreover,
\beq
\label{apri2}
    \|\partial^l_\om\partial^m_\tau\partial^n_\la[\DD(\om,\la)\partial_v^k\B(u+w,\om,\tau,\la)(w_1,\ldots,w_k)\|_\infty
    \le c_{klmn}\|w_1\|_{l+m+n}\cdot\ldots\cdot\|w_k\|_{l+m+n},
    \ee
    where the constant $c_{klmn}$ does not depend on $\om$, $\tau$, $\la$, $u$  and $w$ for $\|u\|_\infty$, $\|w\|_{l+m+n}$,
    $\om$, $\tau$ and
    $\la$  varying on bounded intervals.
  \end{claim}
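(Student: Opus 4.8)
\begin{subproof}
The plan is, first, to expand $\partial_v^k\B(u+w,\om,\tau,\la)(w_1,\ldots,w_k)$ into a finite sum of elementary terms by the multilinear chain rule; second, to apply $\DD(\om,\la)$ to each of them, which turns every term into an integral along a characteristic; and third, to differentiate the resulting integrals with respect to $(\om,\tau,\la)$ under the integral sign, keeping track of which differentiations produce a loss of derivatives.

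Recall (cf.\ \reff{Bdef}, \reff{BBdef}) that, up to the linear terms $-\frac{1}{2}\partial_x a(x,\la)(v_1-v_2)$ and $-b_j(x,\la)v_j$, the component $\B_j(v,\om,\tau,\la)(t,x)$ equals $b\bigl(x,\la,[J_\la v](t,x),[J_\la v](t-\om\tau,x),[Kv](t,x),[K_\la v](t,x)\bigr)$, with $J_\la,K,K_\la$ (cf.\ \reff{Jdef}, \reff{Kdef}) linear in $v$. Since $\partial_v^j$ of a linear map vanishes for $j\ge 2$, the multilinear chain rule (Fa\`a di Bruno) applied to this composition, at $v=u+w$, produces a finite sum of terms of the form
\[
q(x,\la)\,\partial^\alpha b\bigl(x,\la,[J_\la v](t,x),[J_\la v](t-\om\tau,x),[Kv](t,x),[K_\la v](t,x)\bigr)\Big|_{v=u+w}\,\prod_{i=1}^{k}[M_iw_i](t,x),
\]
where $q$ is $C^\infty$ ($\partial_x a$ entering $q$ only in the linear part, which survives only for $k=1$), $\alpha$ is a multi-index of order $k$ acting on the last four variables of $b$, and each $M_i$ is one of the operators $J_\la$, $S_{-\om\tau}J_\la$, $K$, $K_\la$ (with $S_\vp$ as in \reff{S}), all bounded uniformly in $\om,\tau$ and locally in $\la$ by \reff{bounded}; for $k=1$ there are in addition the genuinely linear summands $-\frac{1}{2}\partial_x a(x,\la)(w_{1,1}-w_{1,2})$ and $-b_j(x,\la)w_{1,j}$. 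Applying $\DD(\om,\la)$ (cf.\ \reff{Ddef}) turns each such term, componentwise, into an integral $\pm\int\frac{c_j(x,\xi,\la)}{a(\xi,\la)}\bigl[\mbox{that term at }(\cdot,\xi)\bigr]\big|_{t\mapsto t\pm\om A(x,\xi,\la)}\,d\xi$ over $[0,x]$ or $[x,1]$.

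One then differentiates these integrals $l$ times in $\om$, $m$ times in $\tau$ and $n$ times in $\la$, differentiation under the integral sign being justified by the $C^{l+m+n}$-regularity of $w,w_1,\ldots,w_k$ in $t$. All the coefficient functions $c_j,a,q,\partial_x a,b_j$ and all derivatives $\partial^\alpha b$ are $C^\infty$ in their arguments, $A(x,\xi,\la)$ is $C^\infty$ in $\la$, and on the bounded range of $(\om,\tau,\la,u,w)$ under consideration the arguments fed to $\partial^\alpha b$ stay in a fixed compact set (because $J_\la,K,K_\la$ are bounded uniformly for $\la$ on bounded intervals and $\|u\|_\infty,\|w\|_\infty$ are bounded); hence all these quantities and their derivatives are uniformly bounded. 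A genuine $\partial_t$ is produced only (a) when the shift $t\pm\om A(x,\xi,\la)$ is differentiated with respect to $\om$ or $\la$, which yields the bounded factor $A(x,\xi,\la)$, resp.\ $\om\,\partial_\la A(x,\xi,\la)$, times $\partial_t$ of the shifted integrand --- exactly the phenomenon visible in \reff{loss} and in the proof of Claim \ref{4.11} --- and (b) when the delay $t-\om\tau$ inside $\partial^\alpha b$ and inside the factors $M_iw_i=S_{-\om\tau}J_\la w_i$ is differentiated with respect to $\om$ or $\tau$, which yields the bounded factor $\tau$, resp.\ $\om$, times a $\partial_t$. In each case $\partial_t$ commutes with $J_\la,K,K_\la$ and with $S_\vp$, so a $\partial_t$ landing on a factor $M_iw_i$ becomes $M_i\partial_t w_i$, while a $\partial_t$ landing on an inner argument $[J_\la v]$ (or $[Kv]$, $[K_\la v]$) at $v=u+w$ becomes $[J_\la\partial_t(u+w)]$ (resp.\ $[K\partial_t(u+w)]$, $[K_\la\partial_t(u+w)]$), which stays uniformly bounded because $\|w\|_{l+m+n}$ and $\|u\|_\infty$ are bounded --- here one also uses that, by Lemma \ref{ker}, every $u\in\ker\LL$ is $C^\infty$ with $\|\partial_t^j u\|_\infty=\|u\|_\infty$, so $u$-contributions cause no loss. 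Since each of the $l+m+n$ parameter-differentiations contributes at most one extra $\partial_t$, the highest $t$-derivative reaching any single $w_i$ is $\le l+m+n$. Collecting terms, $\partial^l_\om\partial^m_\tau\partial^n_\la\bigl[\DD(\om,\la)\partial_v^k\B(u+w,\om,\tau,\la)(w_1,\ldots,w_k)\bigr]$ is a finite sum of $\xi$-integrals, each integrand a product of uniformly bounded factors (the coefficients together with all $\partial_t$'s hitting the inner $b$-arguments) times exactly $k$ factors $M_i\partial_t^{j_i}w_i$ with $j_i\le l+m+n$; taking $\|\cdot\|_\infty$ and using $\|M_i\partial_t^{j_i}w_i\|_\infty\le\mbox{const}\,\|w_i\|_{l+m+n}$ gives $\le c_{klmn}\|w_1\|_{l+m+n}\cdots\|w_k\|_{l+m+n}$, i.e.\ \reff{apri2}; and the $C^{l+m+n}$-smoothness follows from the standard theorem on differentiating parameter-dependent integrals, the integrand being jointly continuous in $(\xi,t,x,\om,\tau,\la)$ after each differentiation.

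The main obstacle is the combined loss-of-derivatives accounting: one must check that the $\partial_t$'s generated by the outer operator $\DD(\om,\la)$ (through $\partial_\om,\partial_\la$ of $\om A$) together with those generated inside $\B$ (through $\partial_\om,\partial_\tau$ of the delay $t-\om\tau$) never force more than $l+m+n$ time-derivatives onto $w_1,\ldots,w_k$, and that the surplus $\partial_t$'s falling instead onto the internal arguments $[J_\la v],[Kv],[K_\la v]$ of $\partial^\alpha b$ at $v=u+w$ do no harm. The latter is the delicate point in the estimate \reff{apri2}: those surplus factors must be absorbed into the constant $c_{klmn}$ and must not appear as extra $\|w\|$-factors on the right-hand side, which works precisely because $J_\la,K,K_\la$ are bounded, commute with $\partial_t$, and $\|u\|_\infty,\|w\|_{l+m+n}$ are bounded, so each such factor is $O(1)$. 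The remaining ingredients --- the chain-rule expansion, rewriting $\DD(\om,\la)$ of each term as a characteristic integral, and differentiation under the integral sign --- are routine.
\end{subproof}
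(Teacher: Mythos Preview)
Your proof is correct and follows essentially the same approach as the paper. The paper works out only the case $k=1$ and the first $\om$-derivative in detail (writing $\partial_\om\DD(\om,\la)w=\widetilde{\DD}(\om,\la)\partial_tw$ and expanding $\partial_v\B=\tilde{\J}+\widetilde{\KK}$ explicitly), then dismisses the remaining $\partial_\tau,\partial_\la$ and higher derivatives with ``similarly''; you instead treat all $k,l,m,n$ at once via the Fa\`a di Bruno expansion and a systematic loss-of-derivative count, but the mechanism---each differentiation in $\om,\tau,\la$ generates at most one $\partial_t$, landing either on a factor $M_iw_i$ (controlled by $\|w_i\|_{l+m+n}$) or on an inner argument $[J_\la(u+w)]$, $[K(u+w)]$, $[K_\la(u+w)]$ (absorbed into $c_{klmn}$ by the boundedness of $\|u\|_\infty,\|w\|_{l+m+n}$)---is identical.
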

  \begin{subproof}
    Differentiation of \reff{Ddef} with respect to $\om$ gives
    \beq
    \label{Dform}
    \partial_\om\DD(\om,\la)w=\widetilde{\DD}(\om,\la)\partial_tw \mbox{ for } w \in \W^1,
    \ee
    where
    $$
      \begin{array}{cc}
      [\widetilde{\DD}_1(\om,\la)w](t,x):=   \displaystyle-\int_0^x\frac{c_1(x,\xi,\la)}{a(\xi,\la)}w_1(t+\om A(x,\xi,\la),\xi)A(x,\xi,\la)\,d\xi,\\ [4mm]
      [\widetilde{\DD}_2(\om,\la)w](t,x):=    \displaystyle-\int_x^1\frac{c_2(x,\xi,\la)}{a(\xi,\la)}w_2(t-\om A(x,\xi,\la),\xi)A(x,\xi,\la)\,d\xi.
      \end{array}
  %  \right].
    $$
    Hence, for  $v,w \in \W^1$, it holds
    \beq
    \label{Deq}
    \partial_\om[\DD(\om,\la)\partial_v\B(v,\om,\tau,\la)w]= \widetilde{\DD}(\om,\la)\partial_t[\partial_v\B(v,\om,\tau,\la)w]+
    \DD(\om,\la)\partial_\om[\partial_v\B(v,\om,\tau,\la)w].
    \ee
    Furthermore, similarly to \reff{diffB}, we have
    $$
    \partial_v\B(v,\om,\tau,\la)=\tilde{\J}(v,\om,\tau,\la)+\widetilde{\KK}(v,\om,\tau,\la),
    $$
    where
    $$
[\tilde{\J}_j(v,\om,\tau,\la)w](t,x):=\tilde{b}_3(t,x,v,\om,\tau,\la)[J_\la w](t,x)+
\tilde{b}_4(t,x,v,\om,\tau,\la)[J_\la w](t-\om\tau,x), \; j=1,2,
$$
and
$$
\begin{array}{cc}
[\widetilde{\KK}_1(v,\om,\tau,\la)w](t,x):=\tilde{b}_2(t,x,v,\om,\tau,\la)w_2(t,x),\\ [3mm]
[\widetilde{\KK}_2(v,\om,\tau,\la)w](t,x):=\tilde{b}_1(t,x,v,\om,\tau,\la)w_1(t,x).
\end{array}
$$
Here the coefficients $\tilde{b}_k$ are defined appropriately
(similarly to \reff{bdef1} and \reff{b1def}), as follows:
$$
\tilde{b}_k(t,x,v,\om,\tau,\la):=\partial_jb(x,\la,[J_\la v](t,x),[J_\la v](t-\om\tau,x),[Kv](t,x),[K_\la v](t,x)) \mbox{ for } k=3,4,5,6
$$
and
$$
  \begin{array}{rcl}
  \tilde  b_1(t,x,v,\om,\tau,\la)&:=&\displaystyle\frac{1}{2}\left(-\partial_xa(x,\la)
                                +\tilde{b}_5(t,x,v,\om,\tau,\la)+\frac{\tilde{b}_6(t,x,v,\om,\tau,\la)}{a(x,\la)}\right),\;\\
  \tilde  b_2(t,x,v,\om,\tau,\la)&:=&\displaystyle\frac{1}{2}\left(\partial_xa(x,\la)
                                +\tilde{b}_5(t,x,v,\om,\tau,\la)-\frac{\tilde{b}_6(t,x,v,\om,\tau,\la)}{a(x,\la)}\right).
  \end{array}
  $$
Now, $[\partial_v\B_j(v,\cdot,\cdot,\cdot)w](\cdot,x)$ is $C^l$-smooth
because $v(\cdot,x)$ and  $w(\cdot,x)$ are  $C^l$-smooth. The derivatives can be calculated by the product and chain rules.
In particular, for $v,w \in \W^1$ we have
 $$
    \partial_\om\partial_v\B(v,\om,\tau,\la)=\partial_\om\tilde{\J}(v,\om,\tau,\la)+\partial_\om\widetilde{\KK}(v,\om,\tau,\la),
    $$
    where
    \begin{eqnarray}
      \label{Jform}
      [\partial_\om\tilde{\J}_j(v,\om,\tau,\la)w](t,x)&=&
      \partial_\om\tilde{b}_3(t,x,v,\om,\tau,\la)[J_\la w](t,x)+
                                                          \partial_\om\tilde{b}_4(t,x,v,\om,\tau,\la)[J_\la w](t-\om\tau,x)\nonumber\\
      &&-\tau\tilde{b}_4(t,x,v,\om,\tau,\la)[J_\la \partial_tw](t-\om\tau,x)
    \end{eqnarray}
    and
\begin{eqnarray*}
&&[\partial_\om\widetilde{\KK}_1(v,\om,\tau,\la)w](t,x)=\frac{1}{2}\left(
                                \partial_\om \tilde{b}_5(t,x,v,\om,\tau,\la)+\frac{\partial_\om \tilde{b}_6(t,x,v,\om,\tau,\la)}{a(x,\la)}\right)w_2(t,x),\\
&&[\partial_\om\widetilde{\KK}_2(v,\om,\tau,\la)w](t,x)=\frac{1}{2}\left(
                                \partial_\om \tilde{b}_5(t,x,v,\om,\tau,\la)-\frac{\partial_\om \tilde{b}_6(t,x,v,\om,\tau,\la)}{a(x,\la)}\right)w_1(t,x).
\end{eqnarray*}
Moreover, for  $k=3,4,5,6$,
\begin{eqnarray}
\label{bform}
  &&\partial_\om\tilde{b}_k(t,x,v,\om,\tau,\la)\nonumber\\
  &&=
-\tau \partial_4\partial_jb(x,\la,[J_\la v](t,x),[J_\la v](t-\om\tau,x),[Kv](t,x),[K_\la v](t,x))[J_\la \partial_t v](t-\om\tau,x). 
\end{eqnarray}
The functions $\partial_\om\tilde{b}_k$ are bounded as long as as ´$\|v\|_1$ $\om$, $\tau$ and
 $\la$ are bounded.
 Hence, we have
 $$
 \|\partial_\om[\partial_v\B_j(v,\om,\tau,\la)w]\|_\infty \le \mbox{const}\|w\|_1,
 $$
  where the constant does not depend on $\om$, $\tau$, $\la$, $v$  and $w$ as long as $\|v\|_1$, $\|w\|_1$  $\om$, $\tau$ and
  $\la$ are bounded. Similarly one shows
   $
 \|\partial_t[\partial_v\B_j(v,\om,\tau,\la)w]\|_\infty \le \mbox{const}\|w\|_1.
 $
 Using \reff{Deq} we get
 $$
 \|\partial_\om[\DD(\om,\la)\partial_v\B(v,\om,\tau,\la)w]\|_\infty \le \mbox{const}\|w\|_1,
 $$
  where the constant does not depend on $\om$, $\tau$, $\la$, $v$  and $w$ as long as $\|v\|_1$, $\|w\|_1$  $\om$, $\tau$ and
  $\la$ are bounded. Similarly one shows the estimates \reff{apri2} for $\partial_\tau[\DD(\om,\la)\partial_v\B(v,\om,\tau,\la)w]$ and 
 $\partial_\la[\DD(\om,\la)\partial_v\B(v,\om,\tau,\la)w]$
and for the higer order derivatives.
\end{subproof}

Finally, we verify  assumption \reff{coerz} of  Theorem  \ref{thm:IFT}. 

\begin{claim}
  \label{4.13}
  There exist $\delta>0$ and $c>0$ such that, for   all $u \in \ker \LL$ and $\om,\tau,\la \in \R$
  with $\|u\|_\infty+|\om-1|+|\tau-\tau_0|+|\la|<\delta$,
it holds
  $$
  \|(I-P)\left(I-\CC(\om,\la)-\DD(\om,\la)(\J(\om,\tau,\la)+\K(\la))\right)w\|_\infty \ge c\|w\|_\infty \mbox{ for all } w \in \W.
  $$
\end{claim}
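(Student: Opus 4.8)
The plan is to prove the estimate by a contradiction/compactness argument that is uniform in $(\om,\tau,\la)$ near $(1,\tau_0,0)$; note that the operator in the present Claim does not actually depend on $u$, so the hypothesis $\|u\|_\infty+|\om-1|+|\tau-\tau_0|+|\la|<\delta$ only constrains $(\om,\tau,\la)$. Suppose the assertion were false. Then there would be sequences $(\om_n,\tau_n,\la_n)\to(1,\tau_0,0)$ and $w_n\in\W$ with $\|w_n\|_\infty=1$ such that $f_n:=(I-P)\Lambda_n w_n\to 0$ in $C_{2\pi}$, where $\Lambda_n:=I-\CC_n-\DD_n(\J_n+\K_n)$ with $\CC_n:=\CC(\om_n,\la_n)$, $\DD_n:=\DD(\om_n,\la_n)$, $\J_n:=\J(\om_n,\tau_n,\la_n)$ and $\K_n:=\KK(\la_n)$. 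Once one knows that $\{w_n\}$ is relatively compact in $C_{2\pi}$, the argument closes quickly: along a subsequence $w_n\to w_*$ in $C_{2\pi}$, with $w_*\in\W$ ($\W$ being closed) and $\|w_*\|_\infty=1$. Although $\CC(\om,\la)$, $\DD(\om,\la)$ and $\J(\om,\tau,\la)$ are not continuous in their parameters in the operator norm, when applied to the \emph{convergent} sequence $w_n$ each of them converges in $C_{2\pi}$ to its value at $(\om,\la,\tau)=(1,0,\tau_0)$, because in the defining integrals and shifted traces the coefficients and the time arguments converge uniformly and the limit $w_*$ is uniformly continuous; hence $\Lambda_n w_n\to\LL w_*$ in $C_{2\pi}$ with $\LL$ as in \reff{LLdef}--\reff{CDdef}. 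Consequently $(I-P)\LL w_*=\lim f_n=0$, so $\LL w_*\in\ker(I-P)=\im P$; but $\LL w_*\in\im\LL=\ker P$ by \reff{kerP}, and $\im P\cap\ker P=\{0\}$, whence $\LL w_*=0$. Since moreover $w_*\in\W$ and $C_{2\pi}=\ker\LL\oplus\W$ by \reff{split}, we get $w_*=0$, contradicting $\|w_*\|_\infty=1$.

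So the only genuine point — and the main obstacle — is the relative compactness of $\{w_n\}$: a naive perturbation argument is unavailable precisely because of the operator-norm discontinuity just mentioned. The way around it is the smoothing gained by the compositions in Claims \ref{Jlemm}--\ref{DKClemm} together with the Nikolskii-type identity \reff{Cident}. From $(I-\CC_n)w_n=\Lambda_n w_n+\DD_n\J_n w_n+\DD_n\K_n w_n=f_n+P\Lambda_n w_n+\DD_n\J_n w_n+\DD_n\K_n w_n$ I would apply $(I-\CC_n)^{-1}$, which for large $n$ is bounded uniformly on $C_{2\pi}$ by \reff{I-C-1} and on $C^1_{2\pi}$ by \reff{Cdelta}, to obtain
$$
w_n=T_nw_n+r_n+h_n,\qquad T_n:=(I-\CC_n)^{-1}\DD_n\K_n,
$$
where $r_n:=(I-\CC_n)^{-1}(\DD_n\J_n w_n+P\Lambda_n w_n)$ is bounded in $C^1_{2\pi}$ --- by Claim \ref{Jlemm} for the $\DD_n\J_n w_n$ term, and because $P$ has rank two with $w^1,w^2\in C^1_{2\pi}$ while $P\Lambda_n w_n$ is bounded by \reff{bounded} for the second term --- and $h_n:=(I-\CC_n)^{-1}f_n\to0$ in $C_{2\pi}$. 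Substituting this identity for $w_n$ once more into its right-hand side gives
$$
w_n=T_n^2w_n+T_nr_n+r_n+T_nh_n+h_n.
$$

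To conclude, I would show that each of the first three terms is bounded in $C^1_{2\pi}$ and the last two tend to $0$ in $C_{2\pi}$, so that $\{w_n\}$ is relatively compact by the compact embedding $C^1_{2\pi}\hookrightarrow C_{2\pi}$. For $T_n^2$ one uses \reff{Cident}: its right-hand side is $(I-\CC_n)^{-1}\big((\DD_n\K_n)^2+\DD_n\K_n\CC_n(I-\CC_n)^{-1}\DD_n\K_n\big)$, and $(\DD_n\K_n)^2=(\DD_n\K_n\DD_n)\K_n$ maps $C_{2\pi}$ into $C^1_{2\pi}$ uniformly by Claim \ref{DKDlemm}, $\DD_n\K_n\CC_n(I-\CC_n)^{-1}\DD_n\K_n$ maps $C_{2\pi}$ into $C^1_{2\pi}$ uniformly by Claim \ref{DKClemm} together with \reff{I-C-1}, and $(I-\CC_n)^{-1}$ maps $C^1_{2\pi}$ into $C^1_{2\pi}$ uniformly by \reff{Cdelta}; hence $T_n^2w_n$ is bounded in $C^1_{2\pi}$. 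For $T_nr_n$ one uses that $r_n$ is bounded in $C^1_{2\pi}$, that $\K_n$ and $\DD_n$ map $C^1_{2\pi}$ into $C^1_{2\pi}$ with uniform bounds (for $\DD_n$ this is a routine computation: differentiating along characteristics in $x$ produces a $t$-derivative, the usual loss of derivatives, cf.~\reff{loss}), and again \reff{Cdelta}. Finally $T_nh_n\to0$ since $h_n\to0$ in $C_{2\pi}$ and $T_n$ is uniformly bounded on $C_{2\pi}$, and $r_n$ is bounded in $C^1_{2\pi}$ by construction. This yields the required relative compactness and completes the contradiction.

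I expect everything except the compactness step to be routine bookkeeping. (Strictly speaking, the operator that enters assumption \reff{coerz} of Theorem \ref{thm:IFT} is $\partial_wF(0,p)=(I-P)(I-\CC(\om,\la)-\DD(\om,\la)\partial_v\B(u,\om,\tau,\la))$, which differs from the operator of the present Claim by $(I-P)\DD(\om,\la)\big(\partial_v\B(u,\om,\tau,\la)-\partial_v\B(0,\om,\tau,\la)\big)$; its operator norm is $O(\|u\|_\infty)$ since $\B(\cdot,\om,\tau,\la)$ is $C^\infty$-smooth and $\DD(\om,\la)$ is uniformly bounded by \reff{bounded}, so the coercivity of $\partial_wF(0,p)$ with a slightly smaller constant follows from the present Claim after shrinking $\delta$.)
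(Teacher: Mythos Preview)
Your argument is correct and follows the same contradiction-plus-compactness scheme as the paper's: one iteration of the fixed-point relation, combined with the smoothing of Claims \ref{Jlemm}--\ref{DKClemm} and the identity \reff{Cident}, gives relative compactness of $\{w_n\}$, and passing to the limit yields $w_*\in\ker\LL\cap\W=\{0\}$. The paper's bookkeeping differs only in that it bundles the $\J$-, $\KK$- and $P$-contributions into a single operator $\EE_n:=(I-\CC_n)^{-1}\big(\DD_n(\J_n+\KK_n)+P\Lambda_n\big)$ before squaring (see \reff{Epred}--\reff{Rdef}), and in the final step it verifies $\LL w_*=0$ by testing weakly against arbitrary $h\in C_{2\pi}$ rather than by your direct strong-convergence argument via the uniform continuity of~$w_*$.
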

\begin{subproof} We will follow ideas which are used to prove coercitivity estimates for singularly perturbed linear differential operators
	(see, e.g., \cite[Lemma 1.3]{Magnus} and \cite[Section 3]{RO}).
  
Suppose the contrary. Then there exist sequences $w_n \in \W$, $u_n \in \ker \LL$ and $(\om_n,\tau_n,\la_n) \in \R^3$
  such that
  \beq
  \label{one}
  \|w_n\|_\infty=1 \mbox{ for all } n \in \N,
  \ee
\beq
  \label{tozero1}
  \|u_n\|_\infty+|\om_n-1|+|\tau_n-\tau_0|+|\la_n| \to 0 \mbox{ as } n\to\infty
  \ee
  and
\beq
\label{tozero2}
\|(I-P)\left(I-\CC(\om_n,\la_n)-\DD(\om_n,\la_n)(\J(\om_n,\tau_n,\la_n)+\KK(\la_n))\right)w_n\|_\infty \to 0 \mbox{ as } n\to\infty.
\ee
We have to construct a contradiction.

For the sake of simpler writing we will use the following notation:
\begin{eqnarray*}
  &&\CC_n:=\CC(\om_n,\la_n),\; \DD_n:=\DD(\om_n,\la_n)(\J(\om_n,\tau_n,\la_n)+\KK(\la_n)),\\
  &&\EE_n:=(I-\CC_n)^{-1}(\DD_n+P(I-\CC_n-\DD_n)).
\end{eqnarray*}
Note that  the operators $\EE_n$ are well defined due to Claim \ref{C1}
from Section \ref{Fredholmness of the linearization}.
By assumption \reff{tozero2}, we have 
\beq
\label{tozero3}
\|(I-P)(I-\CC_n-\DD_n)w_n\|_\infty=\|(I-\CC_n)(I-\EE_n)w_n\|_\infty \to 0.
\ee
Moreover, because of \reff{I-C-1} it follows that $\|(I-\EE_n)w_n\|_\infty \to 0$ and, on the account of
\reff{bounded} and \reff{I-C-1}, that
\beq
\label{tozero4}
\|(I+\EE_n)(I-\EE_n)w_n\|_\infty=\|(I-\EE_n^2)w_n\|_\infty \to 0.
\ee

Let us show that the sequence $\EE_n^2w_n$ is bounded in the space $C^1_{2 \pi}$.
A straightforward calculation shows that
\beq
\label{Epred}
\EE_n^2=(I-\CC_n)^{-1}\left(\DD_n^2+\DD_n\CC_n(I-\CC_n)^{-1}\DD_n+\RR_n\right)
\ee
with
\beq
  \label{Rdef}
  \RR_n:=\DD_n(I-\CC_n)^{-1}P(I-\CC_n-\DD_n)+P(I-\CC_n-\DD_n)(I-\CC_n)^{-1}(\DD_n+P(I-\CC_n-\DD_n)).
\ee
From \reff{bounded}, \reff{Cdelta}, \reff{one} and \reff{Epred} follows that, in order to  show that $\EE_n^2w_n$ is bounded in $C^1_{2 \pi}$,
it suffices to show that the operators sequences $\DD_n^2$, $\DD_n\CC_n$ and $\RR_n$ are bounded with respect to the uniform operator norm
in $\LL(C_{2\pi};C^1_{2\pi})$. Let us start with
$$
\DD_n^2=\DD(\om_n,\la_n)(\J(\om_n,\tau_n,\la_n)+\KK(\la_n))\DD(\om_n,\la_n)(\J(\om_n,\tau_n,\la_n)+\KK(\la_n)).
$$
This sequence is bounded in $\LL(C_{2\pi};C^1_{2\pi})$ because of \reff{bounded}, \reff{sup1} and \reff{sup2}. Then consider
$$
\DD_n\CC_n=\DD(\om_n,\la_n)(\J(\om_n,\tau_n,\la_n)+\KK(\la_n))\CC(\om_n,\la_n).
$$
This sequence is bounded in $\LL(C_{2\pi};C^1_{2\pi})$ because of \reff{bounded}, \reff{sup1} and \reff{sup3}. And finally, the operator sequence $\RR_n$ 
is bounded in $\LL(C_{2\pi};C^1_{2\pi})$ because of \reff{bounded}, \reff{I-C-1}, \reff{Cdelta} and because the projection $P$ belongs to  $\LL(C_{2\pi};C^1_{2\pi})$
(cf. \reff{Pdef}).

Let us summarize: We showed that
the sequence $\EE_n^2w_n$ is bounded in $C^1_{2 \pi}$.
Because of the  Arzela-Ascoli Theorem, 
without loss of generality we may assume that this sequence 
converges in  $C_{2 \pi}$
to some function $w_* \in C_{2 \pi}$. 
Then \reff{tozero4} implies the convergence
\beq
\label{tozero5}
\|w_n-w_*\|_\infty \to 0.
\ee
In particular, $w_* \in \W$.

If we would have
\beq
\label{contra}
\LL w_*=(I-\CC-\DD(\J+\KK))w_*=0,
\ee
then it would follow $w_* \in \ker \LL \cap \W$ and, on the account of  \reff{split}, $w_*=0$, contradicting  \reff{one} and \reff{tozero5}. 
Hence, it remains to prove \reff{contra}.

In order to  prove \reff{contra}, we take arbitrary $w,h \in C_{2\pi}$ and calculate
\begin{eqnarray*}
  &&\langle (\CC_n-\CC)w,h\rangle\\
  &&=\frac{1}{2\pi}\int_0^{2\pi}\int_0^1\Big(\left[-c_1(x,0,\la_n)w_2(t-\om_nA(x,0,\la_n),0)+c_1(x,0,0)w_2(t-A(x,0,0),0)\right]h_1(t,x)\\
  &&\;\;\;\;\;\;\;\;+\left[c_2(x,1,\la_n)w_1(t+\om_nA(x,1,\la_n),1)-c_2(x,1,0)w_1(t+A(x,1,0),1)\right]h_2(t,x)\Big)dxdt\\
 &&=\frac{1}{2\pi}\int_0^{2\pi}\int_0^1\Big(\left[-c_1(x,0,\la_n)h_1(t+\om_nA(x,0,\la_n),x)+c_1(x,0,0)h_1(t+A(x,0,0),x)\right]w_2(t,0)\\
  &&\;\;\;\;\;\;\;\;+\left[c_2(x,1,\la_n)h_2(t-\om_nA(x,1,\la_n),x)-c_2(x,1,0)h_2(t-A(x,1,0),x)\right]w_1(t,1)\Big)dxdt.
\end{eqnarray*}
Hence,
$$
\lim_{n \to \infty}\sup_{\|w\|_\infty \le 1}\langle (\CC_n-\CC)w,h\rangle=0 \mbox{ for all } h \in C_{2\pi}.
$$
Similarly one shows  for all $h_1,h_2,h_3 \in C_{2\pi}$ the convergence
$$
\lim_{n \to \infty}\left(\sup_{\|w\|_\infty \le 1}\langle (\DD_n-\DD)w,h_1\rangle+
\sup_{\|w\|_\infty \le 1}\langle (\J_n-\J)w,h_2\rangle
+\sup_{\|w\|_\infty \le 1}\langle (\KK_n-\KK)w,h_3\rangle\right)=0.
$$
Therefore, we get from 
\reff{kerP}, \reff{one} and \reff{tozero3} that
$$
0=\lim_{n \to \infty}\langle(I-P)(I-\CC_n-\DD_n(\J_n+\KK_n))w_n,h\rangle=\langle(I-P)(I-\CC-\DD(\J+\KK))w_*,h\rangle
=\langle \LL w_*,h \rangle
$$
for all $h \in C_{2\pi}$, i.e., \reff{contra} is true.
\end{subproof}

We have  shown
 that Theorem \ref{thm:IFT} can be applied to equation  \reff{ex} in the setting \reff{setting}. This implies the following fact.
\begin{claim}
  \label{4.14}
  There exist $\eps>0$ and $\delta>0$
  such that for all $u \in \UU_\eps$ and $(\om,\tau,\la)\in \PP_\eps$ there is a unique solution $w=\hat{w}(u,\om,\tau,\la) \in \W$
  to \reff{ex} with $\|w\|_\infty <\delta$.
  Moreover, for all $k\in\N$ the partial derivatives $\partial_t^k\hat{w}(u,\om,\tau,\la)$ exist and belong to $C_{2\pi}$, and the map
  $(u,\om,\tau,\la) \in  \UU_\eps\times\PP_\eps\mapsto \partial_t^k\hat{w}(u,\om,\tau,\la) \in C_{2\pi}$ is $C^\infty$-smooth.
\end{claim}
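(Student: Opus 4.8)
The plan is to obtain Claim~\ref{4.14} as an immediate consequence of the generalized implicit function theorem, Theorem~\ref{thm:IFT}, applied in the setting~\reff{setting}; all the substantial work has already been carried out in the preceding part of this subsection, so what remains is to collect the verified hypotheses and to translate the abstract conclusion into the concrete statement. First I would note that conditions~\reff{null} and \reff{Fsmooth} were recorded right after~\reff{setting}, condition~\reff{FH} is Lemma~\ref{Fredia}, and the equivariance condition~\reff{symm} is precisely the computation~\reff{Finv}. Next, conditions~\reff{infsmooth} and \reff{esti} follow from Claims~\ref{4.11} and~\ref{4.12}: after using $\|\partial_t^j u\|_\infty=\|u\|_\infty$ for $u\in\ker\LL$ (Lemma~\ref{ker}), the $C^{l+m}$- and $C^{l+m+n}$-smoothness in $(\om,\tau,\la)$ together with the bounds~\reff{apri1} and~\reff{apri2} for $\CC(\om,\la)w$ and for $\DD(\om,\la)\partial_v^k\B(u+w,\om,\tau,\la)(w_1,\dots,w_k)$ are exactly what~\reff{infsmooth}, \reff{esti} require. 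Finally, the coercivity condition~\reff{coerz} is Claim~\ref{4.13}. Hence all assumptions of Theorem~\ref{thm:IFT} are in force.

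Applying Theorem~\ref{thm:IFT} then produces $\eps>0$ and $\delta>0$ such that for every $p=(u,\om-1,\tau-\tau_0,\la)$ with $\|p\|\le\eps$ there is a unique $w=\hat w(p)\in\W_0=\W$ solving $F(w,p)=0$, i.e.~\reff{ex}, with $\|w\|_\infty\le\delta$; shrinking $\eps$ slightly replaces the closed ball $\|p\|\le\eps$ by the open neighbourhood $\UU_\eps\times\PP_\eps$ and the bound $\le\delta$ by $<\delta$, which gives the existence and uniqueness part of the claim. For the regularity part I would use that here the infinitesimal generator of $S_\vp$ is $A=d/dt$, so the abstract space $\W_k=D(A^k)$ coincides with $\W^k=\{w\in\W:\partial_t w,\dots,\partial_t^k w\in\W\}$; hence $\hat w(p)\in\W_k$ for all $k$ says exactly that the derivatives $\partial_t^j\hat w(u,\om,\tau,\la)$ exist and belong to $\W\subseteq C_{2\pi}$. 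Moreover Theorem~\ref{thm:IFT} asserts that $p\mapsto\hat w(p)\in\W_k$ is $C^\infty$-smooth for every $k$; composing with the bounded linear map $\partial_t^k:\W_k\to C_{2\pi}$ then yields that $(u,\om,\tau,\la)\mapsto\partial_t^k\hat w(u,\om,\tau,\la)\in C_{2\pi}$ is $C^\infty$-smooth, which is the remaining assertion.

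I do not expect a genuine obstacle at this point: the loss-of-derivatives phenomenon (a $(\om,\tau,\la)$-derivative costs a $t$-derivative), the non-compactness of $\CC+\DD\partial_v\B$ that forced the Nikolskii/Fredholm detour, and the coercivity estimate proved by a singular-perturbation-type contradiction argument have all been settled in Claims~\ref{4.11}--\ref{4.13} and Lemma~\ref{Fredia}. The only mildly delicate point is the bookkeeping just described --- identifying $\W_k$ with $\W^k$ and restating ``$\hat w(p)\in\W_k$, smoothly in $p$'' in terms of $\partial_t^k\hat w$ --- together with keeping in mind that Claim~\ref{4.14} deliberately asserts regularity only in $t$, not yet the full $C^k_{2\pi}$ regularity of Lemma~\ref{4.10}, whose spatial part has to be recovered afterwards from the structure of the integral equation~\reff{IE}.
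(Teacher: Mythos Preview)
Your proposal is correct and matches the paper's approach exactly: the paper also obtains Claim~\ref{4.14} simply by stating that all hypotheses of Theorem~\ref{thm:IFT} have been verified in the preceding claims and then reading off the conclusion in the setting~\reff{setting}. Your additional bookkeeping (identifying $\W_k$ with $\W^k$ via $A=d/dt$ and noting that only $t$-regularity is claimed here) is a helpful elaboration of what the paper leaves implicit.
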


In order to finish the  proof of Lemma \ref{4.10}
by using Claim \ref{4.14},
we have to show that for all $k \in \N$ it holds
\beq
\label{wsmooth}
\hat{w}(u,\om,\tau,\la) \in C^k_{2\pi} \mbox{ for all } (u,\om,\tau,\la) \in  \UU_\eps\times\PP_\eps
\ee
and that the map
\beq
\label{wsmooth1}
(u,\om,\tau,\la) \in  \UU_\eps\times\PP_\eps \mapsto \hat{w}(u,\om,\tau,\la) \in C^k_{2\pi} \mbox{ is $C^\infty$-smooth}.
\ee

Let us first prove \reff{wsmooth}. We use induction with respect to $k$. For  $k=0$ condition \reff{wsmooth} is true
because of Claim  \ref{4.14}.

In order to do the induction step we use that  $\hat{w}(u,\om,\tau,\la)\in \W^{k+1}$ (because of Claim  \ref{4.14}) and that
$\hat{w}(u,\om,\tau,\la)\in C^k_{2\pi}$ (because of the induction assumption), and we have to show that
$\hat{w}(u,\om,\tau,\la)\in C^{k+1}_{2\pi}$ (induction assertion).
It holds
$$
\hat{w}(u,\om,\tau,\la)=F(\hat{w}(u,\om,\tau,\la),u,\om,\tau,\la) \mbox{ for all } (u,\om,\tau,\la) \in  \UU_\eps\times\PP_\eps,
$$
where the map $F:\W \times   \UU_\eps\times\PP_\eps\to \W$ is defined by
\begin{eqnarray*}
  &&F(w,u,\om,\tau,\la)\\
  &&:=\CC(\om,\la)w+\DD(\om,\la)\B(u+w,\om,\tau,\la)+(I-P)(I-\CC(\om,\la))u-P\DD(\om,\la)\B(u+w,\om,\tau,\la).
\end{eqnarray*}
Hence, we have to show that
\beq
\label{Fprop}
F(w,u,\om,\tau,\la) \in C^{k+1}_{2\pi} \mbox{ for all } w \in \W^{k+1}\cap  C^k_{2\pi}
\mbox{ and } (u,\om,\tau,\la) \in  \UU_\eps\times\PP_\eps.
\ee
Obviously, for all $w \in C_{2\pi}$ and $ (u,\om,\tau,\la) \in  \UU_\eps\times\PP_\eps$ it holds $(I-P)(I-\CC(\om,\la))u \in  C^{l}_{2\pi}$
and $P\DD(\om,\la)\B(u+w,\om,\tau,\la) \in  C^{l}_{2\pi}$ for any $l \in \N$.
Hence, in order to prove \reff{Fprop},  it remains to show that
$$
\CC(\om,\la)w,\DD(\om,\la)\B(u+w,\om,\tau,\la)  \in C^{k+1}_{2\pi} \mbox{ for all } w \in \W^{k+1}\cap  C^k_{2\pi}
\mbox{ and } (u,\om,\tau,\la) \in  \UU_\eps\times\PP_\eps,
$$
or, the same, that
\beq
\label{Fprop1}
\left.
  \begin{array}{r}
    \partial_t[\CC(\om,\la)w],\partial_x[\CC(\om,\la)w],
\partial_t[\DD(\om,\la)\B(u+w,\om,\tau,\la)], \partial_x[\DD(\om,\la)\B(u+w,\om,\tau,\la)]
    \in C^{k}_{2\pi}\\
    \mbox{for all } w \in \W^{k+1}\cap  C^k_{2\pi}.
  \end{array}
  \right\}
  \ee
  Due to the definitions of $\B, \CC$ and $\DD$, given by the formulas
  \reff{BBdef}, \reff{Cdef} and \reff{Ddef}, it holds
  \begin{eqnarray*}
    &&\partial_t[\CC(\om,\la)w]=\CC(\om,\la)\partial_tw,\; \partial_t[\DD(\om,\la)w]=\DD(\om,\la)\partial_tw,\\
    &&\partial_x[\CC(\om,\la)w]=\widetilde{\CC}(\om,\la)w+\widehat{\CC}(\om,\la)\partial_tw,
\partial_x[\DD(\om,\la)w]=\widetilde{\DD}(\om,\la)w+\widehat{\DD}(\om,\la)\partial_tw\\
    &&\partial_t[\B(v,\om,\tau,\la)]=\partial_v\B(v,\om,\tau,\la)\partial_tv,
  \end{eqnarray*}
  where
  $$
  [\widetilde{\CC}(\om,\la)w](t,x):=
  \left[
  \begin{array}{c}
    -\partial_xc_1(x,0,\la)w_2(t+\om A(x,0,\la),0)\\
    \partial_xc_2(x,1,\la)w_1(t-\om A(x,1,\la),1)
  \end{array}
\right],
$$
 $$
  [\widehat{\CC}(\om,\la)w](t,x):=
  \left[
  \begin{array}{c}
    -\om\partial_xA(x,0,\la)c_1(x,0,\la)w_2(t+\om A(x,0,\la),0)\\
    -\om\partial_xA(x,1,\la)c_2(x,1,\la)w_1(t-\om A(x,1,\la),1)
  \end{array}
\right],
$$    
  $$
  [\widetilde{\DD}(\om,\la)w](t,x):=
  \left[
  \begin{array}{c}
    \displaystyle-\frac{w_1(t,x)}{a(x,\la)}-\int_0^x\frac{\partial_xc_1(x,\xi,\la)}{a(\xi,\la)}w_1(t+\om A(x,\xi,\la),0)d\xi\\
     \displaystyle-\frac{w_2(t,x)}{a(x,\la)}+\int_x^1\frac{\partial_xc_2(x,\xi,\la)}{a(\xi,\la)}w_2(t-\om A(x,\xi,\la),0)d\xi
  \end{array}
\right],
$$
 $$
  [\widehat{\DD}(\om,\la)w](t,x):=
\left[
  \begin{array}{c}
    \displaystyle-\frac{\om}{a(x,\la)}\int_0^x\frac{c_1(x,\xi,\la)}{a(\xi,\la)}
    %\partial_xA(x,\xi,\la)
    w_1(t+\om A(x,\xi,\la),0)d\xi\\
      \displaystyle-\frac{\om}{a(x,\la)}\int_x^1\frac{c_2(x,\xi,\la)}{a(\xi,\la)}
      %\partial_xA(x,\xi,\la)
      w_2(t-\om A(x,\xi,\la),0)d\xi
  \end{array}
\right].
$$
Hence, \reff{Fprop1} is true.

Now, let us prove \reff{wsmooth1}. Again, we use induction with respect to $k$. For  $k=0$, condition \reff{wsmooth1} is true 
due to Claim \ref{4.14}, again.
For the induction step, we proceed as above. We have to show that the map
\beq
\label{wsmooth2}
(u,\om,\tau,\la) \in  \UU_\eps\times\PP_\eps \mapsto \hat{w}(u,\om,\tau,\la) \in C^{k+1}_{2\pi} \mbox{ is $C^\infty$-smooth}
\ee
under the assumption that 
\beq
\label{wsmooth3}
(u,\om,\tau,\la) \in  \UU_\eps\times\PP_\eps \mapsto \hat{w}(u,\om,\tau,\la) \in \W^{k+1}\cap C^{k}_{2\pi} \mbox{ is $C^\infty$-smooth}.
\ee
Thanks to \reff{wsmooth3},
the maps 
\begin{eqnarray*}
  &&(u,\om,\tau,\la) \mapsto \CC(\om,\la)\partial_t\hat{w}(u,\om,\tau,\la) \in C^{k}_{2\pi},\\
   &&(u,\om,\tau,\la) \mapsto \widetilde{\CC}(\om,\la)\hat{w}(u,\om,\tau,\la) \in C^{k}_{2\pi},\\
   &&(u,\om,\tau,\la) \mapsto \widehat{\CC}(\om,\la)\hat{w}(u,\om,\tau,\la) \in C^{k}_{2\pi},\\
  &&(u,\om,\tau,\la) \mapsto \widetilde{\DD}(\om,\la)\B(u+\hat{w}(u,\om,\tau,\la),\om,\tau,\la)\in C^{k}_{2\pi},\\
  &&(u,\om,\tau,\la) \mapsto \widehat{\DD}(\om,\la)\partial_v\B(u+\hat{w}(u,\om,\tau,\la),\om,\tau,\la)
     (\partial_tu+\partial_t\hat{w}(u,\om,\tau,\la)) \in C^{k}_{2\pi}
\end{eqnarray*}
are $C^\infty$-smooth, which implies \reff{wsmooth2} as desired.
  
\begin{rem}
	\label{wprop}
	The uniqueness assertion of Lemma \ref{4.14} and equality \reff{Finv} yield
	\beq
	\label{winv}
	S_\vp\hat{w}(u,\om,\tau,\la)=\hat{w}(S_\vp u,\om,\tau,\la)
	\mbox{ for all } \vp \in \R, u \in \UU_\eps \mbox{ and } (\om,\tau,\la)\in \PP_\eps.
	\ee
	Moreover,    the uniqueness assertion of Lemma \ref{4.14} 
	along with equality $\B(0,\om,\tau,\la)=0$ implies that
	\beq
	\label{wnull}
	\hat{w}(0,\om,\tau,\la)=0
	\mbox{ for all } (\om,\tau,\la)\in \PP_\eps.
	\ee
	Finally, differentiating the identity
	$$
	(I-P)\left((I-\CC(\om,\la))(u+\hat{w}(u,\om,\tau,\la))-\DD(\om,\la)\B(u+\hat{w}(u,\om,\tau,\la)),\om,\tau,\la)\right)=0
	$$
	with respect to $u$ in $u=0$, $\om=1$, $\tau=\tau_0$ and $\la=0$,
	we conclude that $\LL\partial_u\hat{w}(0,1,\tau_0,0)=0$, i.e.,
	$\partial_u\hat{w}(0,1,\tau_0,0) \in \ker \LL \cap \W$, i.e.,
	\beq
	\label{pwnull}
	\partial_u\hat{w}(0,1,\tau_0,0)=0.
	\ee
\end{rem}

\section{The bifurcation equation}
\label{The bifurcation equation}
\setcounter{equation}{0}
In this section we substitute the solution $w=\hat{w}(u,\om,\tau,\la)$ to \reff{ex}  into \reff{in} and
solve the resulting
so-called bifurcation equation
\beq
\label{bifeq}
    P\left((I-\CC(\om,\la))(u+\hat{w}(u,\om,\tau,\la))-\DD(\om,\la)\B(u+\hat{w}(u,\om,\tau,\la)),\om,\tau,\la)\right)=0
    \ee
with respect to $\om \approx 1$ and $\tau \approx \tau_0$ for $u\approx 0$ and $\la \approx 0$.
The definition \reff{Pdef} of the projection $P$ shows that equation \reff{bifeq} is equivalent to
 \beq
\label{bifeq1}
    \langle(I-\CC(\om,\la))(u+\hat{w}(u,\om,\tau,\la))-\DD(\om,\la)\B(u+\hat{w}(u,\om,\tau,\la)),\om,\tau,\la),\A^*\vv_*\rangle=0.
    \ee
    By Lemma \ref{ker},  the variable $u \in \ker \LL$ in \reff{bifeq1} 
    can be replaced by 
    $\xi \in \C$, by using the ansatz
    \beq
    \label{ans}
    u=\mbox{Re}\,(\xi\vv_0)=\xi_1v_0^1-\xi_2v_0^2,\; \xi=\xi_1+i\xi_2,\; \xi_1,\xi_2 \in \R.
    \ee
    We, therefore, get the following equation:
    \begin{eqnarray}
    \label{bifeq2}
    F(\xi,\om,\tau,\la)&:=&
    \langle(I-\CC(\om,\la))(\mbox{Re}\,(\xi\vv_0)+\hat{w}(\mbox{Re}\,(\xi\vv_0),\om,\tau,\la)),\A^*\vv_*\rangle\nonumber\\
    &&-\langle\DD(\om,\la)\B(\mbox{Re}\,(\xi\vv_0)+\hat{w}(\mbox{Re}\,(\xi\vv_0),\om,\tau,\la)),\om,\tau,\la),\A^*\vv_*\rangle=0.
    \end{eqnarray}
    On the account of \reff{vnulldef} and \reff{v*def},  we have $S_\vp\vv_0=e^{i\vp}\vv_0$ and  $S_\vp\vv_*=e^{i\vp}\vv_*$ and, 
hence, $S_\vp \mbox{Re}\,(\xi\vv_0)=\mbox{Re}\,(e^{i\vp}\xi\vv_0)$. Now, \reff{inv1}, \reff{inv2} and \reff{winv}
      yield
      \beq
      \label{Finv1}
      e^{i\vp}F(\xi,\om,\tau,\la)=F(e^{i\vp}\xi,\om,\tau,\la).
      \ee
    Our task is, therefore, reduced  to determining all solutions $\xi \approx 0$, $\om \approx 1$, $\tau \approx \tau_0$ and $\la \approx 0$
      with real non-negative $\xi$. Since from now on $\xi$ is considered to be a real parameter, we redenote it by $\eps$. Equation \reff{bifeq2} then reads
        \begin{eqnarray}
    \label{bifeq3}
    G(\eps,\om,\tau,\la)&:=&
    \langle(I-\CC(\om,\la))(\eps v_0^1+\hat{w}(\eps v_0^1,\om,\tau,\la)),\A^*\vv_*\rangle\nonumber\\
    &&-\langle\DD(\om,\la)\B(\eps v_0^1+\hat{w}(\eps v_0^1,\om,\tau,\la)),\om,\tau,\la),\A^*\vv_*\rangle=0.
        \end{eqnarray}
        From $\B(0,\om,\tau,\la)=0$ and \reff{wnull} it follows that
         $G(0,\om,\tau,\la)\equiv 0$.
        This means that, to  solve \reff{bifeq3} with $\eps>0$, it suffices to solve
        the so-called scaled or restricted bifurcation equation
        \beq
         \label{bifeq4}
         H(\eps,\om,\tau,\la):=\frac{1}{\eps}G(\eps,\om,\tau,\la)=\int_0^1\partial_\eps G(s\eps,\om,\tau,\la)ds=0.
         \ee
      In particular, on the account of \reff{Aid1} and \reff{bifeq3} it holds
         \begin{eqnarray}
\label{k1}
&& H(\eps,1,\tau_0,0)\nonumber\\
&&=
\int_0^1\left\langle\left(\A-\partial_v\B(s\eps v_0^1+\hat{w}(s\eps v_0^1,1,\tau_0,0),1,\tau_0,0\right))  (I+\partial_u\hat{w}(s\eps v_0^1,1,\tau_0,0))v_0^1,\vv_*\right\rangle\, ds,
         \end{eqnarray}
and \reff{wnull} and \reff{pwnull} yield
\beq
\label{k1a}
H(0,\om,\tau_0,0)=
\langle\A\left(I-\CC(\om,0)
               -\DD(\om,0)\partial_v\B(0,\om,\tau_0,0)\right)v_0^1,\vv_*\rangle.
  \ee

 By \reff{wnull}, \reff{pwnull}, \reff{k1} and Lemma \ref{ker} we have $H(0,1,\tau_0,0)=\langle \LL v_0^1,\A^*\vv_*\rangle=0$.
         Hence, in order to  solve \reff{bifeq4} with respect to $\om \approx 1$ and $\tau \approx \tau_0$
         (for  $\eps \approx 0$  and $\la \approx 0$)
         by using the classical implicit function theorem we have to show that
         \beq
         \label{Jac}
         \left.\det \frac{\partial (\mbox{Re}\,H,\mbox{Im}\,H)}{\partial(\om,\tau)}\right|_{\eps=\la=0, \om=1, \tau=\tau_0} \ne 0.
         \ee

         Let us calculate the partial derivatives in the Jacobian in the left-hand side of \reff{Jac}.
        Due to  
\reff{LLdef}, \reff{k1a}
and Lemma  \ref{ker} 
we have
  \begin{eqnarray}
           \partial_\om H(0,1,\tau_0,0)
           &=&\left.\frac{d}{d\om}\langle\A(\om,0)\left(I-\CC(\om,0)
               -\DD(\om,0)\partial_v\B(0,\om,\tau_0,0)\right)v_0^1,\vv_*\rangle\right|_{\om=1}\nonumber\\
           &=&\left.\frac{d}{d\om}\langle\left(\A(\om,0)
              -\J(\om,\tau_0,0)-\KK(0))\right)v_0^1,\vv_*\rangle\right|_{\om=1}\nonumber\\
             & =&\langle\left(\partial_\om\A(1,0)
              -\partial_\om\J(1,\tau_0,0)\right)v_0^1,\vv_*\rangle.
           \label{dom}
  \end{eqnarray}
  Similarly one gets
  \beq
  \label{dtau}
  \partial_\tau H(0,1,\tau_0,0)=-\langle\partial_\tau\J(1,\tau_0,0)v_0^1,\vv_*\rangle.
  \ee
    On the other hand, \reff{J12def} implies that for $k=1,2$
  \begin{eqnarray*}
&&[\partial_\om\J_k(1,\tau_0,0)v](t,x)=-\frac{\tau_0}{2}b_4^0(x)\int_0^x\frac{\d_tv_1(t-\tau_0,\xi)-\d_tv_2(t-\tau_0,\xi)}{a_0(\xi)}d\xi,\\
    &&[\partial_\tau\J_k(1,\tau_0,0)v](t,x)=-\frac{1}{2}b_4^0(x)\int_0^x\frac{\d_tv_1(t-\tau_0,\xi)-\d_tv_2(t-\tau_0,\xi)}{a_0(\xi)}d\xi
  \end{eqnarray*}
Moreover,  \reff{AAdef} yields   $\partial_\om\A(1,0)\bf{\vv_0}=\partial_t \bf\vv_0$.
  Hence, from \reff{vnulldef} it follows that
  $$
  [\partial_\om\A(1,0)v_0^1](t,x)=\mbox{Re}\,\d_t\vv_0(t,x)=
      \mbox{Re}\,\left(e^{it}\left[
      \begin{array}{c}
        -u_0(x)+ia_0(x)u_0'(x)\\
        -u_0(x)-ia_0(x)u_0'(x)
        \end{array}
      \right]\right)
    $$
    and, for $k=1,2$, that
 \begin{eqnarray*}
   [\partial_\om\J_k(1,\tau_0,0)v_0^1](t,x)&=&\mbox{Re}\,[\partial_\om\J_k(1,\tau_0,0)\vv_0](t,x)=
   -\frac{\tau_0}{2}b_4^0(x)\,\mbox{Re}\,\left(ie^{it}\int_0^x\frac{v_{01}(\xi)-v_{02}(\xi)}{a_0(\xi)}d\xi\right)\\
   &=&\tau_0b_4^0(x)\,\mbox{Im}\left(e^{i(t-\tau_0)}u_0(x)\right),
      \end{eqnarray*}
      where
      \beq
      \label{comp}
      v_{01}(x):=iu_0+a_0u_0',\; v_{02}(x):=iu_0-a_0u_0'
      \ee
      are the components of the vector function $v_0$ (cf. \reff{vnulldef}).
      Analogously one gets
      $
    [\partial_\tau\J_k(1,\tau_0,0)v_0^1](t,x)=b_4^0(x)\,\mbox{Im}\left(e^{i(t-\tau_0)}u_0(x)\right).
 $
 We insert this into \reff{dom} and \reff{dtau} and get
 % (analogously to \reff{int1})
 \begin{eqnarray*}
   &&\partial_\om H(0,1,\tau_0,0)\\
   &&=\frac{1}{2\pi}\int_0^{2\pi}\int_0^1\left(\mbox{Re}\,\left(e^{it}\left[
      \begin{array}{c}
        -u_0+ia_0u_0'\\
        -u_0-ia_0u_0'
        \end{array}
   \right]\right)-\tau_0b_4^0\mbox{Im}\left(e^{i(t-\tau_0)}
   \left[\begin{array}{c}
           u_0\\u_0
           \end{array}
           \right]\right)\right)\cdot \left(e^{it}\left[
      \begin{array}{c}
        u_*+iU_*\\
        u_*-iU_*
      \end{array}
   \right]\right)\,dxdt\\
   &&=\frac{1}{2}\int_0^1
      \left[
      \begin{array}{c}
        (-1+i\tau_0b_4^0e^{-i\tau_0}))u_0+ia_0u_0'\\
        (-1-i\tau_0b_4^0e^{-i\tau_0}))u_0-ia_0u_0'
      \end{array}
   \right]\cdot\left[
      \begin{array}{c}
        u_*+iU_*\\
        u_*-iU_*
      \end{array}
   \right]\,dx\\
   &&=\int_0^1\left((-1+i\tau_0b_4^0e^{-i\tau_0})u_0\overline{u_*}+a_0u_0'\overline{U_*}\right)\,dx
   =\int_0^1\left((-1+i\tau_0b_4^0e^{-i\tau_0})\overline{u_*}-(a_0\overline{U_*})'\right)u_0\,dx.
 \end{eqnarray*}
 Here we used \reff{adbc}. By \reff{s3} and \reff{transvid}, it holds
 \beq
 \label{dom1}
\partial_\om H(0,1,\tau_0,0)=\int_0^1\left(-2+i\tau_0b_4^0(x)e^{-i\tau_0}-ib_5^0(x)\right)u_0(x)\overline{u_*(x)}dx=i.
  \ee
  Similarly,
  \begin{eqnarray}
   \mbox{Re}\,\partial_\tau H(0,1,\tau_0,0)
  & =&-\frac{1}{2\pi}\mbox{Re}\int_0^{2\pi}\int_0^1b_4^0\,\mbox{Im}\left(e^{i(t-\tau_0)}
   \left[\begin{array}{c}
           u_0\\u_0
           \end{array}
           \right]\right)\cdot \left(e^{it}\left[
      \begin{array}{c}
        u_*+iU_*\\
        u_*-iU_*
      \end{array}
   \right]\right)dxdt\nonumber\\
 &=&\mbox{Re}\left(ie^{-i\tau_0}\int_0^1b_4^0(x)u_0(x)\overline{u_*(x)}dx\right)=\rho.
 \label{dtau1}
  \end{eqnarray}
  Hence, assumption {\bf(A2)} implies that
  $$
  \det\left[
    \begin{array}{cc}
      \mbox{Re}\,\partial_\om H(0,1,\tau_0,0) & \mbox{Im}\,\partial_\om H(0,1,\tau_0,0\\
      \mbox{Re}\,\partial_\tau H(0,1,\tau_0,0) & \mbox{Im}\,\partial_\tau H(0,1,\tau_0,0
    \end{array}
  \right]=
  - \mbox{Re}\,\partial_\tau H(0,1,\tau_0,0)=-\rho \ne 0,
  $$
i.e., \reff{Jac} is true.

  Now, the classical implicit function theorem can be applied to solve \reff{bifeq4} with respect to $\om \approx 1$ and $\tau \approx \tau_0$  for $\eps \approx 0$  
  and  $\la \approx 0$. We, therefore, conclude that
there exist $\eps_0>0$ and $C^\infty$-smooth functions $\hat{\om},\hat{\tau}:[-\eps_0,\eps_0]^2\to \R$ with
  $\hat{\om}(0,0)=1$ and $\hat{\tau}(0,0)=\tau_0$ such that $(\eps,\om,\tau,\la) \approx (0,1,\tau_0,0)$ is a solution to \reff{bifeq4}
  if and only if
  \beq
  \label{bifeqsol}
  \om=\hat{\om}(\eps,\la),\quad \tau=\hat{\tau}(\eps,\la).
  \ee
  Moreover, equality \reff{Finv1} implies that $F(-\xi,\om,\tau,\la)=-F(\xi,\om,\tau,\la)$, i.e., $H(-\eps,\om,\tau,\la)=H(\eps,\om,\tau,\la)$.
  Thus, $\hat{\om}(-\eps,\la)=\hat{\om}(\eps,\la)$ and  $\hat{\tau}(-\eps,\la)=\hat{\tau}(\eps,\la)$.
  This yields \reff{bifdir}.
  Now, by \reff{wnull} and \reff{pwnull},
  the corresponding solutions to \reff{FOS}, where $\om$ and $\tau$ are given by \reff{bifeqsol},
  read
  \begin{eqnarray*}
  v=\eps [\hat{v}(\eps,\la)](t,x)&:=&\eps v_0^1(t,x)+[\hat{w}(\eps v_0^1,\hat{\om}(\eps,\la),\hat{\tau}(\eps,\la),\la)](t,x)\\
  &=&\eps\mbox{Re}\left(e^{it}\left[
      \begin{array}{c}
        iu_0(x)+a_0(x)u_0'(x)\\
        iu_0(x)-a_0(x)u_0'(x)
      \end{array}
    \right]\right)+o(\eps),
  \end{eqnarray*}
  where $o(\eps)/\eps \to 0$ for $\eps \to 0$ uniformly with respect to $\la \in [-\eps_0,\eps_0]$. Finally, we take into account
  \reff{udef} and conclude  that the solutions to \reff{problem}, corresponding to  $\om$ and $\tau$,
  are defined by
  $$
  u=\eps [\hat{u}(\eps,\la)](t,x):=\frac{\eps}{2}\int_0^x\frac{[\hat{v}_1(\eps,\la)](t,\xi)-[\hat{v}_2(\eps,\la)](t,\xi)}{a(\xi,\la)}d\xi
  =\eps \mbox{Re}\left(e^{it}u_0(x)\right)+o(\eps),
  $$
  which proves \reff{uas}.

\section{The bifurcation direction}
\label{sec:dir}
\setcounter{equation}{0}
\subsection{Proof of Theorem \ref{thm:dir}}

Differentiating the identity
$
\mbox{Re}\,H(\eps,\hat{\om}(\eps,0),\hat{\tau}(\eps,0),0)\equiv 0
$
two times with respect to $\eps$ at $\eps=0$ and taking into account \reff{bifdir}, \reff{dom1} and \reff{dtau1},   we get
\beq
\label{ka}
\mbox{Re}\,\partial_\eps^2 H(0,1,\tau_0,0)=\rho \partial_\eps^2\hat{\tau}(0,0).
\ee
Furthermore,   
\reff{k1}, \reff{wnull} and \reff{pwnull}  yield the equality
\begin{eqnarray}
\label{pH}
&&\partial^2_\eps H(0,1,\tau_0,0)\nonumber\\
&&=-\langle \partial^3_v\B(0,1,\tau_0,0)(v_0^1,v_0^1,v_0^1)+2\partial^2_v\B(0,1,\tau_0,0)(v_0^1,
\partial^2_u\hat{w}(0,1,\tau_0,0)(v_0^1,v_0^1)),\vv_*\rangle.
\end{eqnarray}

Now we  use the special structure \reff{rest}, \reff{rest1} of the nonlinearity $b$ as it is assumed in Theorem~\ref{thm:dir}.
It follows from \reff{rest}, \reff{rest1}, \reff{Bdef} and \reff{BBdef} that $\partial^2_v\B(0,1,\tau_0,0)=0$. Moreover, for $j=1,2$, it holds
\begin{eqnarray*}
  &&[\partial^3_v\B_j(0,1,\tau_0,0)(v_0^1,v_0^1,v_0^1)](t,x)=[\partial^3_vB(0,1,\tau_0,0)(v_0^1,v_0^1,v_0^1)](t,x)\\
  &&=\beta_1^0(x)[J_0v_0^1](t,x)^3+\beta_2^0(x)[J_0v_0^1](t-\tau_0,x)^3+\beta_3^0(x)[Kv_0^1](t,x)^3+\beta_4^0(x)[K_0v_0^1](t,x)^3.
     \end{eqnarray*}
 Furthermore, \reff{Jdef}, \reff{Kdef}, \reff{vnulldef} and \reff{comp} yield    
\begin{eqnarray*}
  [J_0v_0^1](t,x)&=&\mbox{Re}\,[J_0\vv_0](t,x)=\frac{1}{2}\mbox{Re}\left(e^{it}\int_0^x\frac{v_{01}(\xi)-v_{02}(\xi)}{a_0(\xi)}d\xi\right)=
   \mbox{Re}\,(e^{it}u_0(x)),\\  
  \mbox{[$Kv_0^1$]}(t,x)&=&\mbox{Re}\,[K\vv_0](t,x)=\frac{1}{2}\mbox{Re}\,(e^{it}(v_{01}(x)+v_{02}(x)))=-\mbox{Im}\,(e^{it}u_0(x)),\\
  \mbox{[$K_0v_0^1$]}(t,x)&=&\d_x[J_0v_0^1](t,x)=\mbox{Re}\,(e^{it}u'_0(x)). 
     \end{eqnarray*}
     Therefore,
\begin{eqnarray*}     
  &&[\partial^3_vB(0,1,\tau_0,0)(v_0^1,v_0^1,v_0^1)](t,x)\\
  &&=\frac{1}{8}\left(\beta_1^0(x)(e^{it}u_0(x)+e^{-it}\overline{u_0(x)})^3
     +\beta_2^0(x)(e^{i(t-\tau_0)}u_0(x)+e^{-i(t-\tau_0)}\overline{u_0(x)})^3\right.\\
  &&\left.
     \;\;\;\;\;+i\beta_3^0(x)(e^{it}u_0(x)-e^{-it}\overline{u_0(x)})^3+\beta_4^0(x)(e^{it}u'_0(x)+e^{-it}\overline{u'_0(x)})^3\right)\\
  &&=\frac{1}{8}e^{3it}\left((\beta_1^0(x)+\beta_2^0(x)e^{-3\tau_0}+i\beta_3^0(x))u_0(x)^3+\beta_4^0(x)u_0'(x)^3\right)\\
  &&\;\;\;\;\;+\frac{3}{8}e^{it}\left((\beta_1^0(x)+\beta_2^0(x)e^{-i\tau_0}+i\beta_3^0(x))u_0(x)^2\overline{u_0(x)}+\beta_4^0(x)u_0'(x)^2
\overline{u'_0(x)}
     \right)\\
   &&\;\;\;\;\;+\frac{3}{8}e^{-it}\left((\beta_1^0(x)+\beta_2^0(x)e^{i\tau_0}+i\beta_3^0(x))u_0(x)\overline{u_0(x)}^2+\beta_4^0(x)u_0'(x)
\overline{u'_0(x)}^2
      \right)\\
   &&\;\;\;\;\;+\frac{1}{8}e^{-3it}\left((\beta_1^0(x)+\beta_2^0(x)e^{3i\tau_0}+i\beta_3^0(x))\overline{u_0(x)}^3+\beta_4^0(x)
\overline{u'_0(x)}^3
     \right).
\end{eqnarray*}
Inserting this into \reff{ka} and \reff{pH}, we end up with the equality
\begin{eqnarray*}   
&&\partial_\eps^2\hat{\tau}(0,0)=\frac{1}{\rho}\mbox{Re}\,\partial_\eps^2 H(0,1,\tau_0,0)
  =-\frac{1}{\rho}\langle \partial^3_v\B(0,1,\tau_0,0)(v_0^1,v_0^1,v_0^1),v^1_*\rangle\\
  &&=\frac{1}{2\pi\rho}\int_0^{2\pi}
     \int_0^1
     \left[
     \begin{array}{c}
       [\partial^3_vB(0,1,\tau_0,0)(v_0^1,v_0^1,v_0^1)](t,x)\\
       \displaystyle [\partial^3_vB(0,1,\tau_0,0)](v_0^1,v_0^1,v_0^1)(t,x)
     \end{array}
  \right]
  \cdot\mbox{Re}\left(e^{it}
  \left[
  \begin{array}{c}
       u_*(x)+iU_*(x)\\
         u_*(x)-iU_*(x)
     \end{array}
  \right]
  \right)dxdt\\
  &&=\frac{1}{2\pi\rho}\mbox{Re}\int_0^{2\pi}
     \int_0^1[\partial^3_vB(0,1,\tau_0,0)(v_0^1,v_0^1,v_0^1)](t,x)e^{-it}\overline{u_*(x)}dxdt\\
     &&=\frac{3}{8\rho} \mbox{Re}\left(\int_0^1\left(
    (\beta^0_1(x)+\beta^0_2(x)e^{-i\tau_0} +i\beta^0_3(x))|u_0(x)|^2u_0(x)+\beta_4(x)
    |u'_0(x)|^2u'_0(x)\right)\overline{u_*(x)}dx\right).
\end{eqnarray*}
This is exactly the desired formula in Theorem \ref{thm:dir} with $\sigma=1$ (cf. \reff{transvid}).

\subsection{Example}
	Let us consider problem  \reff{eq:1.1}, \reff{eq:1.2} with
	$$
\tau_0= \frac{\pi}{2}, \; 	a_0(x)=\frac{4}{\pi^2}, \; b_3^0(x)=b_6^0(x)=0,\;  b_4^0(x)=b_5^0(x)=c(x) \mbox{ for all }  x \in [0,1]
	$$
	and  a smooth function $c:[0,1] \to \R$.
	The function $u(x)= \sin\frac{\pi x}{2}$ then solves \reff{evp}  with $\mu=i$ and  \reff{ad}, and
	the choice $u_0(x)=u_*(x)=\sin\frac{\pi x}{2}$ gives
	$$
	\sigma=-\int_0^1c(x) \left(\sin\frac{\pi x}{2}\right)^2 dx+i\int_0^1\left(2-i\frac{\pi}{2}e^{-i\frac{\pi}{2}}c(x)\right)\left(\sin\frac{\pi x}{2}\right)^2 dx,\;
	\rho=-\frac{1}{|\sigma|^2}\int_0^1c(x)\left(\sin\frac{\pi x}{2}\right)^2 dx.
	$$
	Hence, if $\int_0^1c(x)\sin\left(\frac{\pi x}{2}\right)^2 dx\not=0$, then all assumptions of Theorem \ref{thm:hopf} are satisfied.
	If, additionally,  $\beta_4^0(x)=0$ for all $x \in [0,1]$, then
	\begin{eqnarray*}
		\frac{8}{3}|\sigma|^2\rho \,\partial_\eps\hat{\tau}(0,0)&=&
		-\int_0^1c(x)\left(\sin\frac{\pi x}{2}\right)^2 dx\int_0^1\beta_1^0(x)\left(\sin\frac{\pi x}{2}\right)^4 dx\\
		&&+\int_0^1\left(2-\frac{\pi}{2}c(x)\right)\left(\sin\frac{\pi x}{2}\right)^2 dx
		\int_0^1(\beta_3^0(x)-\beta_2^0(x))\left(\sin\frac{\pi x}{2}\right)^4 dx.
	\end{eqnarray*}
	Therefore, if this number is positive, then the Hopf bifurcation is supercritical.

\section{Other boundary conditions}
\label{other}
\renewcommand{\theequation}{{\thesection}.\arabic{equation}}
\setcounter{equation}{0}
The results of Theorems \ref{thm:hopf} and \ref{thm:dir} can be 
extended to  other 
than \reff{eq:1.2} 
boundary conditions,
for example,  for two Dirichlet, or two Robin (in particular,  Neumann), or for periodic boundary conditions.
%boundary conditions.
However, in those cases the transformation \reff{vdef} 
is not appropriate anymore. 
Instead of \reff{vdef}, the following 
transformation can be used:
\beq
\label{vdef1}
\left.
\begin{array}{rcl}
  v_1(t,x)&=&\om (\d_tu(t,x)-u(t,x))+a(x,\la) \d_xu(t,x),\\
  \;v_2(t,x)&=&\om (\d_tu(t,x)-u(t,x))-a(x,\la) \d_xu(t,x).
\end{array}
\right\}
\ee
The inverse transformation  is then given by
\beq
\label{udef1}
u(t,x)=\frac{e^t}{2\om}\left(\int_0^te^{-s}(v_1(s,x)+v_2(s,x))ds-\frac{1}{1-e^{-2\pi}}\int_0^{2\pi}e^{-s}(v_1(s,x)+v_2(s,x))ds\right).
\ee
More precisely, if $u \in C^2_{2\pi}(\R\times[0,1])$ satisfies the second order differential equation in \reff{problem}
and if  $v \in C^1_{2\pi}(\R\times[0,1];\R^2)$ is defined by \reff{vdef1}, then
$v$ satisfies the first order system
\beq
\label{FOS2}
\left.
  \begin{array}{l}
    \om\d_tv_1(t,x)-a(x,\la)\d_xv_1(t,x)+\om v_2(t,x)=[B(v,\om,\tau,\la)](t,x)\\
    \om\d_tv_2(t,x)+a(x,\la)\d_xv_2(t,x)+\om v_1(t,x)=[B(v,\om,\tau,\la)](t,x),
  \end{array}
\right\}
\ee
with
$$
\begin{array}{rcl}
  [B(v,\om,\tau,\la)](t,x)&:=&\displaystyle b\left(x,\la,[J v](t,x)/\om,[J v](t-\om\tau,x)/\om, [(J+K)v](t,x),[K_\la v](t,x)\right)\nonumber\\ [2mm]
                          && \displaystyle  -\frac{1}{2}\d_xa(x,\la)(v_1(t,x)-v_2(t,x)),\\ [3mm]
  \mbox{$[J v]$}(t,x)&:=&\displaystyle \frac{e^t}{2}\left(\int_0^te^{-s}(v_1(s,x)+v_2(s,x))ds-\frac{1}{1-e^{-2\pi}}
                          \int_0^{2\pi}e^{-s}(v_1(s,x)+v_2(s,x))ds\right)
\end{array}
$$
and with operators $K$ and $K_\la$ defined in \reff{Kdef}.
And vice versa, if  $v \in C^1_{2\pi}(\R\times[0,1];\R^2)$ satisfies \reff{FOS2} and if  $u \in C^1_{2\pi}(\R\times[0,1])$,
is defined by \reff{udef1}, then  $u$ is $C^2$-smooth and  satisfies the differential equation in \reff{problem}.
Note that till now  no boundary conditions were used, but
  only the periodicity in time.

Now, for definiteness,   suppose that
 $u$ satisfies the  Dirichlet boundary conditions
\beq
\label{ubc}
u(t,0)=u(t,1)=0.
\ee
Then, accordingly to \reff{vdef1}, the function $v$ satisfies 
\beq
\label{vbc}
v_1(t,0)+v_2(t,0)=v_1(t,1)+v_2(t,1)=0.
\ee
In this case
the system \reff{IE} of partial integral equations reads
%must be replaced by
\beq
\label{IE1}
\left.
  \begin{array}{l}
    v_1(t,x)+c_1(x,0,\la)v_2(t+\om A(x,0,\la),0)\\
    =\displaystyle-\int_0^x\frac{c_1(x,\xi,\la)}
                {a(\xi,\la)}[\B_1(v,\om,\tau,\la)](t+\om A(x,\xi,\la),\xi)d\xi,\\
    v_2(t,x)+c_2(x,1,\la)v_1(t-\om A(x,1,\la),1)\\
    =\displaystyle\int_x^1\frac{c_2(x,\xi,\la)}
                {a(\xi,\la)}[\B_2(v,\om,\tau,\la)](t-\om A(x,\xi,\la),\xi)d\xi,
  \end{array}
\right\}
\ee
where the  operator $\B$ is now defined by
\beq
\label{Bdef1}
\left.
\begin{array}{rcl}
    [\B_1(v,\om,\tau,\la)](t,x)&:=&[B(v,\om,\tau,\la)](t,x)-b_1(x,\la)v_1(t,x)-\om v_2(t,x),\\
 \displaystyle[\B_2(v,\om,\tau,\la)](t,x)&:=&[B(v,\om,\tau,\la)](t,x)-b_2(x,\la)v_2(t,x)-\om v_1(t,x),
\end{array}
\right\}
\ee
while the functions $b_1,b_2,c_1,c_2$ and $A$ are introduced in Sections \ref{sec:FOS} and  \ref{sec:Inteq}.

More exactly, if $v \in  C_\pi(\R\times[0,1];\R^2)$  satisfies \reff{FOS2} and if $\d_tv$ exists and is continuous, then the function $u$, defined by \reff{udef1},
is $C^2$-smooth and satisfies the differential equation in \reff{problem} and the boundary condition \reff{ubc}.

The system \reff{IE1} can be written, again, as an operator equation of the type \reff{abstract} with operators
$\CC_1$ and $\DD$ as in Section \ref{Lyapunov-Schmidt Procedure}, with $\CC_2$ slightly changed (cf. \reff{Cdef}) to 
$$
[\CC_2(\om,\la)v](x,t)=-c_2(x,1,\la)v_1(t-\om A(x,1,\la),1)
$$
and with operator $\B$ from \reff{Bdef1}.

Now we can proceed as in Sections \ref{Lyapunov-Schmidt Procedure}-\ref{sec:dir}.
Specifically, the linearization of operator $\B$ in $v=0$ is, again,
a sum of a partial integral operator and a pointwise operator with zero diagonal part
(cf. \reff{diffB}):
$$
\partial_v\B(0,\om,\tau,\la)=\J(\om,\tau,\la) + \KK(\om,\la)
$$
with, for $k=1,2$,
$$
[\J_k(\om,\tau,\la)v](t,x)=
\left(\frac{b_3(x,\la)}{\om}+b_5(x,\la)\right)[J v](t,x)+\frac{b_4(x,\la)}{\om} [J v](t-\om\tau,x)
  $$
(with functions $b_3,b_4,b_5$ from \reff{bdef1})  and
  $$
 \displaystyle [\KK_1(\om,\la)v](t,x)=(b_2(x,\la)-\om)v_2(t,x),\;  \displaystyle [\KK_2(\om,\la)v](t,x)=(b_1(x,\la)-\om)v_1(t,x).
 $$
The definition \reff{vnulldef} of the function $v_0$ has to be changed to
$$
v_0(x):=
\left[
  \begin{array}{c}
    (i-1)u_0(x)+a_0(x)u'_0(x)\\
    (i-1)u_0(x)-a_0(x)u'_0(x)
  \end{array}
\right],
$$
and similarly for  $\vv_0$, $v_0^1$ and $v_0^2$. The definitions \reff{v*def} of $v_*$, $\vv_*$, $v_*^1$ and $v_*^2$
stay the same. The functions  $v_*^1$ and $v_*^2$ satisfy the boundary conditions \reff{vbc} because 
$$
v_{*1}(x)+v_{*2}(x)=2u_*(x)=0 \mbox{ in } x=0,1.
$$
Here $u_0$ and $u_*$ are eigenfunctions to the eigenvalue problems \reff{evp} (with $\mu=i$ and $\tau=\tau_0$)
and \reff{ad}, where in both eigenvalue problems the boundary conditions are changed to \reff{ubc}.
With these eigenfunctions, the formulas for $\sigma$ and $\rho$ in {\bf(A3)} and the formula
for $\partial_\eps^2\hat{\tau}(0,0)$
in Theorem \ref{thm:dir} remain unchanged.

\section*{Acknowledgments}
Irina Kmit was supported by the VolkswagenStiftung Project ``From Modeling and Analysis to Approximation''.
Lutz Recke was supported by the DAAD program ``Ostpartnerschaften''.

\end{document}